\begin{document}

\setcounter{secnumdepth}{3}
\setcounter{tocdepth}{2}

\newtheorem{definition}{Definition}[section]
\newtheorem{lemma}[definition]{Lemma}
\newtheorem{sublemma}[definition]{Sublemma}
\newtheorem{corollary}[definition]{Corollary}
\newtheorem{proposition}[definition]{Proposition}
\newtheorem{theorem}[definition]{Theorem}
\newtheorem{fact}[definition]{Fact}
\newtheorem{question}[definition]{Question}
\newtheorem{remark}[definition]{Remark}
\newtheorem{example}[definition]{Example}
\newtheorem{assumption}[definition]{Assumption}

\newcommand{\cov}{\mathrm{covol}}
\def \tr{{\mathrm{tr}}}
\def \det{{\mathrm{det}\;}}
\def\co{\colon\tanhinspace}
\def\I{{\mathcal I}}
\def\N{{\mathbb N}}
\def\R{{\mathbb R}}
\def\Z{{\mathbb Z}}
\def\Sph{{\mathbb S}}
\def\Tor{{\mathbb T}}
\def\Disk{{\mathbb D}}
\def\Hess{\mathrm{Hess}}
\def\rad{\mathbf{v}}

\def\H{{\mathbb H}}
\def\RP{{\mathbb R}{\mathrm{P}}}
\def\dS{{\mathrm d}{\mathbb{S}}}
\def\Isom{\mathrm{Isom}}

\def\pr{\mathrm{pr}}

\def\sh{\mathrm{sinh}\,}
\def\ch{\mathrm{cosh}\,}
\newcommand{\arccosh}{\mathop{\mathrm{arccosh}}\nolimits}
\newcommand{\oh}{\overline{h}}

\newcommand{\mf}{\mathfrak}
\newcommand{\mb}{\mathbb}
\newcommand{\ol}{\overline}
\newcommand{\la}{\langle}
\newcommand{\ra}{\rangle}
\newcommand{\hess}{\mathrm{Hess}\;}
\newcommand{\grad}{\mathrm{grad}}
\newcommand{\M}{\mathrm{MA}}
\newcommand{\II}{\textsc{I\hspace{-0.05 cm}I}}
\renewcommand{\d}{\mathrm{d}}
\newcommand{\A}{\mathrm{A}}
\renewcommand{\L}{\mathcal{L}}
\newcommand{\FB}[1]{{\color{red}#1}}
\newcommand{\note}[1]{{\color{blue}{\small #1}}}
\newcommand{\Area}{\mathrm{Area}}

%%%%%%%%%%%%%%%%%%%%%%%%%%%%%%%%%%%
%%%%%%%% gere espace texte formule
\setlength{\abovedisplayshortskip}{1pt}
\setlength{\belowdisplayshortskip}{3pt}
\setlength{\abovedisplayskip}{3pt}
\setlength{\belowdisplayskip}{3pt}

%%%%%%%%%%%%%%%%%%%%%%%%%%%%%%%%%%

\title[Embeddings in flat Lorentzian manifolds]{Embeddings of non-positively curved compact surfaces in flat Lorentzian manifolds}

\author{Fran\c{c}ois Fillastre, Dmitriy Slutskiy}

%\date{}
\date{ \today \\ V2}

\begin{abstract}
We prove that any metric of non-positive curvature in the sense of Alexandrov on a compact surface can be isometrically embedded as a convex spacelike Cauchy surface in a flat  spacetime of dimension (2+1).
The proof follows from polyhedral approximation.
\end{abstract}

\maketitle

\textbf{Keywords} Alexandrov surfaces, convex surfaces, Minkowski space, Teichm\"uller space.

\textbf{Subject Classification (2010)} 	53C45, 53B30  51M09.

\setcounter{tocdepth}{3}
\tableofcontents

\section{Introduction}

In the 1940's, A.D. Alexandrov, looking at the induced (intrinsic) distances on the boundary of convex bodies of the Euclidean space, introduced a class of distances on compact surfaces. Nowadays, such distances are called \emph{metrics of non-negative curvature (in the sense of Alexandrov)}. He then proved the following famous result \cite{alex}. We assume that all the surfaces we are considering are closed, oriented and connected.

\begin{theorem}\label{thm:alex}
Let $(S,d)$ be a metric of non-negative curvature
 on a compact surface.
 Then there exists a flat  Riemannian manifold $R$ homeomorphic to $S\times \R$
 which contains a  convex surface whose induced distance is isometric to $(S,d)$.
\end{theorem}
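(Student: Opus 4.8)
The plan is to reduce everything to a \emph{polyhedral} version of the statement, in the spirit announced in the abstract. First I would pin down the topology of $S$. By the Gauss--Bonnet formula for surfaces of bounded curvature, the total curvature of $(S,d)$ equals $2\pi\chi(S)$; since the curvature measure is non-negative, $\chi(S)\ge 0$, so $S$ is homeomorphic to $\mathbb{S}^2$ or to $\mathbb{T}^2$. In the toral case the same formula forces the curvature to vanish identically, so $(S,d)$ is a flat torus $\mathbb{R}^2/\Lambda$; then $R=\mathbb{R}^3/\Lambda$, with $\Lambda$ acting by translations of a horizontal plane, is a flat manifold homeomorphic to $S\times\mathbb{R}$ that contains the totally geodesic --- hence (weakly) convex --- flat torus $(\mathbb{R}^2\times\{0\})/\Lambda$, isometric to $(S,d)$. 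So from now on $S=\mathbb{S}^2$.

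Next I would establish the polyhedral case: if $(S,d)$ is flat outside finitely many cone points, each of cone angle $<2\pi$, then it is isometric to the boundary of a (possibly degenerate) convex polytope in $\mathbb{R}^3$. Fixing the number $n$ of cone points and the list of their curvatures, consider the ``intrinsic metric'' map $\Phi$ from the moduli space of convex polytopes in $\mathbb{R}^3$ with at most $n$ marked vertices to the moduli space $\mathcal{M}_n$ of such polyhedral metrics on $\mathbb{S}^2$. One shows that $\Phi$ is continuous; that it is proper, i.e.\ a sequence of polytopes whose induced metrics converge cannot collapse (here one uses an upper bound on diameters and the lower bound on the cone angles/curvatures); and that it is a local homeomorphism onto its image --- this last point is precisely the infinitesimal rigidity of convex polytopes. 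Since $\mathcal{M}_n$ is connected and meets the image of $\Phi$, the map $\Phi$ is onto, which gives the realization.

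Then I would treat an arbitrary non-negatively curved metric on $\mathbb{S}^2$ by approximation. Approximate $(S,d)$, in the Gromov--Hausdorff (or Lipschitz) topology, by polyhedral metrics $d_k$ of non-negative curvature (for instance by triangulating finely and replacing each triangle by its Euclidean comparison triangle, checking that the angle defects keep the right sign). Realize $d_k$ as $\partial P_k$ with $P_k\subset\mathbb{R}^3$ convex; the $P_k$ have uniformly bounded diameters and do not degenerate in an uncontrolled way, so by the Blaschke selection theorem a subsequence converges to a convex body $K$. Uniform convergence of the boundary surfaces, together with the standard comparison between the intrinsic and the ambient distance on a convex surface, shows that the metric induced on $\partial K$ is the limit of the $d_k$, hence isometric to $(S,d)$. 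Finally set $R=\mathbb{R}^3\setminus\{O\}$ for an interior point $O$ of $K$ (in the degenerate case, where $K$ is a planar convex region, take any $O$ off that plane, or argue by a limiting/doubling argument): $R$ is flat, homeomorphic to $\mathbb{S}^2\times\mathbb{R}$, and $\partial K\subset R$ is a convex surface with induced distance $(S,d)$.

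The main obstacle is the polyhedral realization step, and within it the infinitesimal rigidity of convex polytopes (Cauchy--Dehn--Alexandrov): it is exactly what makes $\Phi$ open and thus lets the deformation argument close up onto all of $\mathcal{M}_n$. The other delicate ingredients are properness --- ruling out degeneration of the approximating polytopes in the limit --- and the intrinsic/extrinsic distance comparison needed to identify the limit metric; these are more technical than conceptual once the right compactness framework is fixed.
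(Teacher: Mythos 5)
The paper itself does not re-prove this classical theorem; it cites Alexandrov's monograph, so the comparison is with the classical proof. Your outline is essentially Alexandrov's original argument: reduce by Gauss--Bonnet to $\mathbb{S}^2$ (handling the flat-torus case separately), prove realization for polyhedral metrics via the deformation method (continuity, properness, and openness of the realization map, the last point being infinitesimal rigidity of convex polytopes), and then pass to the general case by polyhedral approximation plus Blaschke selection. This is also the same two-step polyhedral-approximation-plus-compactness scaffolding that the present paper employs for the analogous Lorentzian Theorem~\ref{thm:main 2}, so your route is entirely aligned with both the cited source and the structure of the paper.

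A couple of points in your sketch deserve more care than they receive. First, the deformation step needs, in addition to continuity, properness, and local injectivity/openness, a statement that the two moduli spaces in question are manifolds of the \emph{same} dimension and that the target $\mathcal{M}_n$ is connected; you allude to connectedness but not to the dimension count, and without it the invariance-of-domain argument does not close. Second, the degenerate case is a genuine feature, not a nuisance: the limiting convex body $K$ may well be a planar region, in which case $\partial K$ must be read as the doubly covered polygon (the ``double'' of the region), and the assertion that one can ``take any $O$ off that plane'' needs to be replaced by interpreting the surface as that double and choosing $R$ accordingly. Alexandrov's own treatment handles this case head-on rather than by a vague limiting argument. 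Third, in the approximation step, replacing geodesic triangles by comparison triangles produces a flat metric, but verifying that the cone angles at the new vertices are $\le 2\pi$ (so that the approximant is again non-negatively curved) is a nontrivial lemma; it is the non-negative-curvature counterpart of the angle estimate the paper uses in its Corollary~\ref{thm:approx}, and it should be stated, not merely ``checked''.
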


Actually, if $(S,d)$ is a metric space isometric to the induced distance on a flat torus $(T,h)$, then the statement above is trivial, as $R$ can be taken as $S\times\R$ with the metric $h+\operatorname{d}t^2$. Otherwise, by the Gauss--Bonnet formula, a compact surface  $S$ with a metric of non-negative curvature  must have genus $0$. In this case,  the metric
on $R$ is the one of the Euclidean space minus the origin.
A more classical way to state Theorem~\ref{thm:alex} in this case
is to say that $(S,d)$ is isometric to the induced distance on the boundary of a convex body of the Euclidean space.

In the present paper, we prove an analogous result for metrics of non-positive curvature.

\begin{theorem}\label{thm1}
Let $(S,d)$ be a metric of non-positive curvature (in the sense of Alexandrov) on a compact surface.
 Then there exists a flat  Lorentzian manifold $L$ homeomorphic to $S\times \R$
 which contains a spacelike convex surface whose induced distance is isometric to $(S,d)$.

\end{theorem}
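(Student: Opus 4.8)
The first step is to recall the structure theory of non-positively curved Alexandrov surfaces: by Gauss--Bonnet such a surface $S$ has genus at least $1$, and its curvature is a non-positive measure $\mu$ on $S$ with $\mu(S) = 2\pi\chi(S) \le 0$. A metric of non-positive curvature can be approximated (in the sense of uniform convergence of distance functions, or Gromov--Hausdorff) by \emph{polyhedral} metrics of non-positive curvature, i.e. Euclidean cone metrics all of whose cone angles are $\ge 2\pi$; this is the analogue, in the CAT-world, of the classical fact used by Alexandrov in the convex case that any metric of non-negative curvature on $S^2$ is a limit of convex polyhedral metrics. So I would first establish (or cite) such an approximation lemma: there is a sequence $(S, d_n)$ of polyhedral non-positively curved metrics with $d_n \to d$ uniformly.

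**The second step is the polyhedral realization.** For each polyhedral metric $d_n$ one wants a \emph{convex polyhedral Cauchy surface} in a flat Lorentzian manifold $L_n \cong S \times \R$ whose induced metric is $(S,d_n)$. Here the relevant model is that of \emph{globally hyperbolic maximal Cauchy-compact (GHMC) flat spacetimes}: by the work of Mess (and its generalizations), such a spacetime of genus $\ge 2$ is a quotient of (a domain in) Minkowski space $\R^{2,1}$ by an affine deformation of a Fuchsian group, and it contains canonical convex Cauchy surfaces. The induced metric on a spacelike convex polyhedral surface in such a spacetime is exactly a Euclidean cone metric with all cone angles $\ge 2\pi$; conversely one expects every such polyhedral metric to arise, with the underlying flat conformal/projective structure as a free parameter running over Teichm\"uller space. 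I would therefore invoke (or prove, depending on what is available in the literature — presumably results of Fillastre and collaborators on Fuchsian polyhedra in Minkowski space) the \textbf{polyhedral version of Theorem~\ref{thm1}}: every polyhedral non-positively curved metric on $S$ is realized as a convex spacelike polyhedral Cauchy surface in some GHMC flat spacetime homeomorphic to $S \times \R$. For the torus case one works instead with the flat metric $h + \mathrm{d}t^2$ directly, or with the analogous translation-surface picture.

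**The third step is the compactness/passage to the limit.** Given the realizations $L_n$ with embedded polyhedral surfaces $\Sigma_n$ inducing $d_n$, one must extract a convergent subsequence and show the limit is an embedded convex spacelike Cauchy surface in a flat Lorentzian manifold $L$ with induced metric $(S,d)$. This requires a priori control: one needs the holonomies (the Fuchsian parts, i.e. points of Teichm\"uller space, together with the translation parts) to stay in a compact set, and one needs uniform bounds on the geometry of the surfaces $\Sigma_n$ — e.g. a uniform spacelike-ness bound (uniform bound on the ``tilt'' of tangent planes away from the horizontal), and uniform two-sided bounds keeping $\Sigma_n$ within a compact region of the spacetime. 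These bounds should follow from the fact that the induced metrics $d_n$ converge, hence have uniformly bounded diameter and are uniformly bounded below, combined with the convexity (support-function/Gauss-map estimates), mirroring the Alexandrov-type a priori estimates in the Riemannian/Euclidean setting. Once the limiting spacetime and surface are identified, lower semicontinuity of the curvature measure and the uniform convergence $d_n \to d$ give that the induced metric on the limit surface is isometric to $(S,d)$, and that the surface is convex and spacelike.

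**I expect the main obstacle to be precisely this a priori compactness in Step 3** — ruling out degeneration. In the Lorentzian setting there are two ways things can go wrong that have no Euclidean analogue: the spacelike surfaces can degenerate toward a lightlike (null) position, and the flat structures $L_n$ can degenerate (the Fuchsian holonomy can leave every compact set of Teichm\"uller space, or the translation part can blow up). One must show that neither happens, using only that $(S,d_n)$ converges; this is where the non-positive curvature hypothesis, controlling the intrinsic geometry uniformly, has to be leveraged to control the extrinsic geometry. A secondary technical point is making sure the convergence of surfaces is strong enough to pass convexity and the spacelike condition to the limit (Hausdorff convergence of graphs, plus equicontinuity), and that the limiting object is genuinely a Cauchy surface and not merely an achronal one.
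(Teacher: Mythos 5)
Your outline matches the paper's strategy step for step: polyhedral approximation via the Alexandrov--Zalgaller theorem, realization of the polyhedral metrics as Fuchsian convex polyhedra in Minkowski space (citing Fillastre's earlier result, Theorem~\ref{thm:cas poly} here), and then a compactness argument to pass to the limit. You also correctly identify the two possible degenerations --- surfaces drifting toward the light cone, and holonomies escaping to the boundary of Teichm\"uller space --- as the crux.

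A few remarks on where your sketch leaves work undone or drifts from what the paper actually does. First, a small slip: in the torus case the flat Lorentzian manifold is $S\times\R$ with metric $h-\mathrm{d}t^2$, not $h+\mathrm{d}t^2$. Second, you invoke the general GHMC/Mess framework (affine deformations of Fuchsian groups, i.e.\ nontrivial translation part), whereas the paper deliberately restricts to the strictly Fuchsian case (surfaces in $I^+(0)$, holonomy in $O_0(2,1)$, no translation cocycle). That restriction is not cosmetic: the polyhedral realization theorem used as input is a Fuchsian statement, and the compactness argument is calibrated to that setting --- one works with radial graphs $u_n$ over $\H^2$ and needs uniform two-sided bounds $\alpha<u_n<\beta$, which is precisely the ``keeps $\Sigma_n$ in a compact region'' condition you gesture at. Third and most substantively, the core of the paper is proving those bounds and the Teichm\"uller compactness from the hypothesis that only the \emph{intrinsic} induced distances converge, and you only state that these bounds ``should follow'' from convergence plus convexity. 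The actual mechanism is nontrivial: the lower bound on $u_n$ uses the reversed triangle inequality together with the fact that a cocompact Fuchsian group always has an element of uniformly bounded translation length at any point (else the short spacelike chord illustrated in Figure~1 would make the induced diameter collapse); the Teichm\"uller compactness uses an integral estimate on $-\langle y,y\rangle_-$ along shortest paths (an argument adapted from Schlenker in the de Sitter setting) combined with the fact that $9g-9$ length functions give proper coordinates; and the upper bound on $u_n$ then uses the spacelike condition. None of this is implied by general considerations about ``support-function estimates'' or ``lower semicontinuity of the curvature measure,'' so as written the key Step~3 is a conjecture rather than a proof. Your plan is sound, but it is a plan; the paper's content is in filling exactly the gap you flagged.
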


The definition of metric of non-positive curvature is recalled in Section~\ref{paragraph:construction-of-polyhedral-metrics}.
Once again, if $(S,d)$ is a metric space isometric to the induced distance on a flat torus $(T,h)$,
then the statement above is trivial, as $L$ can be taken as $S\times\R$ with the metric $h-\operatorname{d}t^2$.
 Otherwise, by Gauss--Bonnet formula, $S$ must have genus $g>1$. We will now consider only this case.

Theorem~\ref{thm:alex} had multiple generalizations. One can isometrically embed metric spaces with curvature bounded from below on compact surfaces into constant curvature Riemannian $3$d-spaces. See the introduction of \cite{fiv} for an overview. Also one can try to  isometrically embed metric spaces with curvature bounded from \emph{above} on compact surfaces into constant curvature \emph{Lorentzian} $3$d-spaces. The first results in this direction were proved in \cite{Riv86}, and \cite{RH1993,Sch1996}. Other particular results  are cited below. As far as we know, Theorem~\ref{thm1} is the first result in this direction where no restriction ---apart form the curvature bound, is imposed to the metric (e.g. to come from a smooth Riemannian metric, or to be a  constant curvature metric with conical singularities).

We will state a Theorem~\ref{thm:main 2} below, which will imply Theorem~\ref{thm1}. But let us give some definitions before stating Theorem~\ref{thm:main 2}.

The Minkowski space $\R^{2,1}$  is
$\R^{3}$ endowed with the bilinear form
$$\langle x,y\rangle_-=x_1y_1+x_2y_2-x_{3}y_{3}~.$$
A plane $P$ is \emph{spacelike} if the restriction of
$\langle \cdot,\cdot\rangle_-$ to $P$ is positive definite.
A \emph{spacelike convex set} $K$ of $\R^{2,1}$ is a closed convex set which has only spacelike planes as support planes. This assumption is more restrictive than only asking that $K$ is the intersection of half-spaces bounded by spacelike planes, as shows the closure of
$$I^+(0)=\{x \in \R^3 | \langle x,x\rangle_{-} <0,\,x_3>0\}~.$$
Without loss of generality, we can assume that the set $K$ is \emph{future convex}, i.e. $K$ is the intersection of the future sides of its support planes (the future side of the plane is the one containing the vector $(0,0,1)$).
A \emph{spacelike convex surface}  is the boundary of a (future) convex spacelike set of Minkowski space.

The \emph{induced distance}
on a spacelike convex surface is the distance induced by the length structure given by
\begin{equation}\label{eq:length a}\mathfrak{L}(c)=\int_0^1 \| c'\|_-~,\end{equation}
where, for a spacelike vector $v$,
$$\|v\|_- = \langle v,v\rangle_-^{1/2}~, $$
and $c :[0,1]\rightarrow \partial K$
a Lipschitz curve (with respect to the ambient Euclidean metric of $\R^3$). Note that as the set is convex, there is always a Lipschitz curve between two points on the boundary.

A famous example is $\{x\in I^+(0)| \langle x,x\rangle_-<-1\}$
whose  boundary  is

\begin{equation}\label{def:upper-hyperboloid}
\H^2=\{x\in I^+(0)| \langle x,x\rangle_-=-1\}~.
\end{equation}

If $d_{\H^2}$ is the induced distance
on $\H^2$, then $(\H^2,d_{\H^2})$ is isometric to the distance of the hyperbolic plane. Moreover, this implies a canonical identification between the group of orientation-preserving isometries of the hyperbolic plane and the connected component of the identity  $O_0(1, 2)$ of  $O(1,2)$.

Let us go back to the distance $(S,d)$ of the statement of Theorem~\ref{thm1} and to its universal cover $( \tilde S,\tilde d)$ (see Section~\ref{sec:met on univ cov} for the definition of the distance $\tilde d$).

We want to find a \emph{spacelike convex isometric immersion} of $(\tilde S,\tilde d)$ into $\R^{2,1}$,  i.e.,
a map
$$\phi : \tilde S \to \R^{2,1} $$
such that
\begin{itemize}
\item $\phi(\tilde S)$ is a spacelike convex surface
\item $\phi$ is an isometry between $(\tilde S,\tilde d)$ and
$\phi(\tilde S)$ endowed with the induced distance.
\end{itemize}
Moreover we want $\phi$ to be \emph{equivariant}, that is, there exists a faithful and discrete representation
$$\rho : \pi_1S\to \mathrm{Isom}\, \R^{2,1} $$
such that for all
$\gamma\in \pi_1S$ and for all $x\in \tilde S$,
\begin{equation}\label{eq:equivariance}\phi(\gamma. x)=\rho(\gamma)\phi(x)~, \end{equation}
where the action of $\pi_1S$ onto $\tilde S$ is by deck transformations.

It follows that the distance induced on $\phi(\tilde S)/\rho(\pi_1S)$ is isometric to $(S,d)$.
In the present paper, we will look more precisely for
a spacelike convex isometric immersion which is a \emph{Fuchsian convex isometric immersion}. This means that
\begin{itemize}
\item $\phi(\tilde S)$ is contained in $I^+(0)$ (recall that our convex sets are implicitly future convex),
\item  $\rho(\pi_1S)$
is a Fuchsian subgroup of $O_0(2,1)$.
\end{itemize}
One can easily see that in the  Fuchsian case,
$\phi(\tilde S)$ meets exactly once each future timelike half-line.

Let us give a trivial example. If $(S,d)$ is the distance given by a hyperbolic metric $h$ on $S$, then there is an isometry between
$(\tilde S,\tilde d)$ and $\H^2$ (the developing map of the hyperbolic structure given by $h$) and there
exists an equivariant representation $\rho:\pi_1S\to O_0(2,1)$ (the holonomy)  such that $\H^2/\rho(\pi_1S)$ with its induced distance is isometric to $(S,d)$.

In the present paper, we prove the following theorem.
\begin{theorem}\label{thm:main 2}
Let $(S,d)$ be a metric of non-positive curvature on a compact surface (of genus $\geq 2$).
Then there exists a Fuchsian convex isometric  immersion
of  $(\tilde S,\tilde d)$ into Minkowski space.
\end{theorem}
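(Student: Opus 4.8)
The plan is to prove Theorem~\ref{thm:main 2} by polyhedral approximation, in three stages: (1) approximate the Alexandrov metric $(S,d)$ by polyhedral metrics of non-positive curvature (cone metrics with all cone angles $\geq 2\pi$); (2) solve the embedding problem for each polyhedral metric, obtaining a Fuchsian convex \emph{polyhedral} isometric immersion into $\R^{2,1}$; (3) pass to the limit, showing that the sequence of embeddings (and of holonomy representations) subconverges to a Fuchsian convex isometric immersion of the original metric. The overall strategy mirrors Alexandrov's original proof of Theorem~\ref{thm:alex} in the Euclidean setting, but with the Lorentzian sign and the hyperbolic (genus $\geq 2$) topology replacing the spherical one.

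For stage (1), I would invoke (and in Section~\ref{paragraph:construction-of-polyhedral-metrics} presumably establish) an approximation theorem in the spirit of the classical fact that any metric of non-positive curvature on a surface is a limit, in the sense of uniform convergence of distance functions, of polyhedral metrics of non-positive curvature on the same surface. Concretely, one triangulates $S$ more and more finely and replaces $d$ on each small triangle by the Euclidean comparison triangle; the CAT$(0)$/non-positive curvature condition guarantees (via Gauss--Bonnet on each vertex) that the resulting cone angles are all $\geq 2\pi$, and $d_n \to d$ uniformly. One must check that this convergence lifts to the universal covers, i.e. $(\tilde S, \tilde d_n)\to(\tilde S,\tilde d)$ uniformly on compact sets, which follows from the definition of $\tilde d$ in Section~\ref{sec:met on univ cov} and the compactness of $S$.

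For stage (2) — the genuinely geometric heart — the claim is: given a polyhedral (cone) metric on $S$ with all cone angles $\geq 2\pi$ and genus $\geq 2$, there is a Fuchsian convex polyhedral surface in $I^+(0)\subset\R^{2,1}$, invariant under a Fuchsian group $\rho_n(\pi_1 S)\subset O_0(2,1)$, realizing it. Here I would either cite the existing literature on Fuchsian polyhedral realizations in Minkowski space (the analogue, for cone angles $\geq2\pi$, of Alexandrov-type existence theorems; cf.\ the works mentioned in the introduction) or prove it via a deformation/continuity method: parametrize candidate polyhedral radial graphs over $\H^2/\Gamma$ by their vertex "heights," set up the map from such data to the induced cone metric (cone angles and edge lengths), show this map is a local homeomorphism onto the space of admissible polyhedral metrics by an infinitesimal rigidity statement (the Hessian of an associated concave functional, or a Lorentzian analogue of the Cauchy--Alexandrov rigidity of convex polyhedra), and propagate along a path from a model metric to the target, with properness ensuring no degeneration. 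The condition cone angle $\geq 2\pi$ is exactly what makes a convex (rather than concave) Lorentzian realization possible, and genus $\geq 2$ forces the Fuchsian normalization.

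For stage (3), I would extract a convergent subsequence. The Fuchsian condition pins down a basepoint: $\phi_n(\tilde S)$ meets each future timelike half-line exactly once, so $\phi_n(\tilde S)$ is a radial graph over $\H^2$. Uniform bounds on $d_n$ give uniform bounds on the "heights" of these graphs over any compact set, hence (by convexity and a diagonal argument) $C^0$-subconvergence of $\phi_n$ to a convex map $\phi$ with spacelike (possibly degenerate at the limit — this must be excluded) image; simultaneously the representations $\rho_n:\pi_1 S\to O_0(2,1)$ subconverge — using that the systole is controlled below and diameter above by the $d_n\to d$ convergence — to a Fuchsian representation $\rho$. Then one checks that $\phi$ is $\rho$-equivariant (pass to the limit in~\eqref{eq:equivariance}), that the induced distance on $\phi(\tilde S)$ is $\tilde d$ (lower semicontinuity of length plus the uniform control gives both inequalities), and that $\phi(\tilde S)$ is genuinely spacelike and convex, i.e. its support planes stay uniformly spacelike — this last point is where one uses that the limiting metric has diameter bounded and the graphs do not "tip over" to become lightlike.

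The main obstacle I anticipate is stage (3)'s compactness together with the non-degeneracy of the limit: ensuring that the approximating convex surfaces do not escape to infinity or flatten against the light cone, and that the Fuchsian groups $\rho_n(\pi_1 S)$ do not degenerate (no pinching of a closed geodesic, no loss of discreteness). This requires a priori geometric estimates — an upper bound on the intrinsic diameter and a lower bound on the systole of $(S,d_n)$, both available from $d_n\to d$ — translated into uniform $C^0$ control of the radial graphs and into a compactness statement for the holonomies in the character variety. The polyhedral existence result of stage (2), if taken from the literature, localizes the remaining work precisely to these estimates and to the identification of the limit's induced metric.
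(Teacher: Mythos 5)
Your three-stage plan (polyhedral approximation of the Alexandrov metric, the Fuchsian polyhedral realization theorem for each approximant, and a compactness/limit argument) is exactly the structure of the paper's proof in Section~\ref{sec 5}, which combines Corollary~\ref{thm:approx}, Theorem~\ref{thm:cas poly}, Proposition~\ref{prop finale}, and Proposition~\ref{prop:main 3}. You have also correctly identified the crux of stage~(3): the a priori estimates pinning the radial graphs between two hyperboloids and controlling the marked length spectrum of the holonomies, so that neither the surfaces (collapse onto the light cone) nor the Fuchsian groups (escape to the boundary of Teichm\"uller space) degenerate.
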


Theorem~\ref{thm:main 2} implies Theorem~\ref{thm1}: the surface $\phi(\tilde S)/\rho(\pi_1S)$ in the flat Lorentzian manifold $L=I^+(0)/\rho(\pi_1S)$ is isometric to $(S,d)$.
One can moreover precise the statement of Theorem~\ref{thm1}:
\begin{itemize}
\item the flat Lorentzian manifold $L$ contains a totally umbilic hyperbolic surface, namely, $\H^2/\rho(\pi_1S)$;
\item actually, if $g$ is the Riemannian metric of $\H^2/\rho(\pi_1S)$, the metric of $L$ is $t^2g-\mbox{d}t^2$;
\item the isometric embedding of $(S,d)$ is a \emph{Cauchy surface}, i.e. it meets exactly once each inextensible non spacelike curve of $L$.
\end{itemize}

Theorem~\ref{thm:main 2} is already known when $(S,d)$ comes from a smooth Riemannian metric $h$ on $S$:
\begin{theorem}[{\cite{LS00}}]\label{thm:ls}
Let $(S,h)$ be a smooth Riemannian metric of negative sectional curvature on a compact surface.
Then there exists a smooth Fuchsian convex isometric  immersion
of $(\tilde S,\tilde h)$ into Minkowski space.
\end{theorem}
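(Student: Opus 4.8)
The plan is to establish Theorem~\ref{thm:ls} by solving the associated equivariant Weyl problem via the continuity method, with parameter the space $\mathcal{M}_{<0}(S)$ of smooth Riemannian metrics of negative Gauss curvature on $S$. First I would use the Fuchsian normalisation: the desired surface meets each future timelike half-line exactly once, hence is a radial graph $\Sigma_{u,\tau}=\{\,e^{u(x)}x : x\in\H^2\,\}$ over the hyperboloid, equivariant under the linear Fuchsian group $\Gamma_\tau\subset O_0(2,1)$ attached to a point $\tau\in\mathcal{T}(S)$, with $u$ descending to a function on $S_\tau:=\H^2/\Gamma_\tau$. Differentiating $e^{u}x$ and using $\langle x,x\rangle_-=-1$ and $\langle x,\d x\rangle_-=0$ gives the induced metric
\begin{equation*}
\mathrm{I}(u,\tau)=e^{2u}\bigl(g_{\H^2,\tau}-\d u\otimes\d u\bigr),
\end{equation*}
which is Riemannian and spacelike exactly when $|\d u|_{g_{\H^2,\tau}}<1$; by the Gauss equation, which in this signature reads $K=-\mathrm{det}_{\mathrm{I}}\mathrm{II}$, the graph is locally strictly convex precisely when $\mathrm{I}(u,\tau)$ has negative curvature, and local convexity of a Cauchy graph upgrades to global future-convexity. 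Hence realising $(S,h)$ amounts to solving $\mathrm{I}(u,\tau)=h$, up to isotopy of $S$, for a convex pair $(u,\tau)$: an elliptic Monge--Amp\`ere-type system whose unknowns are a function $u$ and a point $\tau\in\mathcal{T}(S)$. Write $\mathcal{A}\subseteq\mathcal{M}_{<0}(S)$ for the set of metrics for which it is solvable; the goal is $\mathcal{A}=\mathcal{M}_{<0}(S)$.

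The continuity method needs a base point and a connected parameter space. If $h_0$ is a hyperbolic metric on $S$ and $\tau_0$ satisfies $g_{\H^2,\tau_0}=h_0$, then $u\equiv0$ gives $\Sigma_{0,\tau_0}=\H^2$, the totally umbilic --- hence convex --- hyperboloid, with induced metric $h_0$; so $h_0\in\mathcal{A}$ and $\mathcal{A}\neq\emptyset$. Moreover $\mathcal{M}_{<0}(S)$ is path-connected: writing $h=e^{2\psi}h_0$ by uniformisation with $h_0$ hyperbolic and $\psi\in C^\infty(S)$, the hypothesis $K_h<0$ is the inequality $\Delta_{h_0}\psi>-1$, so each $h_t:=e^{2t\psi}h_0$ has curvature $e^{-2t\psi}(-1-t\,\Delta_{h_0}\psi)<0$ for $t\in[0,1]$, and the path $h_t$ stays in $\mathcal{M}_{<0}(S)$ and joins $h$ to $h_0$. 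It therefore suffices to prove that $\mathcal{A}$ is open and closed in $\mathcal{M}_{<0}(S)$.

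For openness I would linearise the equation $\mathrm{I}(u,\tau)=h$ (modulo isotopy) at a solution. Its $u$-part is a second-order operator that is uniformly elliptic precisely because $\Sigma_{u,\tau}$ is strictly convex; the $\tau$-part contributes the $6g-6$ infinitesimal deformations of the induced conformal class, so the joint linearised operator has Fredholm index zero; and it is injective by the infinitesimal rigidity of strictly convex spacelike surfaces --- the Fuchsian-equivariant analogue of the classical rigidity of convex surfaces, established by an integration-by-parts argument that exploits the Lorentzian signature. An injective Fredholm operator of index zero is an isomorphism, so the implicit function theorem in H\"older spaces provides a neighbourhood of $h$ inside $\mathcal{A}$, smoothness of the new solution following by elliptic bootstrapping.

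Closedness is the heart of the proof and reduces to a priori $C^\infty$ bounds for solutions $(u_n,\tau_n)$ along a convergent sequence $h_n\to h_\infty$ in $\mathcal{M}_{<0}(S)$, which I would obtain in the classical order. A $C^0$ bound on $u_n$ and confinement of $\tau_n$ to a compact part of moduli space come from maximum-principle comparison with the hyperboloids $\{\,x : \langle x,x\rangle_-=-r^2\,\}$ (induced metric $r^2 g_{\H^2}$): the uniform pinching $-b\le K_{h_n}\le -a<0$ forces $\Sigma_n$ into a fixed compact region of $I^+(0)$, and then convexity makes $\mathrm{I}(u_n,\tau_n)$ uniformly comparable to $g_{\H^2,\tau_n}$, controlling the systole of $S_{\tau_n}$. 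A $C^1$ bound --- the surfaces remaining uniformly spacelike, $|\d u_n|$ bounded away from $1$ --- follows from convexity together with the $C^0$ bound, as for any convex Cauchy graph. The crucial estimate is the $C^2$ one, a two-sided bound on the second fundamental form: the Gauss equation $K=-\mathrm{det}_{\mathrm{I}}\mathrm{II}$ with $K$ pinched gives one relation, and a maximum principle for the mean curvature (or a related quantity built from the support function over $\H^2$) supplies the remaining bound, the Lorentzian signature and future-convexity furnishing the global barriers that make this considerably cleaner than the Heinz--Pogorelov estimate of the Riemannian Weyl problem. Once the system is uniformly elliptic with uniform $C^2$ bounds, Schauder and Evans--Krylov estimates upgrade these to $C^{k,\alpha}$ bounds for every $k$, and a subsequence converges to a solution for $h_\infty$; so $\mathcal{A}$ is closed. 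Being nonempty, open and closed in the connected space $\mathcal{M}_{<0}(S)$, $\mathcal{A}$ equals $\mathcal{M}_{<0}(S)$, which is Theorem~\ref{thm:ls}. As in every theorem of Weyl type, the main obstacle is exactly this $C^2$, second-fundamental-form estimate, where the interplay of strict convexity, the Gauss equation and the Lorentzian signature has to be handled with care.
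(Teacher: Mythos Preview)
The paper does not contain a proof of Theorem~\ref{thm:ls}: the statement is quoted from \cite{LS00} as a known result, precisely to motivate Theorem~\ref{thm:main 2} as its extension to general metrics of non-positive curvature. There is therefore no ``paper's own proof'' to compare against.

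That said, your sketch is a faithful outline of the Labourie--Schlenker approach in \cite{LS00}: they do use a continuity method on the space of smooth negatively curved metrics, with the hyperbolic metrics as starting point, openness via linearisation and infinitesimal rigidity of strictly convex equivariant surfaces, and closedness via a priori estimates culminating in a bound on the second fundamental form. Your identification of the $C^2$ estimate as the crux is accurate. A few of your intermediate claims are stated more confidently than a sketch warrants --- in particular the Fredholm index-zero assertion and the precise mechanism for the $C^2$ bound would each require real work to justify --- but as an overview of the strategy in \cite{LS00} this is correct. Note that the present paper follows an entirely different route (polyhedral approximation and compactness) that does not rely on, and does not reprove, the smooth result.
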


Moreover, it is proved in \cite{LS00} that the  immersion is unique among smooth Fuchsian convex immersions, up to composition by an element of $O(2,1)$. If the image of the immersion given by Theorem~\ref{thm:main 2} is known to be $C^1$, then finer results about the regularity of the immersion with respect to the regularity of the metric are available \cite{sok77}. There also exists a result about isometric immersions into $I^+(0)$ of smooth metrics on the disc of curvature with a negative upper bound  \cite{chen-yin}.

Theorem~\ref{thm:main 2} is also already known when $(S,d)$ is a \emph{polyhedral} metric of non-positive curvature, which means that $(S,d)$ is a flat metric on $S$ with conical singularities of negative curvature (i.e. the cone angles are $>2\pi$ at the singular points).
Theorem~\ref{thm:cas poly} below was proved in \cite{Fil11} using a deformation method. More recently, a generalization of Theorem~\ref{thm:cas poly}  was proved  in \cite{leo}, using a variational method.
\begin{theorem}[{}]\label{thm:cas poly}
Let $(S,d)$ be a polyhedral metric of non-positive curvature.
Then there exists a polyhedral Fuchsian convex isometric  immersion
of $(\tilde S,\tilde d)$ into Minkowski space.
\end{theorem}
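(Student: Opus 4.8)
The plan is to run Alexandrov's deformation (invariance of domain) argument in the Fuchsian Lorentzian setting, realizing every polyhedral metric of non-positive curvature at once, not just the given one. Fix an integer $n$, large enough for the base point below to exist, and introduce two spaces. Let $\mathcal{M}_n$ be the set of polyhedral metrics of non-positive curvature on $S$ carrying at most $n$ labelled cone points, up to isotopy; by Troyanov's description of flat cone metrics (a conformal structure, the positions of the cone points, admissible angles, the metric being then unique up to global scale) this is a connected space — connected because the listed parameters range over an obviously connected set and the flat metric depends continuously on them — whose top stratum is a smooth manifold. Let $\mathcal{P}_n$ be the set of Fuchsian convex polyhedra in $\R^{2,1}$ with at most $n$ labelled vertex--orbits, where two are identified if related by an element of $O_0(2,1)$: radial projection to $\H^2$ presents such a polyhedron as the datum of a Fuchsian representation $\rho$ (a point of Teichm\"uller space), an equivariant configuration of $n$ labelled points in $\H^2/\rho(\pi_1 S)$, and $n$ positive radii (the Minkowski norms of the vertices). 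Thus $\mathcal{P}_n$ is connected with top stratum a smooth manifold of the same dimension as that of $\mathcal{M}_n$ (the count is $6g-6+3n$ on both sides, the global scale on the metric side being matched by the homothety $x\mapsto\lambda x$, which preserves Fuchsian convex polyhedra); and $\mathcal{P}_n\neq\emptyset$ once $n$ is large, Fuchsian convex polyhedra being obtained as convex hulls of sufficiently large finite unions of $\pi_1 S$--orbits of points in $I^+(0)$.

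Consider the map $\Phi\colon\mathcal{P}_n\to\mathcal{M}_n$ sending a Fuchsian convex polyhedron to the distance induced on the quotient. First one checks $\Phi$ is well defined, i.e. that this induced distance really is a polyhedral metric of non-positive curvature: away from the vertices the faces are spacelike planes, hence locally isometric to the Euclidean plane, glued along spacelike segments, so the metric is flat there; at a vertex the essential point is the Lorentzian reversal of the Euclidean angle defect — around a vertex of a future-convex spacelike polyhedron the total face angle is strictly greater than $2\pi$, as one sees from the fact that the Lorentzian Gauss map takes values in $\H^2$ (with the area of the Gaussian image of a vertex equal to its angle excess over $2\pi$) rather than in the round sphere — so the cone angles exceed $2\pi$ and the curvature is negative. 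Then I would show $\Phi$ is continuous, proper, and, on the top strata, a local homeomorphism. Properness: if a sequence of Fuchsian convex polyhedra has induced metrics staying in a compact part of $\mathcal{M}_n$ (cone angles bounded away from $2\pi$ and from $\infty$, areas and systoles under control), then the extrinsic data cannot degenerate, since a radius tending to $0$ or $\infty$, a face collapsing, or the Fuchsian group leaving Teichm\"uller space would each force a corresponding intrinsic degeneration of the flat cone metric. Local homeomorphism: the top strata having equal dimension, it suffices that $d\Phi$ be injective, i.e. that Fuchsian convex polyhedra be infinitesimally rigid with respect to the induced metric. Granting these, $\Phi$ is a covering map onto the connected space $\mathcal{M}_n$, and $\mathcal{P}_n\neq\emptyset$, so $\Phi$ is onto; since every polyhedral metric of non-positive curvature has at most $n$ cone points once $n$ is large enough, this proves the theorem. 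As usual in Alexandrov's scheme, some care with degenerate configurations — a vertex of angle exactly $2\pi$, or a marked point falling into the interior of the convex hull — is needed in order to work on the closed spaces $\mathcal{M}_n$, $\mathcal{P}_n$ and not merely on their top strata.

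The principal obstacle is the infinitesimal rigidity of Fuchsian convex polyhedra in $\R^{2,1}$, the Lorentzian equivariant analogue of the rigidity theorems of Cauchy and Dehn. I would attack it through polar duality: the polar dual of a future-convex spacelike polyhedron in $I^+(0)$ is a convex polyhedral surface in de Sitter space $\dS^2$, equivariant under the same Fuchsian group, under which a first-order isometric deformation of the primal induced metric corresponds to a first-order deformation of the dual polyhedron that preserves all of its dihedral angles; the rigidity of such a dual object with respect to its dihedral angles then follows from a Cauchy-type count of sign changes around the vertices of its $1$-skeleton, together with a Lorentzian Schl\"afli/Legendre identity turning the second-order information into a definiteness statement. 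An equivalent route, which is the one used in \cite{leo}, encodes the same input as the non-degeneracy of the Hessian of a discrete Hilbert--Einstein (dual-volume) functional on the space of vertex heights. The secondary obstacle is properness, for which one needs uniform two-sided comparisons between the extrinsic data of a Fuchsian convex polyhedron — radii, edge lengths, dihedral angles, translation lengths of the holonomy — and the intrinsic geometry of the flat cone metric it induces, so as to rule out all modes of degeneration simultaneously.
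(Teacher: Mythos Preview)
The paper does not supply its own proof of this theorem: it is quoted as a known result, attributed to \cite{Fil11} (``using a deformation method'') with a later generalization in \cite{leo} (``using a variational method''). Your proposal is precisely Alexandrov's deformation/invariance-of-domain scheme, which is the method the paper ascribes to \cite{Fil11}; so at the level of strategy you are aligned with the cited source rather than diverging from it.

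As a sketch your outline is sound: the two moduli spaces, the dimension count $6g-6+3n$, the Lorentzian sign reversal giving angle excess rather than defect at vertices, and the identification of infinitesimal rigidity and properness as the two substantive steps are all correct. What you have written is, however, a plan and not a proof. The rigidity step is genuinely nontrivial --- your polar-duality route to a Cauchy-type argument in $\dS^2$, or equivalently the Hessian non-degeneracy of the discrete Hilbert--Einstein functional as in \cite{leo}, does work, but each requires real work you have only named. Properness likewise needs explicit two-sided estimates (controlling radii, holonomy translation lengths, and face geometry in terms of the intrinsic cone-metric data), not just the list of degeneration modes. Finally, the ``care with degenerate configurations'' you allude to --- handling strata where a marked point becomes a smooth (angle $2\pi$) point or a would-be vertex lies in the interior of the convex hull --- is where deformation arguments of this type typically demand the most precision, and your sketch does not yet indicate how you would patch the covering-map argument across those strata.
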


An immersion is \emph{polyhedral} if its image is the boundary of the convex hull (in $\R^3$) of the orbit for $\rho(\pi_1S)$ of a finite number of points in $I^+(0)$. It implies in particular that it is a gluing of compact convex Euclidean polygons \cite{Fil12}.
Here also, the Fuchsian immersion is unique (among polyhedral Fuchsian convex  immersions) up to global isometries. Uniqueness is not known in the general case of Theorem~\ref{thm:main 2}.

The proof of Theorem~\ref{thm:main 2} will be by a classical polyhedral approximation, using Theorem~\ref{thm:cas poly}. Hence we will need to prove some
convergence and compactness results. Note that in Minkowski space, things may behave very differently than in the classical Euclidean space. Major differences may be summarized as follows.
\begin{itemize}
\item There is no Busemann--Feller lemma in Minkowski space. This lemma says that the orthogonal projection onto a convex set does not increase the lengths.
\item The preceding fact is a consequence of the fact that there is no triangle inequality in Minkowski space. Instead, the reversed Cauchy--Schwarz inequality holds: for any vectors $u$ and $v$ in a Minkowski plane
$$\langle u,v \rangle_-^2  \geq \langle u,u\rangle_-\langle v,v\rangle_-~. $$
If moreover $u$ and $v$ are spacelike, then $\langle u,u \rangle_-$ is positive, and the inequality above
leads to the reversed triangle inequality
\begin{equation}\label{reversed ti} \|u\|_- + \|v\|_- \leq \|u+v\|_-~,\end{equation}
i.e. if $x,y,z$ are three points in a Minkowki plane, related by space-like segments, then the Minkowski distance between $x$ and $y$ is greater than the sum of the distance between $x$ and $z$ and the distance between $z$ and $y$.
\item There is no Blaschke selection theorem. This result says that if a sequence of convex surfaces passes through some common point, it suffices to have an uniform bound on the diameters of the distance induced by the ambient Euclidean metric to have a converging subsequence of surfaces.
\item The length structure given by \eqref{eq:length a}
induces a distance $d$, which itself gives a length structure $L_d$. It is not obvious that both length structure coincide on the set of Lipschitz curves. Also note that a priori, $d$ is only a pseudo-distance (i.e. the distance of two distinct points may be zero).
\end{itemize}

Concerning the first point, there is a kind of analogue in Minkowski space, that roughly says that the orthogonal  projection from the \emph{past} of the convex set (which may be an empty set) onto the convex set expands the lengths, see \cite[6.1]{BBZ} for a precise statement. However in the present paper we will need only a trivial case (Lemma~\ref{lem: comp hyp met1}).

Another issue that appears in our case, is that we are looking at surfaces in Minkowski space which are invariant under the action of a group of isometries, and, given a sequence of equivariant immersions, the groups and the immersions may both degenerate, in such a way that the sequence of induced distances  converges. Figure~\ref{fig1} shows this situation in the case of the Minkowski plane: there is an isometry $I_t$
of Minkowski plane, corresponding to a hyperbolic translation of length $t$, and points $\epsilon x$ and $\epsilon I_t(x)$ at (Minkowski) distance $\epsilon$ from the origin. Then it may happen that $t\to\infty$ and $\epsilon \to 0$, but the Minkowski length of the spacelike segment  between $\epsilon x$ and $\epsilon I_t(x)$ remains constant.

\newsavebox{\smlmat}% Box to store smallmatrix content
\savebox{\smlmat}{$\left(\begin{smallmatrix}\cosh(t)&\sinh(t)\\\sinh(t)&\cosh(t)\end{smallmatrix}\right)$}

\begin{figure}[h]\begin{center}
\includegraphics[scale=0.4]{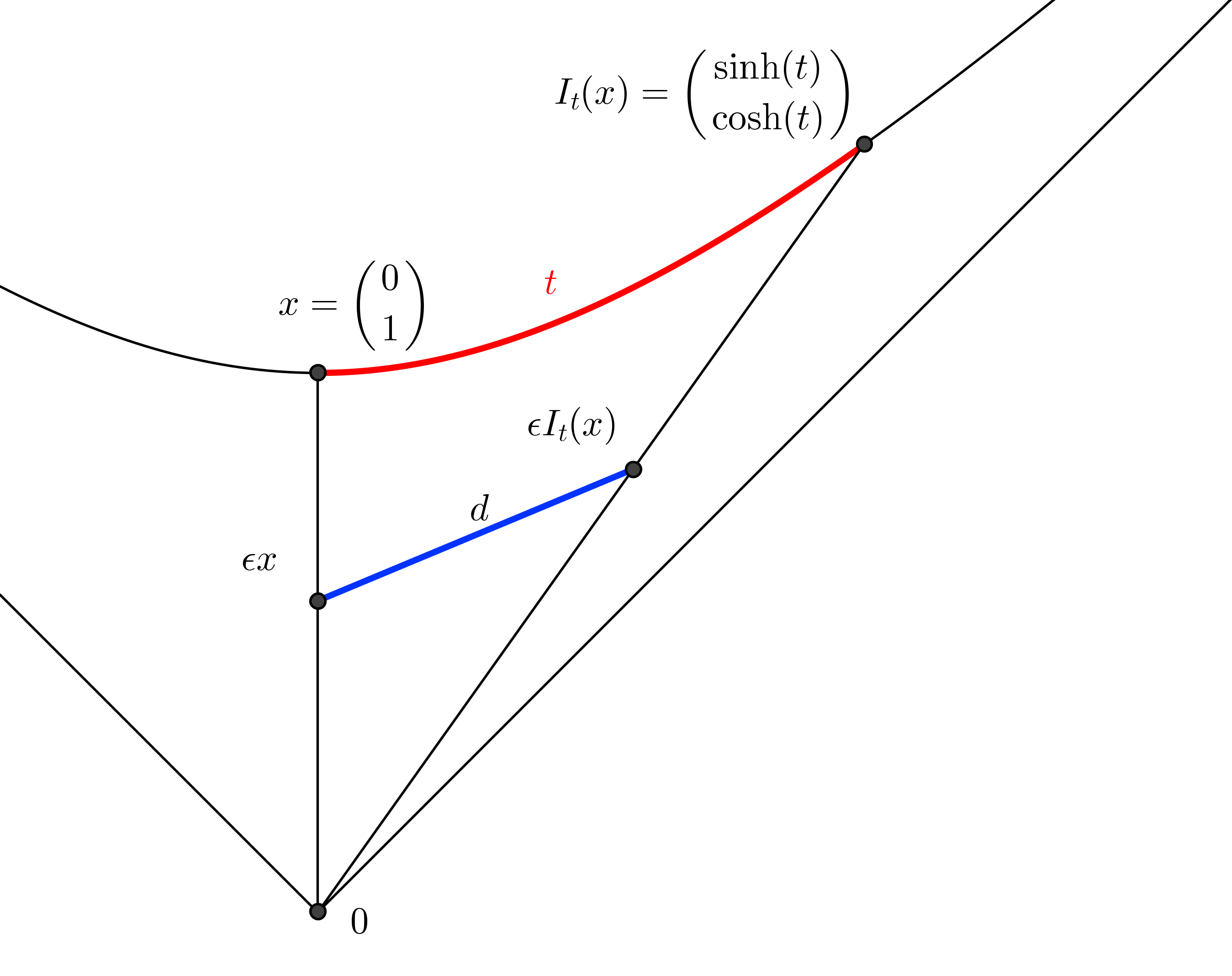}
\end{center}\label{fig1}\caption{We easily compute that $\epsilon\left(\cosh(t)-1\right)^{1/2}=d/\sqrt{2}$, so there is a suitable $\epsilon$ such that even if $t$ is arbitrary large or close to $0$, the length $d$ between $\epsilon x$ and $\epsilon I_t(x)$ remains constant, with $I_t$ the isometry of the Minkowski plane represented by~\usebox{\smlmat}.}\end{figure}

%
%\begin{figure}
%\begin{center}
%\psfrag{x}{$x=\binom{0}{1}$}
%\psfrag{0}{$0$}
%\psfrag{ex}{$\epsilon x$}
%%\psfrag{t}{\textcolor{red}{$t$}}
%%\psfrag{d}{\textcolor{blue}{$d$}}
%\psfrag{eI}{$\epsilon I_t(x)$}
%\psfrag{I}{$I_t(x)=\binom{\sinh(t)}{\cosh(t)}$}
%\includegraphics[width=0.5\linewidth]{graphes2}
%\end{center}\label{fig1}\caption{We easily compute that $\epsilon\left(\cosh(t)-1\right)=d/\sqrt{2}$, so there is a suitable $\epsilon$ such that even if $t$ is arbitrary large or close to $0$, the length $d$ between $\epsilon x$ and $\epsilon I_t(x)$ remains constant, with $I_t$ the isometry of the Minkowski plane represented by~\usebox{\smlmat}.
%}
%\end{figure}

In the setting of Theorem~\ref{thm:main 2},
as we are dealing with groups acting cocompactly on $\H^2$,
the lengths of all the hyperbolic translations cannot be arbitrarily large.
This point is formalized in Section~\ref{section:radial-functions}.

The paper is organized as follows. In Section~\ref{sec 2},
 we recall basic facts about uniform convergence of metric spaces, as well as
 a Theorem of A.D.~Alexandrov and V.A.~Zalgaller about triangulation of surfaces.
In Section~\ref{sec 3}, roughly speaking, we prove that if the images of a sequence of polyhedral Fuchsian convex isometric immersions are contained between two hyperboloids, then there is a converging subsequence of immersions, and the induced distances are converging too.
In Section~\ref{sec 4} we show that if
the induced distances of a sequence of polyhedral Fuchsian convex isometric immersions  converge, then there is a subsequence of converging  surfaces. Section~\ref{sec 4}
is the main part of the paper. Eventually, in Section~\ref{sec 5}, all the elements are put together to provide a proof of Theorem \ref{thm:main 2}.

\medskip

\textbf{Acknowledgment.} The authors want to thank Giona Veronelli who pointed out an error in a preceding version of this text.
Most of this work was achieved when the second author was a post-doc in the AGM institute of the Cergy--Pontoise University. He wants to thank the institution for its support.

\section{Uniform convergence of metric spaces}\label{sec 2}

We first recall some very basic facts about uniform convergence, for which we were unable to find a reference. Then in Section~\ref{paragraph:construction-of-polyhedral-metrics}
we recall a theorem of Alexandrov and Zalgaller about
polyhedral approximations of particular distances on surfaces, which is the cornerstone of the proof of our main theorem.

In the sequel, we will denote $\N  \cup \{\infty\}$ by $\bar \N$. When we will say
``$d$ is a distance on the manifold $M$'', we imply that   the topology induced by $d$ is the topology of the manifold $M$.

\subsection{Uniform convergence}

Recall that a sequence of metric spaces $(M_n,d_n)_n$ uniformly converges to the metric space
$(M,d)$ if there exist homeomorphisms $f_n:M_n\to M$ such that
$$\sup_{x,y\in M_n}|d(f_n(x),f_n(y)) - d_n(x,y) | $$
goes to $0$ when $n$ goes to infinity. If $M_n=M$ for all $n$, then the definition is the usual definition of uniform convergence, considering the distances as maps from $M\times M$ to $\R$.
This is a more restrictive notion of convergence than the usual Gromov--Hausdorff convergence. Actually, uniform convergence implies Gromov--Hausdorff convergence,  see e.g. \cite{BBI2001}. But uniform convergence is the suitable notion for our needs.

The following trick is maybe due to Alexandrov \cite{alex}.

\begin{lemma}\label{lem alex trick}
Suppose that distances $(d_n)$ pointwise converge to $d_\infty$ on a compact manifold $M$, and that there is a distance $d_{\operatorname{max}}$ such that for any $n\in \bar \N$,
$d_n \leq d_{\operatorname{max}}. $ Then the convergence is uniform.
\end{lemma}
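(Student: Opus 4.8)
The plan is to argue by contradiction combined with a compactness argument. Suppose the convergence $d_n \to d_\infty$ is not uniform. Then there is an $\varepsilon > 0$, a subsequence (still denoted $d_n$), and pairs of points $x_n, y_n \in M$ with $|d_n(x_n,y_n) - d_\infty(x_n,y_n)| \geq \varepsilon$ for all $n$. Since $M$ is compact, after passing to a further subsequence we may assume $x_n \to x$ and $y_n \to y$ in $M$. The goal is to derive a contradiction with the pointwise convergence $d_n(x,y) \to d_\infty(x,y)$.

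The key step is to control $d_n(x_n,y_n)$ by $d_n(x,y)$ up to small error, uniformly in $n$, using the triangle inequality for $d_n$ together with the domination $d_n \leq d_{\mathrm{max}}$. Indeed, by the triangle inequality,
\begin{equation*}
|d_n(x_n,y_n) - d_n(x,y)| \leq d_n(x_n,x) + d_n(y_n,y) \leq d_{\mathrm{max}}(x_n,x) + d_{\mathrm{max}}(y_n,y),
\end{equation*}
and the right-hand side tends to $0$ because $d_{\mathrm{max}}$ is a fixed distance inducing the manifold topology and $x_n \to x$, $y_n \to y$. The same estimate applied to $d_\infty$ (which is also $\leq d_{\mathrm{max}}$, since $\infty \in \bar\N$) gives $|d_\infty(x_n,y_n) - d_\infty(x,y)| \to 0$. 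Combining these two facts with the pointwise convergence $d_n(x,y) \to d_\infty(x,y)$, we get
\begin{equation*}
|d_n(x_n,y_n) - d_\infty(x_n,y_n)| \leq |d_n(x_n,y_n) - d_n(x,y)| + |d_n(x,y) - d_\infty(x,y)| + |d_\infty(x,y) - d_\infty(x_n,y_n)| \longrightarrow 0,
\end{equation*}
contradicting the choice of $x_n,y_n$. This completes the proof.

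The main subtlety — rather than a genuine obstacle — is to make sure the domination hypothesis is used at the right place: it is precisely what converts the abstract convergence $x_n \to x$ in the manifold topology into a quantitative bound $d_n(x_n,x) \to 0$ that is \emph{uniform in $n$}. Without it, the $d_n$-distance from $x_n$ to $x$ could fail to be small even though $x_n \to x$ topologically, and the argument would break down. I also need to note explicitly that $d_\infty \leq d_{\mathrm{max}}$ so that the continuity estimate applies to $d_\infty$ as well; this is included in the hypothesis since it is assumed for all $n \in \bar\N$. Everything else is a routine three-term triangle-inequality estimate.
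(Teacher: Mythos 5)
Your proof is correct. Both your argument and the paper's hinge on the same key estimate: $|d_n(x_n,y_n) - d_n(x,y)| \leq d_n(x_n,x) + d_n(y_n,y) \leq d_{\max}(x_n,x) + d_{\max}(y_n,y)$, which converts topological convergence of the points into a quantitative bound uniform in $n$. The paper packages this as an equi-Lipschitz statement for the family $(d_n)$ on $M\times M$ with the product metric built from $d_{\max}$, and then invokes Arzel\`a--Ascoli to upgrade pointwise to uniform convergence. You instead run a direct contradiction: extract a putatively ``bad'' subsequence with $|d_n(x_n,y_n)-d_\infty(x_n,y_n)|\geq\varepsilon$, use sequential compactness of $M$ to get $x_n\to x$, $y_n\to y$, and then close the three-term triangle estimate using the pointwise convergence at $(x,y)$. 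The content is the same, but your version is self-contained and avoids the (mildly glossed-over) step of going from ``Arzel\`a--Ascoli gives a uniformly convergent subsequence'' to ``the whole sequence converges uniformly to $d_\infty$''; in that sense it is a bit more elementary and more complete. You were also right to flag explicitly that $d_\infty\leq d_{\max}$ is needed and is covered by the hypothesis ``for all $n\in\bar\N$''.
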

\begin{proof}
By the triangle inequality,
 $$|d_{n}(x,y)-d_{n}(p,q)|\leq d_{n}(x,p)+d_{n}(y,q) $$ $$\leq  d_{\operatorname{max}}(x,p) + d_{\operatorname{max}}(y,q)~.$$

So  the family of continuous functions $d_{n}$ on $M\times M$ endowed with the product distance $d_{\operatorname{max}} + d_{\operatorname{max}}$ is equi-Lipschitz, hence equicontinous. By Arzela--Ascoli theorem, the convergence of  $(d_{n})_n$  is uniform.
\end{proof}

\begin{lemma}\label{sup metrique}
Let $M$ be a  compact manifold, and
$d_n,d_\infty$ distances on $M$, such that
 $d_n$ uniformly converge to $d_\infty$.
The map $d_{\sup}:M\times M \to \R$ defined by
$$d_{\sup}(x,y)= \sup_{n\in \bar \N} d_n(x,y) $$
is a distance on $M$.
\end{lemma}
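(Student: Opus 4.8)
The plan is to verify the distance axioms for $d_{\sup}$ one at a time, passing to the supremum in the corresponding property of each $d_n$, and then to check separately that the topology defined by $d_{\sup}$ is the manifold topology of $M$ (as required by the convention fixed in Section~\ref{sec 2}). Symmetry and $d_{\sup}(x,x)=\sup_n d_n(x,x)=0$ are immediate. Since $\infty\in\bar\N$, the term $d_\infty$ occurs in the supremum, so $d_\infty\le d_{\sup}$; as $d_\infty$ is a distance, this already gives $d_{\sup}(x,y)>0$ whenever $x\ne y$. For the triangle inequality, from $d_n(x,z)\le d_n(x,y)+d_n(y,z)\le d_{\sup}(x,y)+d_{\sup}(y,z)$ for every $n$ one takes the supremum over $n$ on the left-hand side.

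The point where the hypothesis of uniform convergence enters is finiteness of $d_{\sup}$. By uniform convergence there is $N$ with $\sup_{x,y}|d_n(x,y)-d_\infty(x,y)|\le 1$ for all $n\ge N$, so $d_n\le d_\infty+1$ for such $n$; the finitely many distances $d_n$ with $n<N$, and $d_\infty$ itself, are continuous on the compact space $M\times M$, hence bounded. Therefore $d_{\sup}\le\max\bigl(\max_{n<N}\sup_{M\times M}d_n,\ \sup_{M\times M}d_\infty+1\bigr)<\infty$, and $d_{\sup}$ is a genuine real-valued pseudo-distance; combined with the previous paragraph it is a distance.

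Finally I would check that $d_{\sup}$ induces the topology of $M$, equivalently — since $d_\infty$ already does — that the identity is a homeomorphism $(M,d_{\sup})\to(M,d_\infty)$. In this direction it is $1$-Lipschitz, because $d_\infty\le d_{\sup}$, hence continuous. For the reverse direction, take $x_k\to x$ in $(M,d_\infty)$ and fix $\epsilon>0$; choosing $N$ with $\sup_{x,y}|d_n-d_\infty|<\epsilon$ for $n\ge N$ and splitting off the first $N$ terms of the supremum,
\[
d_{\sup}(x_k,x)\le\max\Bigl(\max_{n<N}d_n(x_k,x),\ d_\infty(x_k,x)+\epsilon\Bigr).
\]
As $k\to\infty$, each of the finitely many terms $d_n(x_k,x)$ with $n<N$ tends to $0$ by continuity of $d_n$ for the manifold topology, and $d_\infty(x_k,x)\to 0$; hence $\limsup_k d_{\sup}(x_k,x)\le\epsilon$, and letting $\epsilon\to 0$ gives $d_{\sup}(x_k,x)\to 0$. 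The main (and only mild) obstacle is precisely this last step: one must play the uniform smallness of the tail of the sequence $(d_n)$ against the continuity of the finitely many initial distances. Everything else is a routine passage to the supremum, so the whole argument stays within a few lines.
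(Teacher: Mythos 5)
Your proof is correct, and it is close in spirit to the paper's argument but more direct in its presentation. The paper first establishes, as a separate Fact, that the supremum of a uniformly convergent sequence of continuous functions on a compact metric space is continuous (Fact~\ref{sup continu}, proved via an $\epsilon$--$\delta$ argument splitting the tail by uniform convergence and the finitely many initial terms by uniform continuity). It then deduces the Lemma by point-set topology: the easy inclusion follows from $d_\infty\le d_{\sup}$ exactly as in your first direction, and the harder inclusion is obtained by applying the Fact to show $d_{\sup}^{-1}([0,\epsilon])$ is closed in $M\times M$ and then projecting via the tube lemma. You instead verify the metric axioms and finiteness explicitly (which the paper dismisses as ``straightforward''), and for the topology you argue sequentially, in effect reproving a sequential version of Fact~\ref{sup continu} inline: the bound
$d_{\sup}(x_k,x)\le\max\bigl(\max_{n<N}d_n(x_k,x),\ d_\infty(x_k,x)+\epsilon\bigr)$
uses the same head/tail split that powers the paper's Fact. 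The practical difference is that you bypass the tube-lemma step and the need to state the continuity fact abstractly; the paper's route has the advantage of isolating Fact~\ref{sup continu} as a reusable statement. Both approaches rely on the same underlying mechanism and on the paper's convention that each $d_n$ induces the manifold topology (needed when you say $d_n(x_k,x)\to0$ for the finitely many $n<N$).
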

Actually it is straightforward that $d_{\sup}$ is a distance. What we imply in the statement of this lemma is that the topology induced by $d_{\sup}$ is the same as the topology of $M$.
It relies on the following fact.

\begin{fact}\label{sup continu}
Let $(f_n)_{n\in \N}$ be a sequence of continuous functions on a compact metric space $(E,m)$, uniformly converging to a function $f_\infty$. Then the function $g=\sup_{n\in \bar \N } f_n$ is continuous.
\end{fact}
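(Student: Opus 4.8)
The plan is to exploit uniform convergence in order to replace the infinite supremum defining $g$ by a finite one, up to an arbitrarily small error, and then invoke the fact that a maximum of finitely many continuous functions is continuous.

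First I would check that $g$ is a well-defined real-valued function. Since $f_n \to f_\infty$ uniformly, the family $(f_n)_n$ is uniformly bounded: for $n$ large, $f_n$ stays within (say) $1$ of $f_\infty$, which is bounded on the compact space $E$, while the finitely many remaining $f_n$ are bounded because they are continuous on a compact space. Hence the pointwise supremum is finite everywhere. I would also recall at this point that $f_\infty$, being a uniform limit of continuous functions, is itself continuous, so it may legitimately be included among the functions whose finite maximum we take below.

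The main step is the following. Fix $\epsilon > 0$ and choose $N \in \N$ with $\sup_{x \in E} |f_n(x) - f_\infty(x)| \le \epsilon$ for all $n \ge N$; in particular $f_n \le f_\infty + \epsilon$ on $E$ for $n \ge N$. Set $g_N := \max\{f_\infty, f_1, \dots, f_{N-1}\}$, which is continuous as a maximum of finitely many continuous functions, and which satisfies $f_\infty \le g_N$. Then for every $x \in E$ one has $g(x) \ge g_N(x)$ trivially, and also
\[
g(x) = \max\Bigl\{ g_N(x),\ \sup_{n \ge N} f_n(x) \Bigr\} \le \max\{ g_N(x),\ f_\infty(x) + \epsilon \} \le g_N(x) + \epsilon,
\]
so that $\sup_{x \in E} |g(x) - g_N(x)| \le \epsilon$. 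Since $\epsilon > 0$ was arbitrary, $g$ is a uniform limit of continuous functions, hence continuous, which proves the Fact (and then Lemma~\ref{sup metrique} follows, applying the Fact to the functions $x \mapsto d_n(x,y)$ with $y$ fixed, together with the analogous statement in the second variable).

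I do not expect a genuine obstacle here. The only point requiring care is the control of the infinite tail $\sup_{n \ge N} f_n$, and this is exactly what the hypothesis of uniform (rather than merely pointwise) convergence is designed to handle; without it the pointwise supremum of continuous functions is typically only lower semicontinuous and can fail to be continuous.
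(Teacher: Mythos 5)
Your proof is correct, and it takes a genuinely different route from the paper's. The paper argues pointwise in the $\epsilon$-$\delta$ style: it uses compactness of $E$ to get \emph{uniform} continuity of $f_\infty$, combines this with uniform convergence to obtain a single $\delta_\infty$ that works for the tail $n\ge N$, then shrinks $\delta$ further using uniform continuity of each of the finitely many $f_n$ with $n<N$; this yields $|f_n(x)-f_n(y)|<\epsilon$ for \emph{all} $n\in\bar\N$ simultaneously, from which $|g(x)-g(y)|\le\epsilon$ follows. In essence the paper proves the family $\{f_n\}_{n\in\bar\N}$ is uniformly equicontinuous and reads off continuity of the sup. Your argument instead shows $g$ is the uniform limit of the continuous truncations $g_N=\max\{f_\infty,f_1,\dots,f_{N-1}\}$, using $f_n\le f_\infty+\epsilon\le g_N+\epsilon$ for $n\ge N$ to control the tail of the sup, and then invokes the standard fact that a uniform limit of continuous functions is continuous. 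Both are correct and both hinge on uniform convergence to tame the tail; yours is arguably cleaner structurally, and notably it does not actually use compactness of $E$ anywhere essential (compactness enters the paper's proof via uniform continuity of $f_\infty$ and of the finitely many initial $f_n$), so your version of the Fact holds on an arbitrary metric space, with compactness only needed in the application to make $d_{\sup}$ a genuine metric inducing the manifold topology.
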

\begin{proof}
Let $\epsilon >0$.
The function $f_\infty$ is continuous hence uniformly continuous on $E$.
So there exists $\delta_\infty>0$ such that if $m(x,y)<\delta_\infty$, then
$$|f_\infty(x)-f_\infty(y)|\leq \epsilon/3~. $$
Moreover, by uniform convergence, there exists $N$ such that for any $n\geq N$ and any $x\in E$,
$$|f_n(x)-f_\infty(x)|\leq \epsilon/3$$
so for $n\geq N$ and $x,y$ such that $m(x,y)<\delta_\infty$,
$$|f_n(x)-f_n(y)| \leq |f_n(x)-f_\infty(x)|+|f_\infty(x)-f_\infty(y)|+|f_\infty(y)-f_n(y)|\leq \epsilon~. $$
Also, for any $ n \leq N$, the function $f_n$ is uniformly continuous, hence there exists $\delta_n>0$ such that if $m(x,y)<\delta_n$ then
$$|f_n(x)-f_n(y)|\leq \epsilon~. $$
So, for $\delta=\min \{\delta_\infty,\delta_n, n \leq N \}$, if $m(x,y)<\delta$, then
$|f_n(x)-f_n(y)|<\epsilon$, for any $n\in \bar\N$. In particular, for all $n\in \bar\N$
$$f_n(y) < f_n(x) + \epsilon~ \leq g(x) + \epsilon~,$$
which implies that $g(y) \leq g(x) + \epsilon$. Similarly, $g(x) \leq g(y) + \epsilon$.
\end{proof}

We will denote by $B_m(x,r)$ the open ball of center $x$ and radius $r$ for the distance $m$.

\begin{proof}[Proof of Lemma~\ref{sup metrique}]
Let $O$ be an open set of $M$.
For any $x\in O$, there is an  $\epsilon >0$ such
that $B_{d_\infty}(x,\epsilon)\subset O$.
But $B_{d_{\sup}}(x,\epsilon)\subset B_{d_\infty}(x,\epsilon) $,
so $O$ is an open set for the topology induced by $d_{\sup}$.

Let $\epsilon>0$.  Let $\bar B$ be a closed ball centred at $x$ with radius $\epsilon$ for $d_{\sup}$.
 By Fact~\ref{sup continu}, $d_{\sup}^{-1}([0,\epsilon])$ is a closed set of $M\times M$.
As $\bar B$ is the projection onto the second factor of $d_{\sup}^{-1}([0,\epsilon]) \cap (\{x\}\times M)$, it is a closed set of $M$ (by the tube lemma, as $M$ is compact, then the projection is a closed map).
\end{proof}

\subsection{Length convergence}\label{sec length}

Let $\mathfrak{L}$ be a length structure on a (connected) manifold $M$. The length structure $\mathfrak{L}$ sends  curves of a given set of curves (supposed non empty), the set of \emph{admissible curves}, to $\R$.  The pseudo-distance
$d$ induced by $\mathfrak{L}$ is defined as follows: $d(x,y)$ is the infimum of the lengths of admissible curves between $x$ and $y$.

In turn, the distance $d$ itself induces a length structure, denoted by $L_d$, and defined as follows:
the length of a curve $c:[a,b]\to M$  is defined as
\begin{equation}\label{def:met}L_d(c)=\sup_{\delta} \sum_{i=1}^{n} d(c(t_i),c(t_{i+1})) \end{equation}
where the sup is taken over all the decompositions
$$\delta = \{(t_1,\ldots,t_{n} )| t_1=a\leq t_2\leq\cdots\leq t_{n}=b\}~.$$ A curve is \emph{rectifiable} if its $L_d$-length is finite.
The length structure $L_d$ is lower-semicontinuous \cite[Proposition~2.3.4]{BBI2001}:
if a sequence of rectifiable curves $c_n:[a,b]\to M$  converges to $c$ (i.e. $c_n(t)\to c(t)$ for all $t$), then
$$L_d(c)\leq \liminf_n L_d(c_n)~. $$

In general, on the set of admissible curves (for $\mathfrak{L}$),
$$L_d\leq \mathfrak{L} $$
unless $\mathfrak{L}$ is lower-semicontinuous, as shows the following result.

\begin{proposition}[{\cite[Theorem~2.4.3]{BBI2001}}]\label{lower semic}
On the set of admissible curves, $L_d=\mathfrak{L}$ if and only if $\mathfrak{L}$ is lower-semicontinous.
\end{proposition}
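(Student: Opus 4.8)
The plan is to treat the two implications separately, after recording the inequality $L_d\leq\mathfrak{L}$ that holds unconditionally on admissible curves. For the latter, given an admissible curve $c:[a,b]\to M$ and a subdivision $a=t_1\leq\cdots\leq t_n=b$, additivity of $\mathfrak{L}$ under concatenation together with the very definition of $d$ as an infimum over admissible curves gives $\sum_i d(c(t_i),c(t_{i+1}))\leq\sum_i\mathfrak{L}(c|_{[t_i,t_{i+1}]})=\mathfrak{L}(c)$; taking the supremum over subdivisions in \eqref{def:met} yields $L_d(c)\leq\mathfrak{L}(c)$.

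For the implication ``$L_d=\mathfrak{L}\Rightarrow\mathfrak{L}$ lower-semicontinuous'', I would simply invoke the lower-semicontinuity of $L_d$ recalled above from \cite[Proposition~2.3.4]{BBI2001}: if admissible curves $c_n$ converge pointwise to $c$ then, discarding the trivial case $\liminf_n\mathfrak{L}(c_n)=\infty$ and passing to a subsequence realizing the $\liminf$ (along which the $c_n$ are rectifiable, since $\mathfrak{L}(c_n)=L_d(c_n)<\infty$), one gets $\mathfrak{L}(c)=L_d(c)\leq\liminf_n L_d(c_n)=\liminf_n\mathfrak{L}(c_n)$.

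The substance is the converse. Assuming $\mathfrak{L}$ lower-semicontinuous, I must upgrade $L_d\leq\mathfrak{L}$ to an equality, i.e. prove $\mathfrak{L}(c)\leq L_d(c)$ for every admissible $c:[a,b]\to M$, the case $L_d(c)=\infty$ being vacuous. Fix $\epsilon>0$ and a subdivision $P=\{a=t_0<\cdots<t_N=b\}$; for each $i$ choose an admissible curve $\gamma_i$ from $c(t_{i-1})$ to $c(t_i)$, reparametrized on $[t_{i-1},t_i]$, with $\mathfrak{L}(\gamma_i)\leq d(c(t_{i-1}),c(t_i))+\epsilon/N$, and let $c_P$ be the concatenation. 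Then $\mathfrak{L}(c_P)\leq\sum_i d(c(t_{i-1}),c(t_i))+\epsilon\leq L_d(c)+\epsilon$. The key point is that along a sequence of subdivisions $P_k$ of $[a,b]$ with mesh tending to $0$, and with $\epsilon=\epsilon_k\to0$, the curves $c_{P_k}$ converge pointwise to $c$: at the subdivision points $c_{P_k}$ agrees with $c$, while for a general $t$ lying in a subinterval $[t_{i-1}^{(k)},t_i^{(k)}]$ one estimates $d\bigl(c_{P_k}(t),c(t_{i-1}^{(k)})\bigr)\leq\mathfrak{L}(\gamma_i^{(k)})\leq\mathfrak{L}\bigl(c|_{[t_{i-1}^{(k)},t_i^{(k)}]}\bigr)+\epsilon_k/N_k$, which tends to $0$ by continuity of the partial-length function $s\mapsto\mathfrak{L}(c|_{[a,s]})$; since $c(t_{i-1}^{(k)})\to c(t)$ and small $d$-balls are topologically small (compatibility of the length structure with the topology of $M$), this forces $c_{P_k}(t)\to c(t)$. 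Lower-semicontinuity of $\mathfrak{L}$ then gives $\mathfrak{L}(c)\leq\liminf_k\mathfrak{L}(c_{P_k})\leq\liminf_k\bigl(L_d(c)+\epsilon_k\bigr)=L_d(c)$, as desired.

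The hard part is exactly this last paragraph: producing approximating admissible curves that are simultaneously $\mathfrak{L}$-almost-shortest on each subinterval and $C^0$-close to $c$. The only mechanism available is that $\mathfrak{L}$-length controls the $d$-distance to a curve's starting point, combined with the axioms of a length structure — continuity of partial lengths and compatibility of $d$ with the topology — which together keep the inserted arcs from straying away from $c$. In settings such as the Minkowski length structure \eqref{eq:length a}, where one knows a priori only that $d$ is a pseudo-distance, this compatibility has to be verified separately before the proposition can be applied.
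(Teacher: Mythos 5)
The paper does not prove this proposition: it is stated as a citation of \cite[Theorem~2.4.3]{BBI2001}, so there is no ``paper's own proof'' to compare against. Your argument is, in substance, the standard one (and, to my recollection, the one in the cited reference): the trivial inequality $L_d\leq\mathfrak{L}$, the easy direction via lower-semicontinuity of $L_d$, and the converse by inserting almost-shortest admissible arcs along a finer and finer subdivision and invoking lower-semicontinuity of $\mathfrak{L}$ once pointwise convergence $c_{P_k}\to c$ is established. Your handling of the easy direction (passing to a subsequence realizing the liminf to stay among rectifiable curves) and your identification of exactly which axioms of a length structure carry the weight in the hard direction --- continuity of partial lengths plus compatibility of $d$ with the topology --- are both correct and to the point.

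One technical remark on the hard direction. You dismiss the case $L_d(c)=\infty$ as vacuous, which is fine, but the remaining argument uses continuity of $s\mapsto\mathfrak{L}(c|_{[a,s]})$, and that axiom of a length structure is only guaranteed for curves with $\mathfrak{L}(c)<\infty$ --- precisely what you do not yet know. The fix is cheap: bound instead $d(c(t_{i-1}^{(k)}),c(t_i^{(k)}))\leq L_d\bigl(c|_{[t_{i-1}^{(k)},t_i^{(k)}]}\bigr)$, which follows straight from \eqref{def:met}, and then appeal to continuity of the partial $L_d$-length, available because $L_d(c)<\infty$ (this continuity for rectifiable curves is itself a standard lemma in \cite{BBI2001}). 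With that substitution your chain of estimates closes without assuming what is to be proved, and the rest of the argument is sound. Your closing caveat about the Minkowski length structure \eqref{eq:length a}, where $d$ is a priori only a pseudo-distance and the topological compatibility has to be checked before the proposition is quoted, is exactly the point the paper is aware of and handles in Lemma~\ref{lemma321}.
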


In the other way, starting from a distance $d$ on $M$, it induces a length structure $L_d$, and this one induces a metric $\hat d$ on $M$. The distance $d$ is called \emph{intrinsic} if $\hat d=d$, i.e. $d(x,y)$ is the inf of the $L_d$-length of rectifiable curves between $x$ and $y$.
If $d$ comes from a length structure $\mathfrak{L}$,
then $d$ is intrinsic  \cite[Proposition~2.4.1]{BBI2001}.

\begin{lemma}\label{lem:limsup}
Let $d_n$, $n\in \N$ be intrinsic distances
on a manifold $M$, and let $\mathfrak{L}_\infty$ be a length structure inducing a pseudo-distance $d_\infty$ on $M$. Suppose  that for any $\mathfrak{L}_\infty$-admissible  curve $c$
$$L_{d_n}(c) \to \mathfrak{L}_{\infty}(c)~. $$
Then for any $x,y\in M,$
\begin{equation*}
\limsup_{n\rightarrow\infty}d_{n}(x,y)\leq d_\infty(x,y)~.
\end{equation*}
\end{lemma}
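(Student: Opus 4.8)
The statement to prove is: if $d_n$ are intrinsic distances on $M$, $\mathfrak{L}_\infty$ is a length structure inducing a pseudo-distance $d_\infty$, and $L_{d_n}(c)\to\mathfrak{L}_\infty(c)$ for every $\mathfrak{L}_\infty$-admissible curve $c$, then $\limsup_n d_n(x,y)\le d_\infty(x,y)$ for all $x,y\in M$.

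Let me sketch how I would prove it.

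The plan is to fix $x,y\in M$ and $\epsilon>0$, and to exploit that $d_\infty$ is defined as an infimum over admissible curves. First I would pick an $\mathfrak{L}_\infty$-admissible curve $c$ from $x$ to $y$ with $\mathfrak{L}_\infty(c)\le d_\infty(x,y)+\epsilon$; such a curve exists by the very definition of the pseudo-distance induced by a length structure. Now the key point is that $d_n(x,y)\le L_{d_n}(c)$: indeed, since $d_n$ is intrinsic, $d_n(x,y)$ is the infimum of $L_{d_n}$-lengths of rectifiable curves joining $x$ and $y$, so any particular curve — in particular $c$, provided it is $L_{d_n}$-rectifiable — gives an upper bound. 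One should check $c$ is $L_{d_n}$-rectifiable: this follows because $L_{d_n}(c)$ converges to the finite number $\mathfrak{L}_\infty(c)$, hence is finite for $n$ large, and for the finitely many remaining $n$ one can argue separately (or simply note the claim is about $\limsup$, so only large $n$ matter). Combining, for all large $n$,
\[
d_n(x,y)\le L_{d_n}(c)~.
\]
Taking $\limsup_n$ and using the hypothesis $L_{d_n}(c)\to\mathfrak{L}_\infty(c)$ gives $\limsup_n d_n(x,y)\le\mathfrak{L}_\infty(c)\le d_\infty(x,y)+\epsilon$. Since $\epsilon>0$ was arbitrary, the conclusion follows.

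The one subtle point — the "main obstacle," although it is minor here — is making sure the inequality $d_n(x,y)\le L_{d_n}(c)$ is legitimately available. This requires that $c$ be an admissible curve for the length structure $L_{d_n}$ (i.e. that it be $L_{d_n}$-rectifiable) and that $d_n$ being intrinsic really means $d_n$ is the infimum of $L_{d_n}$-lengths over \emph{all} rectifiable curves, not some restricted class. Both are fine: rectifiability of $c$ for $L_{d_n}$ holds for $n$ large by the convergence hypothesis, and the definition of intrinsic distance recalled just before the lemma gives exactly the needed infimum characterization. A mild annoyance is that $d_\infty$ is only a pseudo-distance, so one cannot assume $d_\infty(x,y)>0$; but this causes no trouble, since the argument only ever uses $d_\infty(x,y)$ as an upper bound and never divides by it. No compactness or selection argument is needed — this is purely a soft "test against one good competitor curve" argument, dual in spirit to the lower-semicontinuity estimate one would prove for $\liminf$.
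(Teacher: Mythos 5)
Your proof is correct and follows essentially the same route as the paper's: pick a near-optimal admissible curve $c$ with $\mathfrak{L}_\infty(c)\leq d_\infty(x,y)+\epsilon$, bound $d_n(x,y)\leq L_{d_n}(c)$, pass to the $\limsup$, and let $\epsilon\to 0$. The only stylistic difference is that your discussion of rectifiability and intrinsicness for the inequality $d_n(x,y)\leq L_{d_n}(c)$ is unnecessary: this inequality holds for any distance $d_n$ and any curve $c$ from $x$ to $y$, directly from the definition of $L_{d_n}$ as a supremum of subdivision sums (each of which is at least $d_n(x,y)$ by the triangle inequality), with $L_{d_n}(c)=+\infty$ causing no harm.
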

\begin{proof}
As $d_\infty$ is an infimum of length of curves, for every $\epsilon>0$ there exists a curve $c$ on $M$ connecting $x$ and $y$ such that
\begin{equation}\label{eq:curve-c-less-dist-xy+eps}
\mathfrak{L}_{\infty}(c)<d_\infty(x,y)+\epsilon~.
\end{equation}
For any $n$, $ d_{n}(x,y) \leq L_{d_n}(c)$,
then, together with the assumption of the lemma: $$\limsup_{n\rightarrow\infty} d_{n}(x,y) \leq \limsup_{n\rightarrow\infty} L_{d_n}(c)=\mathfrak{L}_{\infty}(c)~.$$

By \eqref{eq:curve-c-less-dist-xy+eps}, we get
\begin{equation*}\label{eq:limsup-less-limitdist+eps}
\limsup_{n\rightarrow\infty}d_{n}(x,y)<d_\infty(x,y)+\epsilon~.
\end{equation*}
Since $\epsilon$ is arbitrary,
the conclusion holds.
\end{proof}

\subsection{Convergence on the universal cover}\label{sec:met on univ cov}

Let us choose a point $x_o$ in the manifold $M$. We will denote by $\pi_1M$ the fundamental group
of $M$ based at $x_o$. Let $\tilde M$ be the universal cover of $M$, on which $\pi_1M$ acts by deck transformations. Let $\mathrm{\bold{p}}$ be the projection $\tilde M\to M$.

A distance $d$ on $M$ defines a length structure $\tilde{ \mathfrak{L}}$ on the set of lifts of rectifiable curves on $M$: the length of $c$ on $\tilde M$ is defined as the $L_d$-length of $\mathrm{\bold{p}}(c)$ on $M$. The length structure
 $\tilde{ \mathfrak{L}}$ defines a metric $\tilde{d}$
 on $\tilde{M}$. As $L_d$ is lower semicontinuous, $\tilde{L}$ is lower semicontinuous, and by Proposition~\ref{lower semic},
 $\tilde{L}=L_{\tilde{d}}$.

Recall the following classical result.
\begin{theorem}[{Hopf--Rinow, \cite[I.3.7]{BH1999},\cite[2.5.23]{BBI2001}}]\label{HR}
If an  intrinsic distance $d$ on  $M$ is complete and locally compact,
then for any $x,y\in M$, there exists a continuous curve joining $x$ and $y$ whose length is equal to $d(x,y)$.

Moreover, every closed bounded subset is compact.
\end{theorem}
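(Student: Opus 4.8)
The plan is to follow the classical argument for the Hopf--Rinow theorem in the generality of length spaces, in two stages: first show that every closed ball of $(M,d)$ is compact, and then deduce the existence of a length-minimising curve between any two points by an Arzel\`a--Ascoli compactness argument. Lower semicontinuity of $L_d$, recalled in Section~\ref{sec length}, will be used only in the second stage.

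For the first stage, I would fix $x\in M$ and set $\rho(x)=\sup\{r\geq 0 : \bar B_d(x,r)\text{ is compact}\}$. Local compactness gives $\rho(x)>0$, and the triangle inequality shows that $\rho$ is $1$-Lipschitz, hence either identically $+\infty$ or everywhere finite; suppose for contradiction that $\rho(x)=R<\infty$. I would first argue that $\bar B_d(x,R)$ is itself compact: being closed in the complete space $(M,d)$ it suffices to prove it is totally bounded, and given $\epsilon>0$ the ball $\bar B_d(x,R-\epsilon)$ is compact, hence covered by finitely many $\epsilon$-balls; since $d$ is intrinsic, any $y$ with $R-\epsilon< d(x,y)\leq R$ is joined to $x$ by a curve of length $< R+\epsilon$, whose point at arclength $R-\epsilon$ from $x$ lies in $\bar B_d(x,R-\epsilon)$ and within $2\epsilon$ of $y$, so enlarging those finitely many balls to radius $3\epsilon$ covers $\bar B_d(x,R)$. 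Next, using local compactness, every point of the compact set $\bar B_d(x,R)$ has a neighbourhood with compact closure; passing to a finite subcover and sliding along near-minimising curves exactly as above, one finds $\delta>0$ such that $\bar B_d(x,R+\delta)$ is contained in a compact set, hence, being closed, is compact --- contradicting the definition of $R$. Therefore $\rho\equiv+\infty$, every closed ball is compact, and so every closed bounded subset of $M$, being closed in some closed ball, is compact.

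For the second stage, fix $x,y\in M$ and put $r=d(x,y)$. Since $d$ is intrinsic there are curves $c_n$ from $x$ to $y$ with $L_d(c_n)\to r$; reparametrising each $c_n$ proportionally to arclength on $[0,1]$ makes them uniformly Lipschitz (with constant $< r+1$ for $n$ large) and all contained in the compact ball $\bar B_d(x,r+1)$. By Arzel\`a--Ascoli a subsequence converges uniformly to a curve $c$ from $x$ to $y$, and lower semicontinuity of $L_d$ yields $L_d(c)\leq\liminf_n L_d(c_n)=r$; since also $L_d(c)\geq d(x,y)=r$, the curve $c$ realises the distance.

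The main obstacle is the first stage, namely ruling out that $\rho(x)$ is finite. The two delicate points are the appeals to the hypothesis that $d$ is intrinsic: in each one must turn a bound on distances into a bound on the length of near-minimising curves and then track points near the sphere of radius $R$ along such curves, once to get total boundedness of $\bar B_d(x,R)$ and once to propagate compactness slightly past radius $R$. Completeness is needed only to upgrade ``closed and totally bounded'' to ``compact''.
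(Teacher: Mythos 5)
The paper does not prove this statement at all: it is stated as a classical theorem and simply cited to \cite[I.3.7]{BH1999} and \cite[2.5.23]{BBI2001}, so there is no ``paper's own proof'' to compare against. Your sketch is a correct outline of the standard Hopf--Rinow--Cohn-Vossen argument for locally compact, complete length spaces, which is essentially the proof given in those two references. The two-stage structure (first bootstrap compactness of closed balls via the $1$-Lipschitz function $\rho(x)=\sup\{r:\bar B_d(x,r)\text{ compact}\}$, then extract a minimiser by Arzel\`a--Ascoli and lower semicontinuity of $L_d$) is exactly right, and the two places where the intrinsic hypothesis enters are identified correctly.

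One small point worth tightening in the first stage: when propagating compactness past radius $R$, ``one finds $\delta>0$ such that $\bar B_d(x,R+\delta)$ is contained in a compact set'' is doing real work and deserves a sentence. The clean way is to use that $\rho$, being $1$-Lipschitz and strictly positive, attains a positive minimum $4\delta$ on the compact set $\bar B_d(x,R)$, and that the same total-boundedness argument you already gave shows $\bar B_d(p,\rho(p))$ is compact for every $p$ (not only for $p=x$); then a finite $\delta$-net $p_1,\dots,p_k$ of $\bar B_d(x,R)$ gives $\bar B_d(x,R+\delta)\subset\bigcup_i\bar B_d(p_i,4\delta)$ by following a near-minimising curve from $x$ to any $y$ with $d(x,y)\in(R,R+\delta]$ out to arclength $R$. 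Each $\bar B_d(p_i,4\delta)$ is compact since $4\delta\leq\rho(p_i)$, so the union is compact, and $\bar B_d(x,R+\delta)$ is closed in it, giving the contradiction. With that detail supplied the argument is complete.
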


Such a curve is called a \emph{shortest path}.
In particular, a shortest path  is rectifiable. Of course, if $d$ is given by a length structure $\mathfrak{L}$, there  is no reason why  a shortest path should be admissible for $\mathfrak{L}$.

 Let us suppose that $M$ is compact, and that $d$ is an intrinsic distance. The metric space $(\tilde M,\tilde{d})$ is complete, hence Hopf--Rinow theorem applies, see \cite[I.8.3(2)-8.4(1)]{BH1999}. Note also that closed balls are compact.

\begin{lemma}\label{lem: pro iso locale}
Let $M$ be a  compact manifold, and
$d_n,d_\infty$ distances on $M$, such that
 $d_n$ uniformly converge to $d_\infty$. For any $x\in \tilde M$ there exists an open set $U$ with $x\in U$
such that the restriction to $U$ of the projection $\mathrm{\bold{p}}:\tilde M \to M$ is an isometry for all the distances $d_n$, $n\in \bar\N$, if $n$ is sufficiently large.
\end{lemma}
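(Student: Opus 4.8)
The plan is to exploit the fact that on a compact manifold $M$ with an intrinsic distance, small metric balls lift homeomorphically to the universal cover, and that uniform convergence controls all the distances $d_n$ ($n\in\bar\N$) simultaneously by a single distance $d_{\sup}$. First I would fix $x\in\tilde M$ and set $\bar x=\mathrm{\bold p}(x)$. Since $d_\infty$ is an intrinsic distance on the compact manifold $M$, it is complete and locally compact, so by Hopf--Rinow (Theorem~\ref{HR}) closed balls are compact; moreover there is a radius $r_0>0$ (an "injectivity-type" radius at $\bar x$, or more simply, using compactness of $M$, a uniform radius) such that the ball $B_{d_\infty}(\bar x,2r_0)$ is evenly covered and the component of $\mathrm{\bold p}^{-1}(B_{d_\infty}(\bar x,2r_0))$ containing $x$ is mapped homeomorphically onto $B_{d_\infty}(\bar x,2r_0)$ by $\mathrm{\bold p}$. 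On this component, the lifted distance $\tilde d_\infty$ and $d_\infty\circ(\mathrm{\bold p}\times\mathrm{\bold p})$ agree on a slightly smaller ball, because any $L_{d_\infty}$-shortest path between two nearby points of $M$ stays inside the evenly covered neighbourhood and hence lifts.

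Next I would transfer this to the distances $d_n$. By Lemma~\ref{sup metrique}, $d_{\sup}=\sup_{n\in\bar\N}d_n$ is a distance inducing the topology of $M$, and $d_n\le d_{\sup}$ for every $n$. By uniform convergence, choose $N$ such that $|d_n(p,q)-d_\infty(p,q)|<r_0/4$ for all $n\ge N$ and all $p,q\in M$. Take $U$ to be the lift of the $d_\infty$-ball $B_{d_\infty}(\bar x,r_0/4)$ sitting inside the chosen sheet. The claim is that for $n\ge N$, $\mathrm{\bold p}|_U$ is an isometry for $d_n$, i.e. $\tilde d_n(y,z)=d_n(\mathrm{\bold p}(y),\mathrm{\bold p}(z))$ for $y,z\in U$. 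The inequality $\tilde d_n(y,z)\ge d_n(\mathrm{\bold p}(y),\mathrm{\bold p}(z))$ is automatic since $\mathrm{\bold p}$ is $1$-Lipschitz for $d_n$ (it sends $L_{d_n}$-rectifiable curves to curves of no greater $d_n$-length, by definition of the lifted length structure). For the reverse inequality, take a $d_n$-shortest path $c$ in $M$ from $\mathrm{\bold p}(y)$ to $\mathrm{\bold p}(z)$ (which exists by Hopf--Rinow applied to $d_n$); its $d_n$-length is at most $d_n(\mathrm{\bold p}(y),\mathrm{\bold p}(z))\le d_\infty(\mathrm{\bold p}(y),\mathrm{\bold p}(z))+r_0/4< r_0/2+r_0/4< r_0$, so every point on $c$ lies in $B_{d_n}(\bar x, r_0)\subset B_{d_\infty}(\bar x, r_0+r_0/4)\subset B_{d_\infty}(\bar x,2r_0)$, whence $c$ stays in the evenly covered neighbourhood, lifts to a curve $\tilde c$ in the same sheet from $y$ to $z$, and $\tilde d_n(y,z)\le L_{d_n}(\mathrm{\bold p}(\tilde c))=L_{d_n}(c)=d_n(\mathrm{\bold p}(y),\mathrm{\bold p}(z))$. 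This gives the equality.

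I expect the main obstacle to be the bookkeeping needed to ensure that a single sheet of the covering works for \emph{all} $n\ge N$ at once: the evenly covered neighbourhood is defined using $d_\infty$, but the shortest paths are $d_n$-shortest paths, so one must compare the two metrics carefully (this is exactly where $d_n\le d_{\sup}$ and the uniform bound $|d_n-d_\infty|<r_0/4$ are used) and check that the radius estimates close up. A secondary subtlety is that "shortest path" for $d_n$ need not be $\mathfrak L$-admissible, but this is irrelevant here: we only use that $L_{d_n}$-length is what $\mathrm{\bold p}$ preserves and that Hopf--Rinow guarantees a rectifiable shortest path. One should also remark that the same $U$ then works simultaneously for $d_\infty$ and all large $n$, which is what the statement asks; shrinking $U$ once more if necessary, one can even take $U$ independent of the finitely many exceptional small $n$ only at the cost of the isometry property failing for those — but the statement only claims it for $n$ sufficiently large, so no further shrinking is needed.
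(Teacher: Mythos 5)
Your proof is correct and follows essentially the same route as the paper's: fix an evenly covered neighbourhood for $d_\infty$, use uniform convergence to control all $d_n$ by $d_\infty$ up to $r_0/4$, and take $U$ small enough that $d_n$-shortest paths stay in the evenly covered set for all large $n$; the role of $d_{\sup}$ (from Lemma~\ref{sup metrique}) is the same. The only real difference is presentational: the paper delegates the ``the covering projection is a $d_n$-isometry on a ball $B_{d_n}(x,r_n)$ once $B_{d_n}(x,2r_n)$ sits in the evenly covered set'' step to Proposition~I-3.25 of Bridson--Haefliger, whereas you reprove it from scratch by lifting a $d_n$-shortest path (obtained via Hopf--Rinow). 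One small arithmetic slip: with your constants, a point $c(t)$ on the $d_n$-shortest path satisfies $d_n(\bar x,c(t))<r_0/2+3r_0/4=5r_0/4$, not $<r_0$ as written; but then $d_\infty(\bar x, c(t))<5r_0/4+r_0/4=3r_0/2<2r_0$, so $c$ still stays inside the evenly covered ball $B_{d_\infty}(\bar x,2r_0)$ and the conclusion goes through. It would be cleaner either to correct the intermediate inequality or to work inside $B_{d_\infty}(\bar x,3r_0)$ to leave more slack.
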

\begin{proof}
Let $O$ be an open set of $\tilde M$ such that $\mathrm{\bold{p}}_{|O}:O \to \mathrm{\bold{p}}(O)$ is a homeomorphism and let $s$ be the inverse of $\mathrm{\bold{p}}_{|O}$. Then for any $r_n$ such that $B_{d_n}(x,2r_n)\subset \mathrm{\bold{p}}(O)$, $s$ is an isometry on $B_{d_n}(x,r_n)$ for $d_n$, see the proof of Proposition~I-3.25 in \cite{BH1999}.

Let $R$ be such that $B_{d_\infty}(x,2R)\subset \mathrm{\bold{p}}(O)$. Let $\epsilon >0$ with $\epsilon <R$. Then by uniform convergence of $(d_{n})_{n}$, there exists $N(\epsilon)\in\mathbb{N}$ such that for any integer $n>N(\epsilon)$,
$B_{d_n}(x,2(R-\epsilon))\subset  B_{d_\infty}(x,2R)$. %and, moreover, $B_{d_n}(x,(R-\epsilon))\subset B_d(x,R)$.
Thus, for every $n>N(\epsilon)$, $\mathrm{\bold{p}}$ is an isometry on  $B_{d_n}(x,R-\epsilon)$.
%In particular $y\in U$, and $s$ is a local isometry on  $B_{d_n}(x,R-\epsilon)$ for $n$ sufficiently large.

By Lemma~\ref{sup metrique},
$B_{d_{\sup}}(x,R-\epsilon)$ is an open set, and $B_{d_{\sup}}(x,R-\epsilon)\subset B_{d_n}(x,R-\epsilon)$ for any $n\in\bar  \N$. So
we can take $U=B_{d_{\sup}}(x, R-\epsilon)$.
\end{proof}

\begin{lemma}\label{lem:upperbound cover}
Let $x,y\in \tilde M$. Under the hypothesis of Lemma~\ref{lem: pro iso locale}, for any $\epsilon>0$, for $n$ sufficiently large,
\begin{equation}\label{eq:upperbound cover-statement}
\tilde{d}_n(x,y) \leq \tilde{d}{_\infty}(x,y) + \epsilon~,
\end{equation}
in particular, $\limsup_n\tilde{d}_n(x,y)\leq \tilde{d}_{\infty}(x,y)~.$
\end{lemma}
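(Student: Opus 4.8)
The plan is to reduce the statement on the universal cover to a statement downstairs on $M$, using the local isometry provided by Lemma~\ref{lem: pro iso locale} together with the lower-semicontinuity machinery of Section~\ref{sec length}. The key observation is that $\tilde d_\infty(x,y)$ is, by definition of the lifted length structure $\tilde{\mathfrak L}$, an infimum of $L_{d_\infty}$-lengths of projections of curves. So, given $\epsilon>0$, first I would pick a rectifiable curve $\tilde c\co[a,b]\to\tilde M$ from $x$ to $y$ with $\tilde{\mathfrak L}(\tilde c)<\tilde d_\infty(x,y)+\epsilon/2$; let $c=\mathrm{\bold p}\circ\tilde c$ be its projection, so that $L_{d_\infty}(c)<\tilde d_\infty(x,y)+\epsilon/2$.

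Next I would transfer the estimate $\tilde d_n(x,y)\le \tilde{\mathfrak L}_n(\tilde c)=L_{d_n}(c)$ (the first inequality holds because $\tilde d_n$ is the infimum of $L_{d_n}$-lengths of projections, and $\tilde c$ is an admissible competitor) to a bound involving $L_{d_\infty}(c)$. This is where one would want something like Lemma~\ref{lem:limsup} or Lemma~\ref{lem: pro iso locale} applied along the compact image $c([a,b])$: cover $c([a,b])$ by finitely many of the evaluation neighborhoods $\mathrm{\bold p}(U)$ from Lemma~\ref{lem: pro iso locale}, subdivide $[a,b]$ so each sub-arc lies in one such neighborhood, and on each sub-arc use that $\mathrm{\bold p}$ is an isometry for $d_n$ (for $n$ large) to write $L_{d_n}$ of the sub-arc as $L$ of the lifted sub-arc. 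Summing over the (finitely many) sub-arcs and using the uniform convergence $d_n\to d_\infty$ on the compact manifold $M$, one gets $L_{d_n}(c)\le L_{d_\infty}(c)+\epsilon/2$ for $n$ large. Combining, $\tilde d_n(x,y)\le L_{d_\infty}(c)+\epsilon/2<\tilde d_\infty(x,y)+\epsilon$, which is exactly \eqref{eq:upperbound cover-statement}; the $\limsup$ assertion is then immediate.

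There is a subtlety in passing from $L_{d_n}$ of a curve to $L_{d_\infty}$ of the same curve: $L_{d_n}(c)$ is a supremum over partitions of sums of $d_n$-distances between consecutive points, and uniform convergence $|d_n-d_\infty|<\eta$ only controls each individual term, not the supremum over arbitrarily fine partitions (the number of terms is unbounded). This is the main obstacle, and it is precisely why one must work locally: after subdividing so each sub-arc $c|_{[t_i,t_{i+1}]}$ lies in a neighborhood where $\mathrm{\bold p}$ is a $d_n$-isometry, the $L_{d_n}$-length of that sub-arc equals the $L_{d}$-length of its canonical lift, which for $d$ intrinsic and the surface locally nice is controlled; one then only needs finitely many sub-arcs, so the $\epsilon$'s add up in a controlled way. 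Alternatively, and perhaps more cleanly, one can invoke Lemma~\ref{lem:limsup} directly with $\tilde d_n$ as the intrinsic distances and $\tilde{\mathfrak L}_\infty$ as the limiting length structure on $\tilde M$, provided one first checks its hypothesis $L_{\tilde d_n}(\tilde c)\to\tilde{\mathfrak L}_\infty(\tilde c)$ for each $\tilde{\mathfrak L}_\infty$-admissible $\tilde c$ — and this convergence is exactly what the local-isometry covering argument of the previous paragraph establishes. So the cleanest writeup is: verify the hypothesis of Lemma~\ref{lem:limsup} on $\tilde M$ via Lemma~\ref{lem: pro iso locale} and the finite-subcover/uniform-convergence argument, then quote Lemma~\ref{lem:limsup} to conclude.
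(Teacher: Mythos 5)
Your proposal identifies the right basic idea (cover a nearly-optimal curve with local-isometry neighborhoods and use the uniform convergence of $(d_n)$ on the base), but the step where you try to push it through --- bounding $L_{d_n}(c)$ by $L_{d_\infty}(c)+\epsilon/2$ --- has a gap that the localization does not close. You correctly observe that $L_{d_n}(c)$ is a supremum over arbitrarily fine partitions, so uniform convergence of the $d_n$ only controls a fixed finite sum, not the supremum. Localizing so that each sub-arc $c|_{[t_i,t_{i+1}]}$ sits in a set where $\mathrm{\bold{p}}$ is a $d_n$-isometry lets you rewrite $L_{d_n}$ of that sub-arc as $L_{\tilde d_n}$ of the lifted sub-arc, but it does not produce the bound $L_{\tilde d_n}(\tilde c|_{[t_i,t_{i+1}]})\leq L_{\tilde d_\infty}(\tilde c|_{[t_i,t_{i+1}]})+(\text{small})$ that you would then need; that is the same supremum-over-partitions problem on a smaller piece. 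The clause ``which for $d$ intrinsic and the surface locally nice is controlled'' is exactly the missing step. Your alternative route --- verifying $L_{\tilde d_n}(\tilde c)\to\tilde{\mathfrak{L}}_\infty(\tilde c)$ so as to quote Lemma~\ref{lem:limsup} --- fails for the same reason: that hypothesis is precisely the length convergence that has not been established.

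The paper avoids the issue by never comparing lengths of a fixed curve. It takes a $\tilde d_\infty$-shortest path $\tilde c$ from $x$ to $y$ (which exists by Hopf--Rinow), covers its compact image by finitely many of the local-isometry neighborhoods of Lemma~\ref{lem: pro iso locale}, and chooses finitely many consecutive points $z_1=x,\,z_2,\dots,z_{k+1}=y$ along $\tilde c$ so that each consecutive pair lies in a common neighborhood. The triangle inequality for $\tilde d_n$ gives $\tilde d_n(x,y)\leq\sum_i\tilde d_n(z_i,z_{i+1})$ --- a \emph{finite} sum, so no supremum enters --- and for each term the local isometry plus uniform convergence downstairs yields $\tilde d_n(z_i,z_{i+1})\leq\tilde d_\infty(z_i,z_{i+1})+\epsilon/k$. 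Since $\tilde c$ is a shortest path, $\sum_i\tilde d_\infty(z_i,z_{i+1})\leq L_{\tilde d_\infty}(\tilde c)=\tilde d_\infty(x,y)$, and summing gives \eqref{eq:upperbound cover-statement}. The crucial move, which your argument lacks, is to bound \emph{distances} (infima, hence well behaved under termwise control via the triangle inequality) rather than \emph{lengths} (suprema).
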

\begin{proof}
 Let $\tilde{c}:[0,1]\to \tilde M$ be a shortest path  for the distance $\tilde{d}_\infty$ between $x$ and $y$.
By Lemma~\ref{lem: pro iso locale}, for any $p\in \tilde{M}$, there exists an
open set $U_p$ such that, up to a finite number of indices $n$,
$\mathrm{\bold{p}}$ is an isometry on $U_p$. Thus, there is an integer $k$ and  points $p_i$, $i=1,\ldots,k$ on the compact set $\tilde{c}([0,1])$ such that the $U_{p_i}$ cover $\tilde{c}([0,1])$, $p_1=x,p_{k}=y$. Let us also take some points $z_i$, $i=1,\ldots,k+1$ on $\tilde{c}([0,1])$ such that $z_1=x,z_{k+1}=y$ and $z_i\in U_{p_{i}}\cap U_{p_{i+1}}$ for $1<i<k$. Then
\begin{equation}\label{eq:upperbound cover-d<L}
\tilde{d}_\infty(x,y)=L_{\tilde{d}}(\tilde{c}) \geq \sum_{i=1}^{k} \tilde{d}(z_i,z_{i+1})~.
\end{equation}
Let $\epsilon>0$. By uniform convergence, for $n$ sufficiently large,
$$d_n(\mathrm{\bold{p}}(z_i),\mathrm{\bold{p}}(z_{i+1})) \leq d_\infty (\mathrm{\bold{p}}(z_i),\mathrm{\bold{p}}(z_{i+1}))+ \epsilon/k$$
and as $\mathrm{\bold{p}}$ is a local isometry for all the $d_n$, for $n$ sufficiently large, on any $U_i$ (Lemma~\ref{lem: pro iso locale}), we obtain
\begin{equation}\label{eq:upperbound cover-d_n-via-d}
\tilde{d}_n(z_i,z_{i+1}) \leq \tilde{d}_\infty (z_i,z_{i+1})+ \epsilon/k~.
\end{equation}
Combining the formulas~(\ref{eq:upperbound cover-d<L}) and~(\ref{eq:upperbound cover-d_n-via-d}), and applying several times the triangle inequality, we get~(\ref{eq:upperbound cover-statement}).
%so
%$$\tilde{d}(x,y)\geq \sum_{i=1}^k \tilde{d}_n(z_i,z_{i+1})  - \epsilon \geq \tilde{d_n}(x,y) - \epsilon~.$$
\end{proof}

The following lemma is Theorem~1 and Theorem~2 p.225 of \cite{AZ}.

\begin{lemma}\label{lem:con curves}
Let $M$ be a  compact manifold, and
$d_n,d_\infty$ distances on $M$, such that
 $d_n$ uniformly converge to $d_\infty$.
Let $c_n$ be curves on $M$. If there exists $L>0$ such that
for any $n$, $L_{d_n}(c_n) <L$, then, up to extract a subsequence,
the curves $c_n$ converge to a curve $c$, and
$$L_{d_\infty}(c) \leq \liminf_n L_{d_n}(c_n)~. $$
\end{lemma}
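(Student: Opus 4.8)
The statement is an Arzelà--Ascoli type compactness result for curves, combined with lower semicontinuity of length, all transported along the homeomorphisms $f_n$ that witness uniform convergence. Since the reference is Theorems~1 and~2 on p.~225 of \cite{AZ}, I would not attempt to reprove it from scratch, but rather indicate how the two ingredients fit together in our uniform-convergence setting. The plan is to first reduce to a fixed metric space, then extract a convergent subsequence by equicontinuity coming from the length bound, then pass the length bound to the limit by lower semicontinuity.

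First I would use the uniform convergence: fix the homeomorphisms $f_n\colon M_n\to M$ (here all $M_n=M$, but the homeomorphisms are still part of the data) and, replacing $c_n$ by $f_n\circ c_n$, work with curves valued in the single compact space $(M,d_{\sup})$, where $d_{\sup}$ is the distance of Lemma~\ref{sup metrique}; note $d_{\sup}$ induces the manifold topology and dominates every $d_n$, while $d_\infty \le d_{\sup}$ as well. After reparametrizing each $c_n$ proportionally to its $d_n$-arclength on $[a,b]$, the uniform bound $L_{d_n}(c_n)<L$ gives $d_n(c_n(s),c_n(t)) \le \frac{L}{b-a}|s-t|$, hence, since $d_n\to d_\infty$ uniformly and $d_n\le d_{\sup}$, the family $(c_n)$ is equicontinuous and has values in a compact set. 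By Arzelà--Ascoli there is a subsequence converging uniformly, hence pointwise, to a curve $c\colon[a,b]\to M$.

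Next I would establish the length inequality. Along the convergent subsequence, fix a partition $\delta=\{a=t_0\le\cdots\le t_N=b\}$. For each $i$, $d_\infty(c(t_i),c(t_{i+1})) = \lim_n d_n(c_n(t_i),c_n(t_{i+1}))$, because $c_n(t_i)\to c(t_i)$ and, by the equi-Lipschitz estimate in the proof of Lemma~\ref{lem alex trick} together with uniform convergence $d_n\to d_\infty$, one has $|d_n(c_n(t_i),c_n(t_{i+1})) - d_\infty(c(t_i),c(t_{i+1}))| \to 0$. Therefore
\begin{equation*}
\sum_{i=0}^{N-1} d_\infty(c(t_i),c(t_{i+1})) = \lim_n \sum_{i=0}^{N-1} d_n(c_n(t_i),c_n(t_{i+1})) \le \liminf_n L_{d_n}(c_n)~.
\end{equation*}
Taking the supremum over all partitions $\delta$ on the left gives $L_{d_\infty}(c)\le \liminf_n L_{d_n}(c_n)$, which is the claim. (If one prefers, the passage from a fixed partition to the supremum is exactly the lower semicontinuity statement quoted before Proposition~\ref{lower semic}, applied to the distance $d_\infty$.)

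The main obstacle is the reparametrization step and making sure the subsequence extraction survives it: the curves $c_n$ come with arbitrary parametrizations, and one must first reparametrize to a common domain by arclength (or constant speed) in the distance $d_n$ to get a usable modulus of continuity, and only then can Arzelà--Ascoli be invoked in the single compact space $(M,d_{\sup})$. One must also be slightly careful that after reparametrization the $L_{d_n}$-length is unchanged (length is parametrization-independent) and that degenerate sub-intervals where $c_n$ is $d_n$-constant do not cause trouble; this is routine but is the place where the argument could go wrong if done carelessly. Everything else is a direct application of Arzelà--Ascoli and the semicontinuity of $L_{d_\infty}$.
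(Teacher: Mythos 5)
The paper does not actually prove this lemma: it is stated and then referred to Theorems~1 and~2 on p.~225 of \cite{AZ}, so there is no in-paper argument to compare against. Your reconstruction (reparametrize by arclength to get an equi-Lipschitz family, extract a uniformly convergent subsequence by Arzel\`a--Ascoli, then pass to the limit on a fixed partition and take the supremum) is the standard proof and is correct in outline. Two small points deserve care. First, the equicontinuity you obtain is most naturally stated with respect to $d_\infty$, not $d_{\sup}$: the estimate $d_n(c_n(s),c_n(t))\le \frac{L}{b-a}|s-t|$ together with $\sup|d_n-d_\infty|\to 0$ controls $d_\infty(c_n(s),c_n(t))$ for $n$ large, while $d_n\le d_{\sup}$ goes in the unhelpful direction and does not by itself give a $d_{\sup}$-modulus of continuity; since $(M,d_\infty)$ is compact and induces the manifold topology, Arzel\`a--Ascoli applies there and that is all you need. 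Second, and more importantly, your final displayed chain gives $L_{d_\infty}(c)\le\liminf_k L_{d_{n_k}}(c_{n_k})$ along the extracted subsequence, which is \emph{a priori weaker} than the stated $L_{d_\infty}(c)\le\liminf_n L_{d_n}(c_n)$ over the full sequence (a liminf along a subsequence can only be larger). The fix is the usual one and should be said explicitly: first pass to a subsequence $(n_k)$ along which $L_{d_{n_k}}(c_{n_k})$ converges to $\liminf_n L_{d_n}(c_n)$, and only then run the Arzel\`a--Ascoli extraction; along the resulting sub-subsequence the liminf of lengths equals $\liminf_n L_{d_n}(c_n)$, and your argument closes. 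This form of the inequality is what is actually used in Proposition~\ref{prop: conv loc unif} and Lemma~\ref{lem boules bornees}. The worry you raise about reparametrization interfering with the subsequence extraction is, by contrast, not a real issue: length is reparametrization-invariant and the constant-speed reparametrization is a well-defined $1$-Lipschitz curve for each $n$.
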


\begin{proposition}\label{prop: conv loc unif}
Let $M$ be a  compact manifold, and
$d_n,d_\infty$ distances on $M$, such that
 $d_n$ uniformly converge to $d_\infty$.
Up to extract a subsequence,
$(\tilde{d}_n)_n$ converge to $\tilde{d}_\infty$,
uniformly on compact sets.
\end{proposition}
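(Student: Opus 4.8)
The plan is to combine the pointwise upper bound already obtained on the universal cover (Lemma~\ref{lem:upperbound cover}) with a matching lower bound, and then upgrade pointwise convergence to local uniform convergence via an Arzel\`a--Ascoli argument as in Lemma~\ref{lem alex trick}. Fix a compact set $K\subset\tilde M$; since $\tilde M$ is a covering of a compact manifold, we may assume $K$ is contained in a large closed ball for $\tilde d_{\sup}$, and in particular the diameters of $K$ for all the $\tilde d_n$, $n\in\bar\N$, are uniformly bounded by some $D$. First I would establish the lower bound $\liminf_n \tilde d_n(x,y)\geq \tilde d_\infty(x,y)$ for $x,y\in\tilde M$. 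For this, take shortest paths $\tilde c_n$ for $\tilde d_n$ joining $x$ and $y$; their $\tilde d_n$-lengths equal $\tilde d_n(x,y)\leq D$, so by Lemma~\ref{lem:con curves} applied downstairs to $c_n=\mathrm{\bold{p}}(\tilde c_n)$ (whose $L_{d_n}$-lengths equal $\tilde d_n(x,y)$ and are bounded), a subsequence of the $c_n$ converges to a curve $c$ with $L_{d_\infty}(c)\leq\liminf_n \tilde d_n(x,y)$; lifting $c$ to the curve $\tilde c$ from $x$ (using that the $\tilde c_n$ converge, by local injectivity of $\mathrm{\bold{p}}$ and the uniform convergence, to a lift ending at $y$) gives $\tilde d_\infty(x,y)\leq L_{\tilde d}(\tilde c)=L_{d_\infty}(c)\leq\liminf_n \tilde d_n(x,y)$. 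Together with Lemma~\ref{lem:upperbound cover} this yields $\tilde d_n(x,y)\to\tilde d_\infty(x,y)$ pointwise, after passing to a subsequence.

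Next I would produce a single distance $\tilde d_{\max}$ on $\tilde M$ dominating all the $\tilde d_n$. The natural candidate is $\tilde d_{\sup}=\sup_{n\in\bar\N}\tilde d_n$; one must check it is a genuine distance whose topology is that of $\tilde M$. By Lemma~\ref{sup metrique} the analogous statement holds downstairs for $d_{\sup}$, and locally $\mathrm{\bold{p}}$ is an isometry for every $d_n$ with $n$ large (Lemma~\ref{lem: pro iso locale}), so $\tilde d_{\sup}$ agrees locally with a lift of $d_{\sup}$ near each point, away from finitely many indices; handling the finitely many exceptional indices separately (each $\tilde d_n$ is continuous and proper), one concludes $\tilde d_{\sup}$ is a locally finite, proper distance inducing the right topology, so closed $\tilde d_{\sup}$-balls are compact.

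Finally, restrict to a compact set $K$, which we take to be a closed $\tilde d_{\sup}$-ball. On $K\times K$ the functions $\tilde d_n$ are, by the triangle inequality exactly as in the proof of Lemma~\ref{lem alex trick}, equi-Lipschitz with respect to $\tilde d_{\sup}+\tilde d_{\sup}$, hence equicontinuous; being also uniformly bounded by $\mathrm{diam}_{\tilde d_{\sup}}(K)$, Arzel\`a--Ascoli gives a subsequence converging uniformly on $K\times K$, and the limit must be $\tilde d_\infty$ by the pointwise convergence established above. A diagonal argument over an exhaustion of $\tilde M$ by compact sets $K_j$ (say $\tilde d_{\sup}$-balls of radius $j$), extracting nested subsequences, produces a single subsequence along which $\tilde d_n\to\tilde d_\infty$ uniformly on every compact subset of $\tilde M$. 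The main obstacle I anticipate is the lower bound step: one needs the limit curve $c$ supplied by Lemma~\ref{lem:con curves} to actually be realized as (the projection of) a curve from $x$ to $y$ in $\tilde M$, which requires some care in passing between $M$ and $\tilde M$ — namely that convergence of the downstairs curves $c_n$ lifts to convergence of the chosen lifts $\tilde c_n$ with the correct endpoints, using local isometry of $\mathrm{\bold{p}}$ (Lemma~\ref{lem: pro iso locale}) and a compactness/uniform-continuity argument along the curves.
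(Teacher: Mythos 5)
Your proposal follows essentially the same route as the paper: project $\tilde d_n$-shortest paths downstairs, invoke Lemma~\ref{lem:con curves} together with Lemma~\ref{lem:upperbound cover} to get pointwise convergence, then dominate all $\tilde d_n$ by a single distance $\tilde d_{\sup}$ and apply the Arzel\`a--Ascoli argument of Lemma~\ref{lem alex trick}. The lifting issue you flag at the end (that the downstairs limit curve $c$ must be realized as a curve in $\tilde M$ from $x$ to $y$) is exactly what the paper's proof leaves implicit, so your extra care there is warranted but does not change the argument.
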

\begin{proof}
Let $x,y\in \tilde{M}$, and $\tilde{c}_n$ be a shortest path between $x$ and $y$ for $\tilde{d}_n$, $n\in \N$. So, if $c_n=\mathrm{\bold{p}}(\tilde{c}_n)$, then
$$\tilde{d}_n(x,y)=L_{\tilde{d}_n }(\tilde{c}_n)=
L_{d_n}(c_n)~.$$
By Lemma~\ref{lem:upperbound cover}, $L_{d_n}(c_n)$ are uniformly bounded from above for $n$ sufficiently large. By Lemma~\ref{lem:con curves}, up to extract a subsequence, there is a curve $c$ between $x$ and $y$ with
$$\tilde{d}_\infty(x,y)\leq L_{d_\infty}(c) \leq \liminf_n L_{d_n}(c_n)=\liminf_n  \tilde{d}_n(x,y)~. $$
This and Lemma~\ref{lem:upperbound cover}
give the pointwise convergence. The result follows from Lemma~\ref{lem alex trick}:
 by the definition of the length of the curve,  for any $n\in \bar \N$,
$ \tilde{d}_n \leq \tilde{d}_{\sup}~.$

\end{proof}

\begin{lemma}\label{lem boules bornees}
Let $M$ be a  compact manifold, and
$d_n,d_\infty$ distances on $M$, such that
 $d_n$ uniformly converge to $d_\infty$. Then for any $r>0$ and any $x\in \tilde{M}$,
 $\cup_{n\in \bar{\N}}B_{\tilde{d}_n}(x,r)$ is bounded in $\tilde{M}$ for $\tilde{d}_\infty$.
\end{lemma}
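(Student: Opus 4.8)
The goal is to show that $\cup_{n\in\bar\N}B_{\tilde d_n}(x,r)$ is contained in some $\tilde d_\infty$-ball around $x$. The natural tool is the distance $d_{\sup}$ from Lemma~\ref{sup metrique}, which dominates every $d_n$ on $M$; I want to push this domination up to the universal cover and then invoke Hopf--Rinow for $\tilde d_\infty$ (which tells us closed bounded sets are compact, so it suffices to bound the union).

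First I would compare the lifted distances with the lift of $d_{\sup}$. The subtlety is that $d_{\sup}$ is a genuine distance on $M$ inducing the right topology (Lemma~\ref{sup metrique}), but it is a priori not intrinsic, so one cannot directly lift it as in Section~\ref{sec:met on univ cov}. Instead I would argue directly at the level of curves: for any rectifiable curve $c$ on $M$ and any $n\in\bar\N$, the inequality $d_n\le d_{\sup}$ on $M$ gives, term by term in the defining sums~\eqref{def:met}, $L_{d_n}(c)\le L_{d_{\sup}}(c)$. Hence for a lifted curve $\tilde c$ from $x$ to a point $y\in\tilde M$, one has $\tilde d_n(x,y)=\inf L_{d_n}(\mathrm{\bold p}(\tilde c))\le L_{d_{\sup}}(\mathrm{\bold p}(\tilde c))$; taking the infimum over such lifted curves, $\tilde d_n(x,y)\le \tilde\ell(x,y)$, where $\tilde\ell$ is the (intrinsic) metric on $\tilde M$ induced by the length structure of $d_{\sup}$ on $M$. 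In particular this bound is uniform in $n\in\bar\N$. So $B_{\tilde d_n}(x,r)\subset B_{\tilde\ell}(x,r)$ for every $n\in\bar\N$, and therefore $\cup_{n\in\bar\N}B_{\tilde d_n}(x,r)\subset B_{\tilde\ell}(x,r)$.

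It remains to show $B_{\tilde\ell}(x,r)$ is bounded for $\tilde d_\infty$. Since $M$ is compact and $d_{\sup}$ induces the manifold topology, $d_{\sup}$ is a complete locally compact intrinsic distance, so by Hopf--Rinow (Theorem~\ref{HR}) and the remark after it, $(\tilde M,\tilde\ell)$ is a complete, locally compact length space in which closed bounded sets are compact; in particular the closed $\tilde\ell$-ball $\bar B_{\tilde\ell}(x,r)$ is compact. A compact subset of $\tilde M$ is automatically bounded for any distance inducing the topology of $\tilde M$ — in particular for $\tilde d_\infty$, since the continuous function $\tilde d_\infty(x,\cdot)$ attains a maximum on it. Hence $\cup_{n\in\bar\N}B_{\tilde d_n}(x,r)$ is $\tilde d_\infty$-bounded, as required.

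The main obstacle is the first step: justifying cleanly that $d_n\le d_{\sup}$ as distances on $M$ transfers to $\tilde d_n\le\tilde\ell$ on $\tilde M$ with a single bound valid simultaneously for all $n\in\bar\N$ (including $n=\infty$), despite $d_{\sup}$ not itself being one of the intrinsic $d_n$. The clean way is to work with the length structures rather than the distances directly: domination of distances on $M$ yields domination of the induced length functionals $L_{d_n}\le L_{d_{\sup}}$ on all rectifiable curves, and this lifts verbatim via the construction $\tilde{\mathfrak L}$ of Section~\ref{sec:met on univ cov}. Everything after that is a routine appeal to Hopf--Rinow plus compactness.
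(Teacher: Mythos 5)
The central step of your proof is wrong: you derive the correct inequality $\tilde d_n \le \tilde\ell$, but then conclude $B_{\tilde d_n}(x,r)\subset B_{\tilde\ell}(x,r)$, which is the reverse of what that inequality gives. If $m_1\le m_2$ as distances, then $m_2(x,y)<r$ implies $m_1(x,y)<r$, i.e.\ $B_{m_2}(x,r)\subset B_{m_1}(x,r)$: a larger metric has \emph{smaller} balls. So from $\tilde d_n\le\tilde\ell$ you only get $B_{\tilde\ell}(x,r)\subset B_{\tilde d_n}(x,r)$, which goes in the useless direction. Conceptually, dominating the $d_n$ from above (which is what $d_{\sup}$ does) can only shrink the balls $B_{\tilde d_n}(x,r)$, not enclose their union. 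To enclose the union you would need a \emph{lower} bound $m\le\tilde d_n$ valid for all $n$, and no such metric is produced by the $\sup$ construction (the pointwise infimum $\inf_n d_n$ need not satisfy the triangle inequality). This is not a repairable slip: the whole strategy of ``push the $d_{\sup}$ domination up to the cover'' is pointed the wrong way.

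A secondary issue: your second paragraph first correctly observes that $d_{\sup}$ is a priori not intrinsic, but the Hopf--Rinow step then asserts that ``$d_{\sup}$ is a complete locally compact intrinsic distance.'' Even replacing $d_{\sup}$ by its induced intrinsic distance $\hat d_{\sup}$ does not obviously help: you would still need to know that $\hat d_{\sup}$ is finite and induces the manifold topology, neither of which follows from Lemma~\ref{sup metrique} alone, since the sups in $L_{d_{\sup}}$ and in $d_{\sup}=\sup_n d_n$ do not commute.

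The paper instead argues by contradiction: if points $y_{n_k}\in B_{\tilde d_{n_k}}(x,r)$ escaped to $\tilde d_\infty$-infinity, the $\tilde d_{n_k}$-shortest paths from $x$ to $y_{n_k}$ would project to curves on the compact $M$ with $L_{d_{n_k}}$-length uniformly less than $r$; Lemma~\ref{lem:con curves} then yields a convergent subsequence of these projected curves, their lifts starting at $x$ converge as well, hence the endpoints $y_{n_k}$ converge in $\tilde M$, contradicting the escape to infinity. The engine of the proof is a compactness statement about curves of bounded length, which is precisely the lower-bound-type control that your upper-bound approach cannot supply.
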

\begin{proof}
Suppose the contrary, that is, for any $k\in \N$, there exists $y_k\in B_{\tilde{d}_{n_k}}(x,r)$ such that $\tilde{d}_\infty(x,y_{n_k}) >k$. Let $c_{n_k}$ be a shortest
path between $x$ and $y_{n_k}$ for $\tilde{d}_{n_k}$. Then
we have
$$L_{\tilde{d}_{n_k}}(\bold{p}(c_{n_k}))=L_{\tilde{d}_{n_k}}(c_{n_k})=\tilde{d}_{n_k}(x,y_{n_k}) <r $$
so  by Lemma~\ref{lem:con curves} there is a curve $c$ between $x$ and $y$, where $y_{n_k}\to y$,  up to extract a subsequence,
and
$$\tilde{d}_\infty(x,y)\leq L_{d_\infty}(\bold{p}(c)) \leq \liminf_{n_k} L_{d_{n_k}}(\bold{p}(c_{n_k}) \liminf_{n_k}  \tilde{d}_{n_k}(x,y_{n_k})<r~, $$
that is a contradiction.
\end{proof}

\begin{lemma}\label{lem:ratcliffe}
Let $d$ be a distance on the compact manifold $M$. There exists $l>0$ such that for any $\gamma\in (\pi_1M\setminus \{0\})$, for any $x \in \tilde{M}$, $B_{\tilde{d}}(x,l)$ and $B_{\tilde{d}}(\gamma.x,l)$ are disjoint.
\end{lemma}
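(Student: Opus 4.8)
The plan is to reduce the statement to the fact that $\pi_1M$ acts \emph{freely} on the universal cover $\tilde M$ by deck transformations, using the compactness of $M$ to produce a uniform radius. The point is that a short $\tilde d$-path from $x$ to $\gamma.x$ projects to a small loop on $M$, which must stay inside a single evenly covered chart; lifting that loop then forces $\gamma.x=x$, hence $\gamma$ trivial.

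\textbf{Step 1 (a uniform chart radius).} Every point of $M$ has an open neighbourhood that is evenly covered by $\mathbf{p}$; by compactness choose a finite such cover $U_1,\dots,U_m$. Since $d$ induces the topology of $M$ (our standing convention) and $M$ is compact, this open cover has a Lebesgue number $\delta>0$: every subset of $M$ of $d$-diameter at most $\delta$ lies in some $U_i$. Set $l:=\delta/4$.

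\textbf{Step 2 (contradiction via a short loop).} Suppose some nontrivial $\gamma\in\pi_1M$ and some $x\in\tilde M$ satisfy $B_{\tilde d}(x,l)\cap B_{\tilde d}(\gamma.x,l)\neq\emptyset$; then $\tilde d(x,\gamma.x)<2l$. Since $\tilde d$ is intrinsic (it is induced by the length structure $\tilde{\mathfrak L}$, and $\tilde{\mathfrak L}=L_{\tilde d}$; moreover, by Hopf--Rinow, Theorem~\ref{HR}, $(\tilde M,\tilde d)$ being complete and locally compact, a shortest path exists), there is a path $\tilde c\colon[0,1]\to\tilde M$ from $x$ to $\gamma.x$ with $L_{\tilde d}(\tilde c)<2l$. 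Its projection $c:=\mathbf{p}\circ\tilde c$ is a loop based at $\bar x:=\mathbf{p}(x)=\mathbf{p}(\gamma.x)$ with $L_d(c)=L_{\tilde d}(\tilde c)<2l$ by the definition of $\tilde{\mathfrak L}$. For every $t$ we have $d(\bar x,c(t))\le L_d(c)<2l$, so $c$ is contained in a set of $d$-diameter $\le 4l=\delta$, hence $c\subset U_i$ for some $i$.

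\textbf{Step 3 (lifting forces $\gamma$ trivial).} As $U_i$ is evenly covered and $c\subset U_i$, the lift $\tilde c$ of $c$ starting at $x$ remains in the single sheet $\tilde U$ of $\mathbf{p}^{-1}(U_i)$ containing $x$; hence $\gamma.x=\tilde c(1)\in\tilde U$. But $\mathbf{p}(\gamma.x)=\mathbf{p}(x)$ and $\mathbf{p}$ is injective on $\tilde U$, so $\gamma.x=x$. Since the deck action of $\pi_1M$ on the universal cover is free, $\gamma$ is the identity, contradicting our assumption. Thus $l$ has the required property.

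\textbf{Main obstacle.} The only delicate point is the interplay between the distance $d$ on $M$ (which is not assumed intrinsic) and the intrinsic lifted distance $\tilde d$ on $\tilde M$: one must be sure that a $\tilde d$-ball of radius $l$ really translates into a loop on $M$ confined to one evenly covered chart. This is handled by working with the path realizing $\tilde d(x,\gamma.x)$ (available because $\tilde d$ is intrinsic) and by choosing the Lebesgue number with the factor $4$, which absorbs the passage from ``radius $2l$'' to ``diameter $4l$''. Everything else is standard covering-space theory.
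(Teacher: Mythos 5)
Your proof is correct, but it takes a genuinely different route from the paper's, which is a one-line deferral to a lemma of Ratcliffe on discrete groups of isometries. The Ratcliffe-style argument runs by contradiction through compactness and proper discontinuity: if no uniform $l$ existed, one would produce nontrivial $\gamma_n$ moving points $x_n$ by $\tilde d$-distance tending to $0$; translating the $x_n$ into a compact fundamental domain and extracting then yields a nontrivial deck transformation with a fixed point, which is impossible. You instead stay entirely inside covering-space topology: a finite evenly-covered cover, its Lebesgue number, unique path lifting, and freeness of the deck action. Both approaches are sound; yours is more self-contained since it presupposes nothing about the isometric action of $\pi_1 M$ on $(\tilde M,\tilde d)$ beyond the definition of $\tilde d$. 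One simplification worth making: the appeal to Hopf--Rinow is superfluous. From $\tilde d(x,\gamma.x)<2l$, the very definition of $\tilde d$ as an infimum of $\tilde{\mathfrak{L}}$-lengths of lifted rectifiable curves already yields an admissible curve $\tilde c$ from $x$ to $\gamma.x$ with $\tilde{\mathfrak{L}}(\tilde c)=L_d(c)<2l$ (where $c$ is its projection), which is all your argument uses. Dropping Hopf--Rinow matters here, because the paper only establishes completeness of $(\tilde M,\tilde d)$ under the extra hypothesis that $d$ is intrinsic, a hypothesis the lemma does not impose.
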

\begin{proof}
The proof is formally the same as the one of Lemma~1 p.~237 in \cite{Rat06}.
\end{proof}

\begin{lemma}\label{lem: dans dom find}
Let $M$ be a  compact manifold, and
$d_n,d_\infty$ distances on $M$, such that
 $d_n$ uniformly converge to $d_\infty$.
Let $F$ be  a fundamental domain in $\tilde{M}$ for the action of $\pi_1M$, with compact closure.
There exists $G>0$ and $N>0$ such that, for every element $\gamma\in(\pi_{1}M\setminus{\{0\}})$,  for any $x\in F$,  for every $n>N$,
$$ \tilde{d}_{n}(x,\gamma. x)\geq G~.
$$
\end{lemma}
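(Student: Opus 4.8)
The plan is to argue by contradiction using the compactness results already established, together with the uniform-separation Lemma~\ref{lem:ratcliffe}. Suppose the conclusion fails. Then for every $k\in\N$ we can find an integer $n_k>k$, a nontrivial element $\gamma_k\in\pi_1M$, and a point $x_k\in F$ such that $\tilde d_{n_k}(x_k,\gamma_k.x_k)<1/k$. After passing to a subsequence we may assume $x_k\to x_\infty\in\overline F$ (using compactness of $\overline F$). The idea is that the points $\gamma_k.x_k$ are then $\tilde d_{n_k}$-close to $x_k$, hence (after controlling the discrepancy between $\tilde d_{n_k}$ and $\tilde d_\infty$) they stay in a bounded region of $(\tilde M,\tilde d_\infty)$; since balls are compact for $\tilde d_\infty$, the orbit points $\gamma_k.x_k$ accumulate, which will force infinitely many of the $\gamma_k$ to coincide, and then a fixed nontrivial $\gamma$ would move some point an arbitrarily small $\tilde d_\infty$-distance (or violate Lemma~\ref{lem:ratcliffe}), a contradiction.

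Concretely, first I would fix a relatively compact open set $W\supset \overline F$ in $\tilde M$. By Lemma~\ref{lem boules bornees}, $\bigcup_{n\in\bar\N}B_{\tilde d_n}(x,1)$ is $\tilde d_\infty$-bounded for each $x$; a small covering-compactness argument over $\overline F$ upgrades this to a uniform bound, so there is a $\tilde d_\infty$-bounded (hence, by Hopf--Rinow, relatively compact) set $B$ containing $B_{\tilde d_n}(x,1)$ for every $x\in\overline F$ and every $n\in\bar\N$. In particular $\gamma_k.x_k\in B$ for all $k$. Since only finitely many elements $\gamma$ of $\pi_1M$ satisfy $\gamma.\overline F\cap B\neq\emptyset$ (properness of the deck action, $\overline F$ and $B$ being relatively compact), the sequence $(\gamma_k)$ takes only finitely many values, so after a further subsequence $\gamma_k=\gamma$ is constant and nontrivial.

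Now $\tilde d_{n_k}(x_k,\gamma.x_k)<1/k\to 0$. I would like to conclude $\tilde d_\infty(x_\infty,\gamma.x_\infty)=0$, contradicting that $\gamma$ is a nontrivial deck transformation acting freely. To pass from $\tilde d_{n_k}$ to $\tilde d_\infty$ I use Lemma~\ref{lem: pro iso locale} and Lemma~\ref{lem:upperbound cover}: near each of the (finitely many) relevant points, $\mathrm{\bold p}$ is a common local isometry for all $d_n$ with $n$ large, and more globally $\limsup_n\tilde d_n$ is controlled; combining a chain of such local comparisons along a shortest $\tilde d_{n_k}$-path from $x_k$ to $\gamma.x_k$ (which has length $<1/k$, so stays in a small fixed neighbourhood once $k$ is large) with the triangle inequality for $\tilde d_\infty$ gives $\tilde d_\infty(x_k,\gamma.x_k)\to 0$. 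By continuity of $\tilde d_\infty$ and $x_k\to x_\infty$, this yields $\tilde d_\infty(x_\infty,\gamma.x_\infty)=0$, i.e. $\gamma.x_\infty=x_\infty$, contradicting Lemma~\ref{lem:ratcliffe} (which in particular forbids a nontrivial $\gamma$ from fixing a point). This contradiction proves the lemma, with $G$ any positive number below the separation constant and $N$ the threshold coming from the uniform convergence.

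The main obstacle I expect is the uniform-in-$x\in F$ and uniform-in-$\gamma$ bookkeeping: Lemmas~\ref{lem: pro iso locale}--\ref{lem:upperbound cover} are stated pointwise, so one must first reduce to finitely many base points (via the compactness of $\overline F$ and of the bounded region $B$, and the finiteness of the set of relevant $\gamma$) before the ``$n$ sufficiently large'' thresholds can be chosen uniformly. Once that reduction is in place, the chain-of-local-isometries estimate and the final contradiction are routine; the delicate point is simply making sure all the ``for $n$ large'' quantifiers can be chosen independently of the data we are taking a contradictory sequence of.
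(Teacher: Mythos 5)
Your argument is correct in spirit and takes a genuinely different route from the paper. You argue by contradiction: extract a sequence $(n_k,\gamma_k,x_k)$ violating the bound, use Lemma~\ref{lem boules bornees} plus Hopf--Rinow to trap $\gamma_k.x_k$ in a fixed compact set, invoke properness of the deck action to reduce to a single nontrivial $\gamma$, and then derive $\gamma.x_\infty=x_\infty$ (contradicting free action, or Lemma~\ref{lem:ratcliffe}). The paper instead gives a direct, quantitative proof: it takes the separation constant $l$ from Lemma~\ref{lem:ratcliffe} applied to $\tilde d_\infty$, packs all the balls $B_{\tilde d_n}(x,l)$, $x\in F$, $n\in\bar\N$, into a compact set via Lemma~\ref{lem boules bornees} and a triangle-inequality reduction to a single center $x_1$, then uses Proposition~\ref{prop: conv loc unif} to get $\tilde d_\infty< \tilde d_n+\epsilon$ on that compact set and concludes $\tilde d_n(x,\gamma.x)\ge l-\epsilon$ directly, with an explicit $G$. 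The direct argument avoids properness entirely (the separation is uniform over all $\gamma$ by Lemma~\ref{lem:ratcliffe}), produces an explicit constant, and keeps the ``for $n$ large'' quantifier uniform over $F$ and $\gamma$ in one stroke.

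One place in your write-up that deserves tightening: in the last step you want $\tilde d_\infty(x_k,\gamma.x_k)\to 0$ from $\tilde d_{n_k}(x_k,\gamma.x_k)\to 0$, but you cite Lemma~\ref{lem: pro iso locale} together with Lemma~\ref{lem:upperbound cover}. Lemma~\ref{lem:upperbound cover} only gives $\limsup_n\tilde d_n\le\tilde d_\infty$, which is the wrong direction; and the parenthetical claim that the $\tilde d_{n_k}$-shortest path ``stays in a small fixed neighbourhood once $k$ is large'' needs its own argument (a $\tilde d_{n_k}$-short path need not a priori stay in a $\tilde d_\infty$-small set unless one already controls the two metrics uniformly on a compact set). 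The clean tool for this comparison, in both directions and uniformly on the relevant compact set, is precisely Proposition~\ref{prop: conv loc unif}; invoking it makes your step 4 immediate and removes the circularity. With that substitution your proof is a complete, valid alternative.
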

\begin{proof}
Let $l>0$ given by Lemma~\ref{lem:ratcliffe} applied to $\tilde{d}_\infty$, and let $\epsilon >0$ such that $l-\epsilon >0$.
Let $x_1\in F$ and let $R$ be such that the closure of $F$ is contained
in $B_{\tilde{d}_{\operatorname{sup}}}(x_1,R)$. So for any $n\in \bar \N$, the closure of $F$ is contained
in $B_{\tilde{d}_{n}}(x_1,R)$.
By Lemma~\ref{lem boules bornees},  $\cup \{B_{\tilde{d}_n}(x_1,R+l) | n\in \bar{\N} \}$ is contained in a compact set $K$.
It follows that $\cup \{B_{\tilde{d}_n}(x,l) | x\in F, n\in \bar{\N} \}$ is contained in $K$.

By Proposition~\ref{prop: conv loc unif}, there exists $N$ such that for any $n>N$ and $x,y\in K$,
$\tilde{d}_\infty(x,y) < \tilde{d}_n(x,y)+\epsilon $.

In particular, if $n>N$,  for any $x\in F$, $B_{\tilde{d}_n}(x,l-\epsilon) \subset B_{\tilde{d}_\infty}(x,l)$. As $\pi_1M$ acts by isometries on $(\tilde{M},\tilde{d}_n)$ and  $(\tilde{M},\tilde{d}_\infty)$,
we also have, for any $\gamma\in \pi_1M$,  $B_{\tilde{d}_n}(\gamma.x,l-\epsilon) \subset B_{\tilde{d}_\infty}(\gamma.x,l)$. Lemma~\ref{lem:ratcliffe} then implies that $B_{\tilde{d}_n}(\gamma.x,l-\epsilon)$ and $B_{\tilde{d}_n}(x,l-\epsilon)$ are disjoint if $\gamma\not= 0$,
hence $\tilde{d}_n(x,\gamma.x)> l-\epsilon >0$.
\end{proof}

\begin{corollary}\label{lem:min-length-gamma>K-for-all-n}
Let $M$ be a  compact manifold, and
$d_n,d_\infty$ distances on $M$, such that
 $d_n$ uniformly converge to $d_\infty$.
There exists $G>0$ and $N>0$ such that, for every element $\gamma\in(\pi_{1}M\setminus{\{0\}})$,  for any $x\in\tilde{M}$, for any $n>N$,
$$ \tilde{d}_{n}(x,\gamma. x)\geq G~.
$$
\end{corollary}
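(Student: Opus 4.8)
The plan is to reduce Corollary~\ref{lem:min-length-gamma>K-for-all-n} to Lemma~\ref{lem: dans dom find} by a standard "transfer to the fundamental domain" argument, exploiting that $\pi_1M$ acts by isometries for every $\tilde d_n$, $n\in\bar\N$. First I would fix a fundamental domain $F$ in $\tilde M$ for the action of $\pi_1 M$ with compact closure, and let $G>0$, $N>0$ be the constants produced by Lemma~\ref{lem: dans dom find} for this $F$. The claim is that exactly these constants work for all of $\tilde M$.

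Next, given an arbitrary $x\in\tilde M$, I would use that $F$ is a fundamental domain: there is some $\alpha\in\pi_1M$ with $\alpha.x\in \overline{F}$ (working with the closure, or equivalently with a slightly enlarged fundamental domain, so that every point of $\tilde M$ has a translate in the compact set $\overline F$; one can also simply apply Lemma~\ref{lem: dans dom find} with $\overline F$ in place of $F$, since its proof only uses that $\overline F$ is compact). Set $x' = \alpha.x \in \overline F$. Then for any $\gamma\in\pi_1M\setminus\{0\}$, consider $\gamma' = \alpha\gamma\alpha^{-1}$, which is again a nontrivial element of $\pi_1M$, and compute, using that $\alpha$ acts by $\tilde d_n$-isometries,
\begin{equation*}
\tilde d_n(x,\gamma.x) = \tilde d_n(\alpha.x,\alpha\gamma.x) = \tilde d_n(x',\,\alpha\gamma\alpha^{-1}.x') = \tilde d_n(x',\gamma'.x')~.
\end{equation*}
Since $x'\in\overline F$ and $\gamma'\neq 0$, Lemma~\ref{lem: dans dom find} (applied to the compact set $\overline F$) gives $\tilde d_n(x',\gamma'.x')\geq G$ for every $n>N$, and hence $\tilde d_n(x,\gamma.x)\geq G$ for every $n>N$, which is the desired conclusion.

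I do not expect a serious obstacle here; the only point requiring a little care is the interplay between "fundamental domain" and its closure — one wants every point of $\tilde M$ to have a $\pi_1M$-translate landing in a fixed compact set, which is why invoking Lemma~\ref{lem: dans dom find} with $\overline F$ (whose proof genuinely needs only compactness of the closure) is the cleanest route. Everything else is the conjugation trick together with equivariance of the action by isometries, which holds simultaneously for all $\tilde d_n$, $n\in\bar\N$, by construction of $\tilde d_n$ from the $\pi_1M$-invariant length structures on $\tilde M$.
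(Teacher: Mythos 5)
Your proof is correct and is essentially the same argument as the paper's: choose a fundamental domain $F$ with compact closure, translate $x$ into $\overline F$ by some $\mu\in\pi_1M$, rewrite $\tilde d_n(x,\gamma.x)=\tilde d_n(\mu.x,(\mu\gamma\mu^{-1}).(\mu.x))$ using that $\pi_1M$ acts by $\tilde d_n$-isometries, and then invoke Lemma~\ref{lem: dans dom find}. Your remark about working with $\overline F$ rather than $F$ is a small but legitimate point of care that the paper glosses over.
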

\begin{proof}
Let $F$ be  a fundamental domain in $\tilde{M}$ for the action of $\pi_1M$.
Let $x\in \tilde{M}$. Then there exists a $\mu\in \pi_1M$ such that $\mu. x\in F$.
Now let $\gamma\in(\pi_{1}M\setminus{\{0\}})$.
We have, as $\pi_1M$ acts by isometries on $(\tilde{M},\tilde{d}_n)$:
 $$ \tilde{d}_{n}( x,\gamma.x)=\tilde{d}_{n}(\mu. x,\mu. \gamma.x)
 =\tilde{d}_n(\mu.x, (\mu\gamma\mu^{-1}).(\mu . x))~,$$ and by
Lemma~\ref{lem: dans dom find}, this last quantity is $\geq G$.
\end{proof}

\subsection{Approximation by polyhedral metrics} \label{paragraph:construction-of-polyhedral-metrics}

Recall (see e.g. \cite{BH1999,BBI2001,AKP}) that for any
 triple of points $(x,y,z)$ in an intrinsic metric space $(M,m)$, a \emph{comparison triangle} is a triangle on the Euclidean plane with vertices $(x',y',z')$ such that $m(x,y)=d_{\R^{2}}(x',y')$, $m(y,z)=d_{\R^{2}}(y',z')$, and $m(x,z)=d_{\R^{2}}(x',z')$.
 Using comparison triangles, it is possible to  define a notion of \emph{upper angles} between two geodesic paths in $(M,m)$
%$(S,d)$
starting from the same point
(see I.1.12 in \cite{BH1999}).
Following Proposition~II.1.7 in  \cite{BH1999} (parts ($1$) and ($4$)), we give a definition of CAT($0$) space in the form which is convenient for us.
\begin{definition}\label{def-CAT0-via-angle}
A complete intrinsic metric space $(M,m)$ is CAT(0) if the upper angle between any couple of sides of every geodesic triangle  with distinct vertices is no greater than the angle between the
corresponding sides of its comparison triangle in $\R^2$.
\end{definition}

The CAT($0$) condition implies that the shortest path between two points of $M$ is unique \cite[II 1.4]{BH1999}.

\begin{definition}\label{def: cba}
A complete intrinsic metric space $(M,m)$
is  of \emph{non-positive curvature} (in the Alexandrov sense), if for any $x$ there is $r$ such that $B_m(x,r)$ endowed with the induced (intrinsic) distance is CAT(0).
\end{definition}

Note that many compact metric spaces cannot be CAT($0$) as this condition implies that the space must be simply connected  \cite[II-1.5]{BH1999}.
But by the Cartan--Hadamard theorem, if $M$ is a manifold and $(M,m)$ is of non-positive curvature, then $(\tilde M,\tilde{m})$ is CAT($0$) \cite[II.4.1]{BH1999}.

We will now consider that $M=S$ is a compact surface.
Recall from the introduction that a polyhedral metric of non-positive curvature on $S$ is
 a flat metric on $S$ with conical singularities of negative curvature.
 They admit a geodesic triangulation whose vertices are exactly the singular points \cite{troyanov}, so equivalently they can be defined as a gluing of flat triangles along isometric edges, such that the sum of the angles of
 triangles around each vertex is $>2\pi$.
It follows from \cite[Lemma~II.5.6]{BH1999} that polyhedral metrics of non-positive curvature
are metrics of non-positive curvature in the sense of Definition~\ref{def: cba}.

Let us remind the notion of  \emph{bounded integral curvature} (or just of \emph{bounded curvature} in terms of \cite[Chapter~I, p.~6]{AZ}).
\begin{definition}\label{def-integral-bounded-curvature}
An intrisic distance $m$ on a surface $S$ is said to be of \emph{bounded integral curvature} (in short, BIC) if $(S,m)$ verifies the following property:

\begin{itemize}
  \item \emph{
For every $x\in S$ and every neighbourhood $N_x$ of $x$ homeomorphic to the open disc,  for any  finite system $\{T\}$ of pairwise nonoverlapping \emph{simple} triangles $T$ belonging to $N_x$, the sum of the \emph{excesses}
\begin{equation*}\label{eq:def-excess-triangle}
\delta(T)=\bar{\alpha}_{T}+\bar{\beta}_{T}+\bar{\gamma}_{T}-\pi
\end{equation*}
of the triangles $T\in\{T\}$ with upper angles $(\bar{\alpha}_{T},\bar{\beta}_{T},\bar{\gamma}_{T})$, is bounded from above by a number $C$ depending only on the neighbourhood $N_x$, i.e.
}
\begin{equation*}\label{eq:def-integral-curvature}
\sum_{T\in\{T\}}\delta(T)\leq C~.
\end{equation*}
\end{itemize}
\end{definition}

A simple triangle is a triangle bounding an open set homeomorphic to a disc, and which is \emph{convex relative to the boundary}, i.e. no two points of the boundary of the triangle can be joined by  a curve outside the triangle which is shorter than a suitable part of the boundary joining the points, see \cite{AZ} for more details.

\begin{lemma}
A metric of non-positive curvature on a compact surface is BIC.
\end{lemma}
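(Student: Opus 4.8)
The plan is to show that a metric of non-positive curvature in the sense of Alexandrov (Definition~\ref{def: cba}) satisfies the bounded-integral-curvature condition of Definition~\ref{def-integral-bounded-curvature}. The key observation is that the BIC condition is \emph{local}: for each $x\in S$ we only need to control the total excess of finite systems of pairwise nonoverlapping simple triangles lying in \emph{some} neighbourhood $N_x$. Since $(S,d)$ is of non-positive curvature, we may choose $N_x = B_d(x,r)$ small enough that, equipped with its induced intrinsic distance, it is CAT($0$). Thus it suffices to prove that on a CAT($0$) space the total excess of any finite system of pairwise nonoverlapping simple triangles contained in a fixed simple triangle (or any fixed neighbourhood with compact closure) is bounded above by a constant depending only on that neighbourhood.

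First I would note that in a CAT($0$) space, by Definition~\ref{def-CAT0-via-angle}, every geodesic triangle $T$ with distinct vertices has its upper angles no greater than the corresponding angles of its Euclidean comparison triangle; summing the three inequalities gives
\begin{equation*}
\delta(T) = \bar\alpha_T + \bar\beta_T + \bar\gamma_T - \pi \leq (\alpha' + \beta' + \gamma') - \pi = 0~,
\end{equation*}
where $(\alpha',\beta',\gamma')$ are the angles of the comparison triangle in $\R^2$. In other words, in a CAT($0$) space every simple triangle has nonpositive excess. Hence for any finite system $\{T\}$ of pairwise nonoverlapping simple triangles inside $N_x$,
\begin{equation*}
\sum_{T\in\{T\}} \delta(T) \leq 0 \leq C
\end{equation*}
for, say, $C=0$ (or any nonnegative constant). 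This immediately gives the BIC property at $x$, and since $x$ was arbitrary, $(S,d)$ is BIC.

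The one point that needs a little care, and which I expect to be the main (minor) obstacle, is matching conventions: Definition~\ref{def-integral-bounded-curvature} speaks of \emph{simple} triangles — those bounding a disc and convex relative to the boundary — whereas Definition~\ref{def-CAT0-via-angle} refers to \emph{geodesic} triangles. I would check that a simple triangle in the sense of \cite{AZ}, when the ambient space is CAT($0$) and the triangle lies in a suitably small CAT($0$) neighbourhood, is indeed a geodesic triangle (its sides being the unique shortest paths, which exist and are unique by the CAT($0$) condition), so that the comparison inequality for upper angles applies. One should also observe that the CAT($0$) neighbourhood may be taken with compact closure, so that the upper angles at the (finitely many) vertices are well-defined and the excesses are genuine real numbers. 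Once these identifications are in place, the argument above goes through verbatim and the constant $C$ can even be taken independent of $N_x$, namely $C=0$.
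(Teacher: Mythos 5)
Your proposal is correct and follows essentially the same route as the paper's proof: pass to a CAT($0$) neighbourhood, use the comparison-angle inequality of Definition~\ref{def-CAT0-via-angle} to show each triangle has nonpositive excess, and conclude the total excess is bounded by $C=0$. The only addition is your (reasonable) remark about matching the notion of simple triangle with that of geodesic triangle, which the paper treats implicitly.
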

\begin{proof}
By definition, each point $x$ of a metric of non-positive curvature on a compact surface space has a CAT($0$) neighbourhood $N_x$. Therefore, by Definition~\ref{def-CAT0-via-angle}, for any geodesic triangle $T\subset N_x$, the angles of the comparison triangle $\overline{T}\subset\R^{2}$ are not less than the corresponding upper angles of $T$. As the sum of the angles of a Euclidean triangle is $\pi$, the excess of the triangle $T$ (see Definition~\ref{def-integral-bounded-curvature}) is
\begin{equation*}\label{eq:excess-CAT(0)-triangle}
\delta(T)\leq\delta(\overline{T})=0~.
\end{equation*}
Hence, for any finite system $\{T\}$ of pairwise nonoverlapping simple triangles $T$ in $N_x$, the sum of the excesses is trivially non-positive.
\end{proof}

We want to find a sequence $d_n$ of polyhedral metrics of non-positive curvature on a compact surface $S$ converging to a given distance $d$ of non-positive curvature on
 $S$.

 The main tool is Theorem~10 in \cite[Chapter~III, p.~84]{AZ}. This result adapted to our case is  formulated as follows.
We call a \emph{flat metric} on a surface a distance such that each point has a neighbourhood isometric to a Euclidean cone. The cone has  arbitrary angle at the vertex. If the angle is $2\pi$, the point is regular, and singular otherwise. If the angles at the singular points are $>2\pi$,  a flat metric is a  polyhedral metric of non-positive curvature.

\begin{theorem}\label{thm:az-polyhedral-approximation}
Given a compact BIC surface, there is a sequence of flat metrics converging uniformly to it.
\end{theorem}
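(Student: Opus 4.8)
The plan is to deduce Theorem~\ref{thm:az-polyhedral-approximation} from the cited result of Alexandrov--Zalgaller (Theorem~10 in \cite[Chapter~III]{AZ}), which produces polyhedral approximations for metrics of bounded integral curvature, by checking that their hypotheses cover our situation and by unwinding what ``flat metric'' means here. The key point is that a compact BIC surface in the sense of Definition~\ref{def-integral-bounded-curvature}, with an orientation and smooth topological structure fixed, falls exactly in the class of two-dimensional manifolds of bounded curvature treated in \cite{AZ}, for which Theorem~10 asserts the existence of a sequence of polyhedral (i.e., piecewise Euclidean, equivalently flat with finitely many cone singularities) metrics converging uniformly to the given distance. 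Since we only claim convergence to \emph{flat metrics} in the weak sense (each point having a Euclidean-cone neighbourhood, with no sign condition on the cone angles), this is in fact a slightly weaker statement than what \cite{AZ} gives, so no loss of strength is incurred.

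Concretely, first I would recall that, by Theorem~\ref{thm:az-polyhedral-approximation}'s hypothesis, $(S,d)$ is a compact surface with an intrinsic distance of bounded integral curvature; in particular $d$ is intrinsic, the topology of $d$ is that of $S$, and $(S,d)$ is complete and locally compact, so Hopf--Rinow (Theorem~\ref{HR}) applies. Second, I would invoke the triangulation/approximation machinery of \cite[Chapter~III]{AZ}: a surface of bounded integral curvature admits, for every $\varepsilon>0$, a geodesic triangulation whose triangles are ``small'' and whose vertices can be chosen to include any prescribed finite set, and replacing each geodesic triangle by the Euclidean triangle with the same side lengths produces a polyhedral metric $d_\varepsilon$ on $S$; the control on excesses provided by the BIC condition is precisely what bounds the distortion $\sup_{x,y}|d_\varepsilon(x,y)-d(x,y)|$ and forces it to $0$ as the mesh of the triangulation goes to $0$. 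Third, I would note that the resulting metrics $d_\varepsilon$ are flat metrics in the sense defined just before the theorem statement (each vertex gets a Euclidean cone, each interior point of an edge or face is regular), so extracting a sequence $d_n := d_{1/n}$ yields the desired uniform approximation.

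The main obstacle is genuinely bookkeeping rather than conceptual: one must verify that the version of the Alexandrov--Zalgaller theorem we need is literally contained in \cite[Chapter~III, Theorem~10]{AZ}, including that ``bounded curvature'' there is the same as our Definition~\ref{def-integral-bounded-curvature} (up to the standard equivalence between the excess-sum formulation and the signed-measure ``curvature'' formulation of \cite{AZ}), and that the approximating metrics they construct are indeed polyhedral in our sense and converge \emph{uniformly} (not merely in Gromov--Hausdorff or pointwise) — which is exactly the content of their theorem. Once the dictionary between the two formalisms is in place, the proof is a one-line citation: apply \cite[Chapter~III, Theorem~10]{AZ} to $(S,d)$ and observe that uniform convergence of the polyhedral metrics is what is asserted there.

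\begin{proof}
Since $(S,d)$ is a compact surface carrying an intrinsic distance of bounded integral curvature (Definition~\ref{def-integral-bounded-curvature}), it is a closed surface of bounded curvature in the sense of Alexandrov and Zalgaller. Theorem~$10$ in \cite[Chapter~III, p.~84]{AZ} asserts that such a surface is the uniform limit of a sequence of polyhedral metrics, that is, of flat metrics on $S$ with finitely many conical singularities. In particular each term of the sequence is a flat metric in the sense defined above (every point has a neighbourhood isometric to a Euclidean cone, the cone angle being $2\pi$ except at the finitely many singular points), and the convergence is uniform in the sense of Section~\ref{sec 2}. This is exactly the assertion of the theorem.
\end{proof}
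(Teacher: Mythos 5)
Your proposal is correct and takes exactly the same route as the paper: the theorem is presented there as a reformulation (in the paper's terminology) of Theorem~10 in \cite[Chapter~III, p.~84]{AZ}, with no independent proof given. Your elaboration of the bookkeeping — that BIC matches Alexandrov--Zalgaller's bounded-curvature class, that the approximants obtained by replacing geodesic triangles with Euclidean comparison triangles are flat metrics in the weak (no sign condition) sense, and that the convergence asserted in \cite{AZ} is uniform — is exactly what the paper implicitly relies on, and the construction you describe is the one the paper recalls later when proving Corollary~\ref{thm:approx}.
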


We will also need the following technical result, which corresponds to Theorem~11 in \cite[Chapter~II, p.~47]{AZ}. Here we write it down in a convenient form.
\begin{lemma}\label{thm:az-point-on-geodesic}
Let $p$ be a point on a BIC surface such that there is at least one shortest arc   containing $p$ in its interior. Then for any decomposition of a neighbourhood of
 $p$ into sectors convex relative to the boundary formed by geodesic rays issued from $p$ such that the upper angles between the sides of these sectors exist and do not exceed $\pi$,
the total sum of those angles
is not less than $2\pi$.
\end{lemma}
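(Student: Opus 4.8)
The plan is to establish the inequality $\sum \bar\angle_i \geq 2\pi$ for a decomposition of a neighborhood of $p$ into sectors convex relative to the boundary, using the hypothesis that $p$ lies in the interior of some shortest arc $\sigma$. The underlying intuition is that $\sigma$ "looks straight" at $p$, so the two sides of $\sigma$ at $p$ together span an angle of $2\pi$, and any finer subdivision into sectors can only increase (or preserve) the total. First I would invoke the companion result on upper angles for curves on BIC surfaces (the angle comparison built into Definition~\ref{def-integral-bounded-curvature} and the theory of \cite{AZ}): along the shortest arc $\sigma$ through $p$, the upper angle between the two subarcs of $\sigma$ emanating from $p$ equals $\pi$ on \emph{each} side, because a shortest arc cannot be shortened, so it realizes the extremal angle. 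Thus the neighborhood of $p$ is divided by $\sigma$ into two half-neighborhoods, each subtending upper angle exactly $\pi$ at $p$.

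Next I would compare the given sector decomposition with the bipartition induced by $\sigma$. The key technical point is superadditivity of upper angles with respect to subdivision by geodesic rays: if a sector (convex relative to the boundary) bounded by geodesic rays $r, r'$ at $p$ is further cut by an intermediate geodesic ray $r''$, then $\bar\angle(r,r') \leq \bar\angle(r,r'') + \bar\angle(r'',r')$. On a BIC surface (or more specifically in the CAT-comparison setting) this follows from the definition of upper angle as a $\limsup$ of comparison angles together with the convexity-relative-to-boundary hypothesis, which guarantees that the geodesic rays behave well and that no "shortcutting" outside the sectors occurs; this is precisely the content of the cited Theorem~11 of \cite{AZ}. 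Overlaying the ray $\sigma$ (or rather its two halves) with the given rays $r_1, \dots, r_k$ that bound the sectors, I would refine both decompositions to a common one, apply superadditivity to pass from the $\sigma$-bipartition to the common refinement (total $\geq \pi + \pi = 2\pi$), and then apply it again — in the reverse direction, decreasing — to pass from the common refinement back to the given decomposition. The monotonicity goes the correct way: coarsening via superadditivity \emph{decreases} the sum, so I need to be careful about which decomposition is finer. The correct statement is that the sum over the given sectors is $\geq$ the sum over any coarsening, and the bipartition by $\sigma$ is (after possibly merging the given rays that fall on the same side) a coarsening — giving $\sum_i \bar\angle_i \geq \pi + \pi = 2\pi$.

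The main obstacle I anticipate is the bookkeeping when the rays of $\sigma$ do not coincide with any of the $r_i$: one must insert the two halves of $\sigma$ into the cyclic list of rays, which subdivides two of the given sectors, and then argue that within each such subdivided sector the upper angle is at least the sum of the two pieces — this is superadditivity again, but one must check that the two pieces are themselves sectors convex relative to the boundary (or at least that their upper angles are defined and behave subadditively), which is where the hypothesis that all the relevant upper angles exist and do not exceed $\pi$ is used, together with the convexity hypothesis. Once this overlay argument is set up, the conclusion is immediate. I would present this by: (1) recalling that a shortest arc through $p$ has upper angle $\pi$ on each side at $p$; (2) stating and using the superadditivity/subadditivity of upper angles under refinement by geodesic rays in sectors convex relative to the boundary; (3) performing the overlay and summing. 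The heart of the matter, and the step most likely to require care in the write-up, is (2), which is why the authors isolate it as a quoted result from \cite{AZ} rather than reproving it.
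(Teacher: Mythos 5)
First, note the frame: the paper does not prove this lemma at all. It is stated as a verbatim (reformulated) quotation of Theorem~11 in Chapter~II, p.~47 of \cite{AZ}, and is used as a black box. So there is no ``paper's own proof'' to compare against; what you have written is an attempted reconstruction of the argument that \cite{AZ} gives.

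Your sketch has the right high-level idea (overlay the bipartition by the shortest arc $\sigma$ with the given decomposition, and compare angle sums), but there is a direction error at precisely the step you flag as the obstacle. Superadditivity of upper angles, as you correctly state it, reads $\bar\angle(r,r')\leq\bar\angle(r,r'')+\bar\angle(r'',r')$: the \emph{whole} is at most the \emph{sum of the parts}. When $\sigma$'s two rays are not already among the $r_i$ and you insert them, each of the two affected sectors is cut into two pieces, and you write that ``the upper angle [of the whole sector] is at least the sum of the two pieces --- this is superadditivity again.'' That is the reverse inequality; superadditivity gives exactly the opposite. The inequality you actually need is $\beta_j\geq\beta_{j_1}+\beta_{j_2}$, which is \emph{sub}-additivity (in fact equality), and this does not follow from the comparison-triangle triangle inequality. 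What is really being invoked is the additivity theorem for angles of sectors convex relative to the boundary on BIC surfaces --- a genuine structural theorem of Alexandrov--Zalgaller, not a formal consequence of the definition of upper angle --- and the hypotheses ``convex relative to the boundary'' and ``$\leq\pi$'' are precisely the hypotheses that make that theorem applicable. Your sketch thus smuggles in a deep result under the wrong name and with the wrong sign, which is the one gap that actually has to be closed. (Minor: you also assert that the upper angle on \emph{each side} of $\sigma$ at $p$ equals $\pi$; what a shortest arc gives you directly is that each side subtends angle $\geq\pi$, which is what you use, so this is a harmless imprecision. The easy case, where $\sigma$'s rays already appear among the $r_i$, does go through with superadditivity alone exactly as you argue.)
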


\begin{corollary}\label{thm:approx}
Let $d$ be a distance of non-positive curvature on a compact surface $S$.
Then there exists a sequence of polyhedral metrics of non-positive curvature  $(d_{n})_{n\in\mathbb{N}}$ on $S$ which uniformly converges to $d$.
\end{corollary}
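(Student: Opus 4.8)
The plan is to combine Theorem~\ref{thm:az-polyhedral-approximation} with Lemma~\ref{thm:az-point-on-geodesic} in order to upgrade ``flat metric'' to ``polyhedral metric of non-positive curvature'', that is, to a flat metric all of whose cone angles are $\geq 2\pi$. We know (the lemma just above Definition~\ref{def-integral-bounded-curvature}) that $(S,d)$ is BIC, so Theorem~\ref{thm:az-polyhedral-approximation} immediately furnishes a sequence of flat metrics $(d_n)_{n\in\N}$ on $S$ converging uniformly to $d$. A priori, some of the cone points of the $d_n$ could have angle $<2\pi$ (positive curvature concentrated at a point), which is precisely what we must rule out --- or rather, what we must rule out asymptotically.

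The key step is to show that, for $n$ large, every cone angle of $d_n$ is $\geq 2\pi$. First I would establish that there is a uniform positive lower bound on $\tilde{d}_n$-displacement under nontrivial deck transformations: this is exactly Corollary~\ref{lem:min-length-gamma>K-for-all-n}, which gives $G>0$ and $N$ such that $\tilde d_n(x,\gamma.x)\geq G$ for all $n>N$, all $x\in\tilde M$, all $\gamma\neq 0$. Consequently, for $n>N$, every closed geodesic of $(S,d_n)$ has length $\geq G$, and more to the point, no geodesic loop based at a point of $S$ can be too short; hence a short geodesic arc in $(S,d_n)$ cannot close up, and in particular any point $p$ lies in the interior of some shortest arc of $(S,d_n)$ provided we work at a scale below $G/2$. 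Now fix a cone point $p$ of $d_n$ with $n>N$; since the metric near $p$ is a Euclidean cone, one can cut a neighbourhood of $p$ into finitely many Euclidean sectors by geodesic rays from $p$, each of angle $\leq\pi$, each convex relative to its boundary. Since $p$ is in the interior of a shortest arc of $(S,d_n)$, Lemma~\ref{thm:az-point-on-geodesic} applies and gives that the sum of these sector angles --- which is exactly the cone angle at $p$ --- is $\geq 2\pi$. Therefore, discarding the finitely many terms $d_n$ with $n\leq N$, every $d_n$ is a flat metric with all cone angles $\geq 2\pi$, i.e. a polyhedral metric of non-positive curvature, and the truncated sequence still converges uniformly to $d$.

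The main obstacle I expect is the verification that, for $n$ large, any cone point $p$ genuinely lies in the interior of a shortest arc of $(S,d_n)$ --- this is the hypothesis of Lemma~\ref{thm:az-point-on-geodesic} and it is what connects the ``global'' input (Corollary~\ref{lem:min-length-gamma>K-for-all-n}) to the ``local'' conclusion about cone angles. The point is that a short geodesic segment through $p$ in a flat surface is automatically locally shortest (it is a straight Euclidean segment away from singular points, and through a single cone point of angle $\geq 2\pi$ a straight-through segment remains locally minimizing), but one must make sure it is globally shortest between its endpoints, which could fail only if there were a competing short path, and that is controlled once the systole-type quantity $G$ is positive and we pick the segment of length $<G/2$. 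A small technical care is also needed because Lemma~\ref{thm:az-point-on-geodesic} is stated for BIC surfaces and we are applying it to the flat metrics $d_n$, which are BIC since flat metrics with isolated cone points have locally bounded integral curvature; this is standard and can be cited from \cite{AZ} or deduced as in the lemma preceding Definition~\ref{def-integral-bounded-curvature}. Once these points are in place the corollary follows.
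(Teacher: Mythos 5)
Your plan is to apply Lemma~\ref{thm:az-point-on-geodesic} to the flat metrics $(S,d_n)$ themselves, at their cone points, and to secure the lemma's hypothesis (that the cone point lies in the interior of a shortest arc of $(S,d_n)$) via the systole lower bound from Corollary~\ref{lem:min-length-gamma>K-for-all-n}. This step has a genuine gap, and in fact is circular. In a flat surface with a cone point $p$ of angle $\theta$, a curve passing through $p$ consists, near $p$, of two Euclidean segments meeting at $p$; the two angles they form on either side sum to $\theta$, and the curve is locally length-minimizing only if both of those angles are $\geq\pi$, i.e.\ only if $\theta\geq 2\pi$. Consequently, if $\theta<2\pi$ then \emph{no} shortest arc of $(S,d_n)$ passes through $p$ in its interior, no matter how short a scale you work at --- the systole bound $G$ is irrelevant, since the obstruction is local and has nothing to do with geodesics closing up. So you cannot verify the hypothesis of Lemma~\ref{thm:az-point-on-geodesic} for the metric $d_n$ at a cone point without already knowing its angle is $\geq 2\pi$, which is precisely what is to be proved. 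You even note the caveat ``through a single cone point of angle $\geq 2\pi$'' yourself, but do not notice that this is the conclusion rather than a hypothesis you may assume.

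The paper avoids this by applying Lemma~\ref{thm:az-point-on-geodesic} to the \emph{original} metric $(S,d)$ rather than to $d_n$. It recalls from the Alexandrov--Zalgaller construction behind Theorem~\ref{thm:az-polyhedral-approximation} that the cone points of $d_n$ are exactly the vertices of a geodesic triangulation $\tau_n$ of $(S,d)$, and that each such vertex lies in the interior of a geodesic of $(S,d)$ (this holds because $(S,d)$ is of non-positive curvature, so geodesics extend locally). The lemma then gives that the sum of the sector angles at such a vertex, measured in $(S,d)$, is $\geq 2\pi$. Finally, the CAT(0) comparison of Definition~\ref{def-CAT0-via-angle} shows that the angles of the Euclidean comparison triangles --- which by construction are precisely the sector angles of $d_n$ at that vertex --- are at least the corresponding upper angles in $(S,d)$, so the cone angle of $d_n$ is $\geq 2\pi$ as well. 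This transfer step, bounding the angle in $(S,d)$ and then passing to $d_n$ via comparison triangles, is the idea missing from your proposal; your attempt to get the angle bound intrinsically in $d_n$ cannot work.
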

\begin{proof}

Applying Theorem~\ref{thm:az-polyhedral-approximation}, we obtain a sequence $(d_{n})_{n\in\mathbb{N}}$, of flat metrics converging uniformly to $d$. We have to check that the total angles around the conical singularities of $d_{n}$, ${n\in\mathbb{N}}$, are not less than $2\pi$.

In the proof of Theorem~\ref{thm:az-polyhedral-approximation} the distances $d_n$, ${n\in\mathbb{N}}$, are constructed as follows:
\begin{itemize}
  \item[$1.$] construct a geodesic triangulation $\tau_{n}$ of  $(S,d)$;
  \item[$2.$] replace the interiors of the triangles of $\tau_{n}$ by the interiors of the Euclidean comparison triangles.
\end{itemize}
Note that point $1.$ is far from being trivial. Then, one has to prove that
the finer the triangulation is, the closer $d_n$ is from $d$ (for the uniform distance between metric spaces).

Remark that, by construction (see the proof of Theorem~10 in \cite[Chapter~III, p.~85, lines~3 and~4]{AZ}), every vertex of $\tau_{n}$  lies in the interior of some geodesic in $(S,d)$ (this also follows because $(S,d)$ is of non-positive curvature \cite[II.5.12]{BH1999}). Applying Lemma~\ref{thm:az-point-on-geodesic}, we immediately get that the sum of the sector angles at any vertex $V$ of a triangulation $\tau_{n}$ in $(S,d)$, is not less than $2\pi$.  By Definition~\ref{def-CAT0-via-angle}, the angles of the comparison triangles in $\R^2$ are not less than the corresponding sector angles at $V$ of the triangulation $\tau_{n}$ in $(S,d)$. Hence  the total angle around every singular point of the polyhedral metric $d_{n}$  is not less than $2\pi$.
\end{proof}

%%%%%%%%%%%%%%%%%%%%%%%%%%%%%%%%%%%%%%%%%%%%%%%%%%%%%%%%%%%%
%%%%%%%%%%%%%%%%%%%%%%%%%%%%%%%%%%%%%%%%%%%%%%%%%%%%%%%%%%%%

\section{Convergence of induced distances}\label{sec 3}

The aim of this section is to prove Proposition~\ref{prop:main 3}.

\subsection{Graphs on the hyperboloid}

Let $K$ be a spacelike convex set, and suppose that its boundary
$G_u$ is the graph of a positive function $u$ on $\H^2$, i.e.

\begin{equation}\label{eq Gu}G_u=\{u(x)x | x\in \H^2 \}~. \end{equation}
Note that $G_u\subset I^+(0)$.

\begin{definition}
A function $u:\H^2\to \R$ is \emph{H-convex} if $G_u$ defined by
\eqref{eq Gu} is a spacelike convex surface.
\end{definition}

The function $u:\H^2\to \R$ can be written as
$$u(x)=\mbox{inf}\{\lambda \geq 0 | \lambda x \in K \} $$
so it is the restriction to $\H^2$ of the function $U:I^+(0)\to \R$ defined by
$$U(x)=\mbox{inf}\{\lambda \geq 0 | \lambda x \in K \}~. $$
Let $\mu \geq 0$. As $U(x)x\in \partial K$, then
$\frac{U(x)}{\mu}\mu x \in \partial K$. Also,
$U(\mu x)\mu x \in \partial K$.
As for any $y\in I^+(0)$, there exists exactly one $\lambda >0$ such that $\lambda y \in \partial K$, it follows that $U(\mu x)=\frac{U(x)}{\mu}$, i.e.
$U$ is $(-1)$-homogeneous.

Also, if $x\in K$ and $\lambda \geq 1$, then $\lambda x \in K$, so
 $x\in \partial K$ if and only if $U(x)=1$. If $x\in K$, there exists $\lambda \leq 1$ such that $\lambda x \in \partial K$. In particular, if $x\in K$ and $\lambda \leq 1$,
then $U(\lambda x)=1$ i.e. $U(x)=\lambda\leq 1$, so
\begin{equation*}\label{eq K U}K=\{x | U(x) \leq 1 \}~. \end{equation*}

\begin{lemma}\label{lem: U}
Let $u$ be a H-convex function, and let $U$ be its $(-1)$-homogeneous extension on $I^+(0)$.  The function $-\frac{1}{U}$ is convex.
\end{lemma}
\begin{proof}
By definition of $U$,
\begin{equation}\label{eq def encore K}\frac{1}{U}(z)=\mbox{max}\{t \geq 0 | z\in t K\}~. \end{equation}
Let $x,y\in I^+(0)$. Hence $x \in \frac{1}{U}(x)K$ and $y \in \frac{1}{U}(y)K$,
so, as $K$ is convex, for any $\lambda,\mu$, $\lambda K + \mu K=(\lambda +\mu) K$ \cite[Remark 1.1.1]{schneider}, hence, for $0\leq a \leq 1$,
$$ax + (1-a)y \in \left( a\frac{1}{U_n}(x)+(1-a)\frac{1}{U_n}(y)\right) K $$
and by \eqref{eq def encore K},
$$a\frac{1}{U}(x)+(1-a)\frac{1}{U}(y)\leq \frac{1}{U}(ax+(1-a)y)$$
hence $\frac{1}{U}$ is concave.
\end{proof}

 We will use the strong fact that the function $-\frac{1}{U}$ is convex.

\begin{lemma}\label{lem conv un}
Let $(u_n)_n$ be a sequence of H-convex functions, such that there exists
 $\beta>\alpha>0$ with $\alpha<u_n<\beta$. Up to extract a subsequence, $(u_n)_n$ converges to a H-function $u$, uniformly on compact sets.
\end{lemma}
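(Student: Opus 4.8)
The plan is to reduce the statement to a standard compactness result for convex functions. By Lemma~\ref{lem: U}, if $U_n$ is the $(-1)$-homogeneous extension of $u_n$ to $I^+(0)$, then $-1/U_n$ is convex on $I^+(0)$. First I would record that the hypothesis $\alpha < u_n < \beta$ on $\H^2$ transfers, via $(-1)$-homogeneity, to the bound $1/\beta < 1/U_n < 1/\alpha$ times the "radial parameter" on any compact subset of $I^+(0)$; more concretely, fixing a compact set $C \subset I^+(0)$ one gets a uniform two-sided bound $0 < c_1 \leq -1/U_n \leq c_2 < \infty$ on $C$ that is independent of $n$. Thus $(-1/U_n)_n$ is a sequence of convex functions on the open convex set $I^+(0)$ that is uniformly bounded on compact subsets.

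Next I would invoke the classical fact (see e.g.\ \cite{schneider}) that a sequence of convex functions on an open convex set of $\R^d$ which is uniformly bounded on compact sets is, up to a subsequence, uniformly convergent on compact sets to a convex function; equicontinuity on compact sets is automatic from the uniform bound by the standard Lipschitz estimate for convex functions. Applying this, a subsequence of $(-1/U_n)$ converges locally uniformly to a convex function $f$ on $I^+(0)$. By the uniform lower bound $-1/U_n \leq -1/\alpha \cdot (\text{radial factor}) < 0$ away from $0$, the limit $f$ is bounded away from $0$ on compact sets, so $U := -1/f$ is well defined, positive, and $(-1)$-homogeneous (homogeneity passes to the limit). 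Restricting to $\H^2$ gives a function $u = U|_{\H^2}$ with $\alpha \leq u \leq \beta$, and the locally uniform convergence of $-1/U_n$ together with the bounds yields uniform convergence of $u_n \to u$ on compact subsets of $\H^2$.

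It then remains to check that $u$ is H-convex, i.e.\ that $G_u$ is a spacelike convex surface. Convexity of the set $K = \{\,U \leq 1\,\} = \{x : -1/U(x) \leq -1\}$ is immediate since $-1/U = f$ is convex. The substantive point — and the step I expect to be the main obstacle — is that $G_u$ is \emph{spacelike}, i.e.\ all support planes are spacelike and none is lightlike; equivalently $K$ is a spacelike convex set in the strong sense discussed after the definition of spacelike convex sets, not merely an intersection of future half-spaces bounded by spacelike planes (the closure of $I^+(0)$ being the cautionary example). Here the uniform upper bound $u_n < \beta$ is what saves us: $G_u$ lies below the hyperboloid $\{\beta x : x \in \H^2\}$ (a translate/dilate of $\H^2$, which is uniformly spacelike), and a convex surface lying on the past side of a uniformly spacelike convex surface through each of its points inherits a uniform spacelike bound on its support planes. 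I would make this precise by showing that for each $x_0 \in \H^2$ the value $u(x_0) < \beta$ forces any support plane of $K$ at $u(x_0)x_0$ to separate that point from $0$ while staying below the dilated hyperboloid, hence to have timelike normal; combined with convexity this gives that $G_u = \partial K$ is a spacelike convex surface, so $u$ is H-convex. This completes the proof.
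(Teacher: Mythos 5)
Your proposal follows the same route as the paper: pass to the $(-1)$-homogeneous extension $U_n$, use Lemma~\ref{lem: U} to get convexity of $-1/U_n$, deduce uniform local bounds from $\alpha<u_n<\beta$, extract a locally uniformly convergent subsequence by the standard compactness theorem for convex functions (the paper cites Rockafellar \cite[10.9]{Roc97} where you cite Schneider, but this is the same fact), define $K$, $\partial K$ and $u$ from the limit, and then verify spacelikeness. So the structure is identical; the only substantive point of comparison is the spacelikeness step at the end.

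On that step you diverge from the paper's stated justification, and you are actually right to do so: the bound that rules out a lightlike support plane is the \emph{upper} bound $u\leq\beta$, not the lower bound $u\geq\alpha$ that the paper invokes. If $P=\{x:\langle x,\ell\rangle_-=c\}$ were a lightlike support plane of $K$ with $\ell$ future lightlike and $c<0$, then $u(x)\langle x,\ell\rangle_-\leq c$ for every $x\in\H^2$, hence $u(x)\geq c/\langle x,\ell\rangle_-$, which tends to $+\infty$ as $x$ tends to the ideal point of $\H^2$ in the direction of $\ell$ (where $\langle x,\ell\rangle_-\to 0^-$); this contradicts $u\leq\beta$. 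The lower bound alone does not suffice: $\overline{I^+(p)}$ for $p\in I^+(0)$ has radial function bounded below away from $0$ yet has lightlike support planes (and unbounded $u$), so the paper's parenthetical ``because $u\geq\alpha$'' appears to be a slip. However, your own phrasing of the spacelikeness argument should be tightened: ``for each $x_0\in\H^2$ the value $u(x_0)<\beta$ forces any support plane of $K$ at $u(x_0)x_0$ to \dots have timelike normal'' reads as a purely local claim about a single point, and it is not true locally -- a support plane at $u(x_0)x_0$ with $u(x_0)<\beta$ could still be lightlike if $u$ grew without bound elsewhere. The contradiction is genuinely global: it comes from sending $x$ to the ideal direction of the putative lightlike normal while using the uniform bound $u\leq\beta$ over all of $\H^2$. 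With that correction your plan is sound.
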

\begin{proof}
Let $X\in W \subset I^+(0)$, where $W$ is a compact set that does not touch $\partial I^+(0)$. By $(-1)$-homogeneity of the $U_n$,
$$U_n(X)=(-\langle X,X\rangle_-)^{-1/2}u_n\left( \frac{X}{(-\langle X,X\rangle_-)^{1/2}}\right)~, $$
so as for any $x\in \H^2$, the sequence $(u_n(x))_n$ is bounded, and as $W\cap\partial I^+(0)=\emptyset$,
the sequence $(-\frac{1}{U_n}(X))_n$ is bounded, and
by standard property of convex functions,
\cite[10.9]{Roc97}, there is a function $U$ such that, up to extract a subsequence,
$(-\frac{1}{U_n})_n$ converges uniformly on each compact subsets of the interior of $W$ to a convex function $-\frac{1}{U}$. We do this for any compact neighborhood of points of $I^+(0)$ that do not touch $\partial I^+(0)$. Hence we obtain a convex function $-\frac{1}{U}:I^+(0)\rightarrow\R$. The set
$$K=\{x | -\frac{1}{U(x)} \leq -1 \}$$
is convex, and, as  $U$ is clearly $(-1)$-homogeneous,
$$ \partial K=\{x | -\frac{1}{U(x)} = -1 \}=\{x| U(x)=1 \}$$
is a convex surface, and it is the graph of the restriction $u$ of $U$ on $\H^2$. Obviously, $\alpha\leq u\leq\beta$.

Support planes of $\partial K$ are limits of support planes of
$\partial K_n$, hence spacelike or lightlike.
Let us suppose that $\partial K$ has a lightlike support plane $P$.
Then $\partial K$ is on one side of $P$, but meets all the hyperboloids centred at zero. This contradicts the fact that $\partial K$ is a graph above such an hyperboloid (because $u\geq\alpha$).
\end{proof}

\begin{lemma}\label{lem:equilip}
The functions $u_n$ and $u$ of Lemma~\ref{lem conv un} are equi-Lipschitz on any compact set of $\H^2$.
\end{lemma}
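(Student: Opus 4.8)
The plan is to reduce the equi-Lipschitz bound for the $u_n$ on $\H^2$ to the well-known fact that a family of convex functions, locally uniformly bounded, is locally uniformly Lipschitz. Concretely, fix a compact set $C\subset\H^2$. Choose a slightly larger compact set $C'\subset\H^2$ whose interior contains $C$, and let $W\subset I^+(0)$ be the (compact) ``cone slice'' $W=\{tx : x\in C',\ t\in[\alpha/2,2\beta]\}$; since $C'$ is compact in $\H^2$, the set $W$ stays away from $\partial I^+(0)$, and for $x\in C$ the ray through $x$ passes through the interior of $W$. By Lemma~\ref{lem: U} (applied to each $u_n$ and to $u$) the functions $-1/U_n$ and $-1/U$ are convex on $I^+(0)$, and by the hypothesis $\alpha<u_n<\beta$ together with $(-1)$-homogeneity they are uniformly bounded on $W$ (the same bound for all $n$ and for the limit). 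A standard estimate for convex functions (e.g. \cite[Theorem~10.6]{Roc97}) then gives a constant $\Lambda$, depending only on $W$ and on the uniform bound, such that every $-1/U_n$ and $-1/U$ is $\Lambda$-Lipschitz on a neighbourhood of the compact set $\{x : x\in C\}\subset\operatorname{int}W$ (with respect to the ambient Euclidean metric of $\R^3$).

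Next I would transfer this Euclidean-Lipschitz bound on $-1/U_n$ back to a Lipschitz bound for $u_n$ on $C$. Since $u_n=U_n|_{\H^2}$ and $U_n>\alpha>0$ on $W$, we have $U_n=-1/(-1/U_n)$, and on the region where $-1/U_n$ takes values in $[-1/\alpha,-1/\beta]$ the map $s\mapsto -1/s$ is smooth with derivative bounded by $1/\alpha^2$; hence $U_n$ is $(\Lambda/\alpha^2)$-Lipschitz on that neighbourhood of $C$ in $\R^3$. Restricting to $\H^2$, whose induced Riemannian metric is comparable to the Euclidean metric of $\R^3$ on the compact set $C$ (say with bi-Lipschitz constant $\kappa_C$), we conclude that $u_n$ is $(\kappa_C\Lambda/\alpha^2)$-Lipschitz on $C$ for the intrinsic distance of $\H^2$, with a constant independent of $n$; the same argument applies verbatim to $u$. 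This is exactly the asserted equi-Lipschitz property.

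The only mild subtlety — and the place I would be most careful — is bookkeeping the domains: one must ensure that the neighbourhood of $\{x\in C\}$ on which the convex-function Lipschitz estimate is valid genuinely contains an open Euclidean neighbourhood of the compact set $C\subset\H^2$, so that restricting to $\H^2$ and comparing intrinsic and extrinsic metrics is legitimate. This is handled by taking $C'$ strictly larger than $C$ and the radial interval $[\alpha/2,2\beta]$ strictly larger than $[\alpha,\beta]$, so that $C$ sits in the interior of $W$ with a definite margin; then the classical interior Lipschitz estimate for convex functions applies on a fixed smaller compact set containing an Euclidean neighbourhood of $C$. Everything else — the $(-1)$-homogeneity computation already recorded before Lemma~\ref{lem: U}, the convexity of $-1/U_n$ from Lemma~\ref{lem: U}, and the uniform bounds $\alpha<u_n<\beta$ from the hypothesis of Lemma~\ref{lem conv un} — is quoted directly.
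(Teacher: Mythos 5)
Your proof is correct and follows essentially the same route as the paper: both rely on Lemma~\ref{lem: U} to get the convexity of $-1/U_n$, invoke the standard Rockafellar interior Lipschitz estimate for convex functions to get a uniform Euclidean Lipschitz bound, convert back to $u_n$ using the bounds $\alpha<u_n<\beta$, and finally use the local equivalence of the Euclidean and hyperbolic metrics on a compact subset of $\H^2$. The paper simply writes the Lipschitz constant directly in terms of $\max u_n$ and $\min u_n$ without spelling out the cone slice $W$, but the underlying argument is the same.
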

\begin{proof}
If $U_n$ and $U$ are the $(-1)$ homogeneous extensions of $u_n$ and $u$ respectively, we know by Lemma~\ref{lem: U}
that $-1/U_n$ and $-1/U$ are convex.

As $H_n=-1/U_n$ is convex
on $I^+(0)$, then, for a compact set $C\subset \H^2$, there exists a $\epsilon >0$ such that for all $x,y\in C$,  \cite[10.4]{Roc97}
(here $\|\cdot\|$ is the Euclidean norm on $\R^3$)

$$|H_n(x)-H_n(y)|\leq \frac{\operatorname{max}_C H_n -\operatorname{min}_C H_n}{\epsilon} \|x-y\| $$
that leads to

$$|u_n(x)-u_n(y)| \leq \frac{\operatorname{max} u_n}{\epsilon}\left(\frac{\operatorname{max} u_n}{\operatorname{min} u_n}-1\right)\| x-y\| $$
 and as $\alpha<u_n<\beta$,  the $(u_n)_n$ are equi-Lipschitz on $C\cap\H^2$, for the distance on $\H^2$ induced by the ambient Euclidean one. But all the norms coming from Riemannian structures are locally equivalent on $\H^2$.
\end{proof}

\subsection{Convergence of the length structures}

This part is a straightforward adaptation of classical results from the Euclidean setting \cite{alex}.
Let $u$ be a H-convex function.
Let  $c:[a,b]\rightarrow \H^2$ be a Lipschitz curve.
Let $v=(u\circ c)c$.
When the derivative exists,
 $$v'=(u\circ c)'c+u(c)c'$$
 but $c'(s)$ belongs to $T_{c(s)}\H^2$, which is orthogonal to
 $c(s)$, and $\langle c(s),c(s)\rangle_-=-1$, and
  $$\langle c'(s),c'(s)\rangle_-^{1/2}=\|c'\|_{\H^2}$$ is the norm induced by
  the hyperbolic metric. So
 \begin{equation*}
\langle v',v'\rangle_-=u^2(c)\|c'\|_{\H^2}^2 -((u\circ c)')^2
\end{equation*}
and we define the following length
\begin{equation}\label{eq:length}
\mathfrak{L}_{u}(c)=\int_a^b \left(u^2(c)\|c'\|_{\H^2}^2 - ((u\circ c)')^2 \right)^{1/2}~.
\end{equation}

Note that as $\H^2$ is a smooth hypersurface,
$\mathfrak{L}_1=L_{d_{\H^2}}$ on the set of Lipschitz curves, see e.g. \cite{bur}.
The following is immediate.

\begin{lemma}\label{lem: comp hyp met1}
Let $u$ be a H-convex function and $\beta>0$ with $u < \beta$.
For any Lipschitz curve $c$ on $\H^2$,
$$ \mathfrak{L}_{u}(c)\leq \beta L_{d_{\H^2}}(c)~.$$
\end{lemma}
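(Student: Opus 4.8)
This is a pointwise estimate under the integral sign, so the proof should be essentially one line. The plan is to fix a Lipschitz curve $c:[a,b]\to\H^2$ and to work at a parameter value $s$ where $c$ (hence $u\circ c$) is differentiable, which is the case for almost every $s$ since $c$ is Lipschitz and $u$ is locally Lipschitz on $\H^2$ (Lemma~\ref{lem:equilip} applies to the relevant compact piece of $\H^2$, or one simply uses that $u$, being $\mathrm{H}$-convex, is locally Lipschitz). At such an $s$ the integrand of $\mathfrak{L}_u$ in \eqref{eq:length} is the square root of $u^2(c(s))\|c'(s)\|_{\H^2}^2-((u\circ c)'(s))^2$, a quantity which is $\geq 0$ precisely because it equals $\langle v'(s),v'(s)\rangle_-$ with $v=(u\circ c)c$ a curve on the spacelike surface $G_u$, so $v'(s)$ is a spacelike vector.

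The key step is then the trivial bound
\begin{equation*}
u^2(c(s))\,\|c'(s)\|_{\H^2}^2-\big((u\circ c)'(s)\big)^2\;\leq\; u^2(c(s))\,\|c'(s)\|_{\H^2}^2\;\leq\;\beta^2\,\|c'(s)\|_{\H^2}^2,
\end{equation*}
where the first inequality drops the nonnegative term $((u\circ c)'(s))^2$ and the second uses $0<u<\beta$. Taking square roots gives, for almost every $s$,
\begin{equation*}
\big(u^2(c(s))\,\|c'(s)\|_{\H^2}^2-((u\circ c)'(s))^2\big)^{1/2}\;\leq\;\beta\,\|c'(s)\|_{\H^2}.
\end{equation*}

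Finally I would integrate this inequality over $[a,b]$ to obtain $\mathfrak{L}_u(c)\leq\beta\int_a^b\|c'\|_{\H^2}$, and identify the right-hand side using the fact recalled just before the statement, namely that on Lipschitz curves $\mathfrak{L}_1=L_{d_{\H^2}}$ (since $\H^2$ is a smooth hypersurface), so that $\int_a^b\|c'\|_{\H^2}=\mathfrak{L}_1(c)=L_{d_{\H^2}}(c)$. This yields $\mathfrak{L}_u(c)\leq\beta\,L_{d_{\H^2}}(c)$. There is no real obstacle here; the only point deserving a word is the measurability/a.e.-differentiability justification needed to pass from the pointwise bound to the integral bound, which is immediate from Lipschitz regularity of $c$ and local Lipschitz regularity of $u$.
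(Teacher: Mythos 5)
Your argument is exactly the one the paper has in mind: the authors simply remark ``The following is immediate'' before stating the lemma, and the immediacy is precisely your pointwise bound $\big(u^2(c)\|c'\|_{\H^2}^2-((u\circ c)')^2\big)^{1/2}\leq u(c)\|c'\|_{\H^2}\leq\beta\|c'\|_{\H^2}$ followed by integration and the identification $\mathfrak{L}_1=L_{d_{\H^2}}$ recalled just above. Your extra care about almost-everywhere differentiability and measurability is correct but not a departure from the paper's (implicit) proof.
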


Let $u_n$ be H-convex functions converging to a H-convex function $u$.
As the $u_n$ are Lipschitz on $\H^2$, by Rademacher theorem, they are
differentiable almost everywhere (for the Borel measure given by the hyperbolic metric). As there is a countable number of
$u_n$,  there exists a set $\mathcal{D} \subset \H^2$
of zero measure, such that
 the $u_n$ and $u$ are
differentiable on $\H^2\setminus \mathcal{D}$.

Let $c:I\to \H^2$ be a Lipschitz curve. The subset $c^{-1}(\mathcal{D})$ may be a set of non-zero measure in $I$. However, $u\circ c$ is a Lipschitz function on $I$, hence it is derivable almost everywhere on $I$, and moreover, all the $u_n\circ c$
are simultaneously derivable almost everywhere.
To illustrate, let us consider the example of the union of two halfplanes meeting along a line.
It is the graph of a function $f$, which is differentiable everywhere except on the
projection $c$ of the edge onto the plane. But the restriction of $f$ to $c$ is derivable everywhere on $c$.

\begin{lemma} \label{lem: con der} Let $u_n$ be H-convex functions converging to a H-convex function $u$.
For almost all $t\in I$, $u_n$ and $u$ are derivable and at such a point $t$, up to extract a subsequence,
$u_n(c(t))' \to u(c(t))'$.
\end{lemma}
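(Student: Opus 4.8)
The strategy is to reduce the statement to a one-dimensional fact about convex functions. Recall from Lemma~\ref{lem: U} that the $(-1)$-homogeneous extensions $U_n$ of the $u_n$ and $U$ of $u$ have the property that $H_n = -1/U_n$ and $H = -1/U$ are convex on $I^+(0)$, and from Lemma~\ref{lem conv un} that $H_n \to H$ uniformly on compact subsets of $I^+(0)$. The key classical input is the following: if a sequence of finite convex functions on an open convex subset of $\R^k$ converges pointwise (equivalently, locally uniformly) to a convex function, then their gradients converge at every point where the limit is differentiable; see \cite[Theorem~25.7]{Roc97}. I would first record this and deduce that at every point $p\in \H^2\setminus\mathcal D$ one has $\nabla H_n(p)\to\nabla H(p)$ in $\R^3$, hence also $\nabla U_n(p)\to\nabla U(p)$ (since $U_n = -1/H_n$, $\alpha < u_n < \beta$, and $H_n,H$ are bounded away from $0$ on compact sets, the map $H\mapsto -1/H$ is $C^1$ with locally uniformly controlled derivative). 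Restricting to $\H^2$, this gives $\nabla^{\H^2} u_n(p) \to \nabla^{\H^2} u(p)$ for every $p\in\H^2\setminus\mathcal D$.

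Next I would pass from pointwise-on-$\H^2$ convergence of gradients to convergence of the derivative along the curve $c$. Fix $t\in I$ such that $c$ is differentiable at $t$ and such that $u_n\circ c$ and $u\circ c$ are all differentiable at $t$ (the set of such $t$ has full measure, as explained in the paragraph preceding the lemma). At such a $t$, the chain rule applies whenever $c(t)\notin\mathcal D$, giving $(u_n\circ c)'(t) = \langle \nabla^{\H^2}u_n(c(t)), c'(t)\rangle_{\H^2}$ and similarly for $u$; combined with the gradient convergence above this yields $(u_n\circ c)'(t)\to (u\circ c)'(t)$ directly, with no need to extract a subsequence. The only remaining case is $c(t)\in\mathcal D$. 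Here one cannot invoke the chain rule, but one can argue as follows: by equi-Lipschitzness of the $u_n$ on a compact neighbourhood $C$ of $c(t)$ in $\H^2$ (Lemma~\ref{lem:equilip}), with a common Lipschitz constant $\Lambda$, the incremental quotients $\bigl(u_n(c(t+s)) - u_n(c(t))\bigr)/s$ are bounded by $\Lambda\cdot\|c'(t)\|_{\H^2} + o(1)$ uniformly in $n$ for small $s$; hence the sequence $\bigl((u_n\circ c)'(t)\bigr)_n$ is bounded, and after extracting a subsequence it converges to some limit $\ell$. To identify $\ell = (u\circ c)'(t)$ I would use that the functions $t\mapsto (u_n\circ c)(t)$ converge uniformly on $I$ to $(u\circ c)(t)$ (from uniform convergence $u_n\to u$ on the compact set $c(I)$) while being equi-Lipschitz, so they converge in $C^0$ with equi-Lipschitz derivatives; combined with the fact that at the full-measure set of "good" parameters the derivatives already converge to $(u\circ c)'$, a standard argument (e.g. integrating and using dominated convergence, or lower semicontinuity of total variation) forces $\ell=(u\circ c)'(t)$ at the exceptional $t$ as well. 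This is why the statement is phrased with "up to extract a subsequence".

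I expect the main obstacle to be precisely the exceptional set $\mathcal D$: at points $c(t)\in\mathcal D$ the chain rule fails, and one is genuinely using the curve-specific fact (discussed in the excerpt via the two-halfplanes example) that $u_n\circ c$ and $u\circ c$ remain differentiable at $t$ even though the $u_n$ and $u$ need not be differentiable at $c(t)$. Making the identification $\ell = (u\circ c)'(t)$ rigorous requires care, since pointwise convergence of a function sequence does not in general imply pointwise convergence of derivatives — it is the equi-Lipschitz bound from Lemma~\ref{lem:equilip} that saves the day, by upgrading $C^0$-convergence of $u_n\circ c$ to a situation where the derivatives converge a.e.\ and are uniformly bounded, after which weak-$*$ compactness in $L^\infty(I)$ and the a.e.\ identification of the limit pin down the value almost everywhere; the subsequence extraction absorbs the remaining indeterminacy at the measure-zero set of bad parameters. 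Everything else is routine application of \cite{Roc97} and the chain rule.
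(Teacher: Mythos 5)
Your Rockafellar route through $H_n=-1/U_n$ correctly treats the parameters $t$ with $c(t)\notin\mathcal D$, and is a clean alternative to the paper's computation for that case. The genuine gap is at the exceptional $t$ with $c(t)\in\mathcal D$, which the paper explicitly warns can form a set of positive measure in $I$ (the two-halfplanes example). You extract a subsequential limit $\ell$ of $(u_n\circ c)'(t)$ from equi-Lipschitzness and then assert that $\ell=(u\circ c)'(t)$ follows from uniform convergence of $u_n\circ c$, equi-Lipschitzness, and a.e.\ convergence of derivatives ``by a standard argument.'' No such argument exists from those ingredients alone. Take a compactly supported Lipschitz bump $\phi$ with $\phi(0)=0$ and $\phi'(0)=1$, and set $g_n(s)=\tfrac1n\phi(ns)$, $g\equiv 0$: then $g_n\to g$ uniformly, the $g_n$ are equi-Lipschitz, $g_n'(s)\to g'(s)=0$ for every $s\neq 0$, yet $g_n'(0)=1$ for all $n$. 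Weak-$*$ compactness in $L^\infty$, dominated convergence and lower semicontinuity of total variation are all blind to a single parameter value and cannot rule this out. The convexity you exploited in the good case is destroyed by restricting along the (non-linear) curve $c$ and along $\H^2$, and your sketch invokes nothing to replace it at the bad $t$.

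What actually closes the gap, and what the paper's proof does, is to restore one-dimensional convexity inside the ambient cone. Working in the Minkowski $2$-plane $P=\operatorname{span}(c(t),c'(t))$, differentiability of $u_n\circ c$ at $t$ (not of $u_n$ at $c(t)$) is exactly the statement that the boundary of the planar convex set $K_n\cap P$ has a unique tangent direction $V_n=(u_n\circ c)'(t)\,c(t)+u_n(c(t))\,c'(t)$ at the point $u_n(c(t))c(t)$; the in-plane Lorentz-normal $N_n=u_n(c(t))\,c(t)+(u_n\circ c)'(t)\,c'(t)$ is an outer normal of $K_n\cap P$ there. Passing to the limit in the support inequality $\langle N_n,\,u_n(c(t))c(t)-u_n(y)y\rangle_-\ge 0$ for $y\in\H^2\cap P$ shows the (bounded, subsequentially convergent) $N_n$ tend to an outer normal $N$ of $K\cap P$ at $u(c(t))c(t)$; differentiability of $u\circ c$ at $t$ makes that outer normal unique up to positive scale, and matching the $c(t)$-components fixes the scale and gives $(u_n\circ c)'(t)\to(u\circ c)'(t)$ — with no reference whatsoever to whether $c(t)\in\mathcal D$. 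If you prefer to keep your Rockafellar phrasing, the fix is to restrict $H_n$ to the straight line $\tau\mapsto c(t)+\tau\,c'(t)$ inside $P\cap I^+(0)$: the functions $h_n(\tau)=H_n(c(t)+\tau c'(t))$ are genuinely convex in one variable and converge pointwise to $h=H(c(t)+\cdot\,c'(t))$; relate $(u_n\circ c)'(t)$ to $h_n'(0)$ via $1$-homogeneity of $H_n$ and $c(t+s)=\cosh(s)c(t)+\sinh(s)c'(t)+o(s)$, and then the one-variable gradient-convergence theorem applies, using differentiability of $h$ at $0$ (equivalently of $u\circ c$ at $t$) as the hypothesis rather than differentiability of $u$ at $c(t)$.
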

\begin{proof}
Let us denote by $X$ the unit timelike vector $c(t)$ and
by $Y$ the unit spacelike vector $c'(t)$. As $c$ is a curve on $\H^2$, $\langle X,Y \rangle_-=0$.
The tangent vector of the curve $(u_n\circ c)c$ is
$$V_n=(u_n(c(t)))'X+u_n(c(t))Y $$
and in the plane $P$ spanned by $X$ and $Y$, the vector
$$N_n=u_n(c(t))X+(u_n(c(t)))'Y $$
is orthogonal to $V_n$ for $\langle\cdot,\cdot\rangle_-$.
It follows from Lemma~\ref{lem:equilip} and the fact that the $u_n$ are uniformly bounded,  that
the Euclidean norms of $N_n$ are uniformly bounded.
Hence, up to extract a subsequence, $(N_n)_n$ converges to a vector $N$. Note that
$N$ is non-zero, otherwise $\langle N_n,X\rangle_-=-u_n(c(t))$ would converge to $0$, that is impossible since $0<\alpha< u_n <\beta$.

Let $A_n$ be the intersection of the convex set $K_n$ defined by $u_n$  and
the plane $P$. The set $A_n$ is a convex set, and  $V_n$ is a  tangent vector, hence, for any $y \in \H^2\cap P$,
$$\langle N_n, u_n(c(t))X - u_n(y)y \rangle_- \geq 0 $$
and passing to the limit,
$$\langle N, u(c(t))X - u(y)y \rangle_- \geq 0 $$
that says that $N$ is a normal vector of $A$ (the intersection of $K$ with $P$),
in particular,  $N$ is orthogonal  to
$$V=(u(c(t)))'X+u(c(t))Y~. $$
It follows that  there exists $\lambda$ such that
$$\lambda N=u(c(t))X+(u(c(t)))'Y$$
but as $\langle N_n,X\rangle_-$ converges to
$\langle N,X\rangle_-$ and $u_n(c(t))$ converges to
$u(c(t))$, then $\lambda=1$ and $(u_n(c(t)))'$ must converge to $(u(c(t)))'$.
\end{proof}

The preceding result and the Dominated convergence theorem give the following.

\begin{proposition}\label{prop:conv leng}
For any Lipschitz curve $c:[a,b]\to \R$ on $\H^2$, if $u_n\to u$, then, up to extract a subsequence, $\mathfrak{L}_{u_n}(c)\to \mathfrak{L}_u(c)$.
\end{proposition}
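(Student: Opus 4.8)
The plan is to reduce $\mathfrak{L}_{u_n}(c)\to\mathfrak{L}_u(c)$ to an application of the Dominated Convergence Theorem on the interval $[a,b]$, using the explicit integral formula \eqref{eq:length}. Write
$$f_n(t) = \left(u_n^2(c(t))\,\|c'(t)\|_{\H^2}^2 - \big((u_n\circ c)'(t)\big)^2\right)^{1/2}, \qquad f(t) = \left(u^2(c(t))\,\|c'(t)\|_{\H^2}^2 - \big((u\circ c)'(t)\big)^2\right)^{1/2},$$
so that $\mathfrak{L}_{u_n}(c)=\int_a^b f_n$ and $\mathfrak{L}_u(c)=\int_a^b f$. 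The two things to check are pointwise a.e. convergence $f_n(t)\to f(t)$ and a uniform integrable dominating bound.

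First I would address pointwise convergence. By the discussion preceding Lemma~\ref{lem: con der}, there is a full-measure set of $t\in[a,b]$ at which $c$ is differentiable and all the functions $u_n\circ c$ and $u\circ c$ are differentiable simultaneously. Fix such a $t$. Lemma~\ref{lem: con der} gives that, along a subsequence, $(u_n\circ c)'(t)\to (u\circ c)'(t)$; combined with $u_n(c(t))\to u(c(t))$ (uniform-on-compacts convergence from Lemma~\ref{lem conv un}) and the fact that $x\mapsto x^{1/2}$ is continuous on $[0,\infty)$, this yields $f_n(t)\to f(t)$ along that subsequence. (Note that the argument under the square root is $\langle v_n', v_n'\rangle_-\ge 0$ since $G_{u_n}$ is spacelike, so $f_n$ is genuinely real-valued a.e.)

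Second, for the dominating bound, Lemma~\ref{lem:equilip} gives that the $u_n$ are equi-Lipschitz on the compact set $c([a,b])\subset\H^2$, say with constant $\Lambda$; hence $|(u_n\circ c)'(t)|\le \Lambda\|c'(t)\|_{\H^2}$ for a.e.\ $t$. Therefore
$$0 \le f_n(t)^2 \le u_n^2(c(t))\,\|c'(t)\|_{\H^2}^2 \le \beta^2\,\|c'(t)\|_{\H^2}^2,$$
so $f_n(t)\le \beta\,\|c'(t)\|_{\H^2}$ for a.e.\ $t$, and $t\mapsto \beta\|c'(t)\|_{\H^2}$ is integrable on $[a,b]$ because $c$ is Lipschitz. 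The Dominated Convergence Theorem then gives $\int_a^b f_n \to \int_a^b f$ along the subsequence, which is the claim.

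The only subtlety --- and the one place care is needed --- is the bookkeeping of subsequences: Lemma~\ref{lem: con der} extracts a subsequence for each fixed $t$, whereas DCT needs one subsequence working for almost every $t$ at once. This is handled by a standard argument: having fixed \emph{one} subsequence along which $u_n\to u$ uniformly on compacts (Lemma~\ref{lem conv un}) and along which we wish to prove $\mathfrak{L}_{u_n}(c)\to\mathfrak{L}_u(c)$, it suffices to show every \emph{further} subsequence has a sub-subsequence along which the integrals converge to $\mathfrak{L}_u(c)$. Along any further subsequence, reapply Lemma~\ref{lem: con der} to get a sub-subsequence with $(u_n\circ c)'(t)\to(u\circ c)'(t)$ a.e.; along it the above DCT argument applies verbatim. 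Since the limit $\mathfrak{L}_u(c)$ is the same for every such sub-subsequence, the original subsequence itself converges, and Proposition~\ref{prop:conv leng} follows.
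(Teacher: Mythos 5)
Your proof is correct and takes exactly the approach the paper uses: the paper's own justification is a single line citing Lemma~\ref{lem: con der} and the Dominated Convergence Theorem, and you have simply spelled out the details --- the a.e.\ pointwise convergence of the integrands and the dominating bound $f_n \leq \beta\,\|c'\|_{\H^2}$ (which in fact only needs $u_n<\beta$, not the equi-Lipschitz estimate you invoke). Your subsequence bookkeeping at the end is a reasonable precaution, though it is ultimately unnecessary: the proof of Lemma~\ref{lem: con der} actually shows that every convergent subsequence of the normal vectors $N_n$ has the same limit, so $(u_n\circ c)'(t)\to (u\circ c)'(t)$ along the full sequence at each point of joint differentiability, and DCT applies directly.
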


\subsection{Estimates for the induced distances}

In the preceding section, we defined a length structure
$\mathfrak{L}_u$. Let $d_u$ be the distance
defined by this length structure.
To be more precise, for the moment $d_u$ is only a pseudo-distance. As we noticed in Section~\ref{sec length}, the pseudo-distance $d_u$ is intrinsic.
We have a first bound, immediate from Lemma~\ref{lem: comp hyp met1}.

\begin{lemma}\label{lem: comp hyp met12}
If $u\leq \beta$, then $d_{u} \leq \beta d_{\H^ 2}~.$
\end{lemma}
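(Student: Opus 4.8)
The plan is to deduce the estimate directly from the length comparison of Lemma~\ref{lem: comp hyp met1} by passing to infima over curves. Recall that $d_u$ is by definition the (pseudo-)distance associated with the length structure $\mathfrak{L}_u$, so for any two points $x,y\in\H^2$,
$$d_u(x,y)=\inf_c \mathfrak{L}_u(c)~,$$
the infimum being taken over all Lipschitz curves $c$ on $\H^2$ joining $x$ and $y$ (this is precisely the set of admissible curves for $\mathfrak{L}_u$, and it is non-empty). By Lemma~\ref{lem: comp hyp met1}, every such curve satisfies $\mathfrak{L}_u(c)\le \beta\, L_{d_{\H^2}}(c)$, so
$$d_u(x,y)\le \beta \inf_c L_{d_{\H^2}}(c)~.$$

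It then remains to observe that this last infimum equals $d_{\H^2}(x,y)$. Since $\mathfrak{L}_1=L_{d_{\H^2}}$ on the set of Lipschitz curves (noted right after \eqref{eq:length}), $\inf_c L_{d_{\H^2}}(c)$ is the infimum of the hyperbolic lengths of Lipschitz curves from $x$ to $y$; as the hyperbolic distance $d_{\H^2}$ is intrinsic and is realized by a minimizing geodesic, which is smooth and hence Lipschitz, this infimum is exactly $d_{\H^2}(x,y)$. Combining the two displays yields $d_u(x,y)\le\beta\, d_{\H^2}(x,y)$.

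There is no genuine obstacle in this argument; the only point that deserves a word is the restriction of competitor curves to the common class of Lipschitz curves when comparing the two infima, which is legitimate precisely because the hyperbolic distance between two points of $\H^2$ is attained along a geodesic and that geodesic is Lipschitz.
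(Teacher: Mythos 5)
Your proof is correct and is exactly the argument the paper has in mind when it calls the lemma ``immediate from Lemma~\ref{lem: comp hyp met1}'': take infima over Lipschitz curves in the inequality $\mathfrak{L}_u(c)\le\beta L_{d_{\H^2}}(c)$, and note that the infimum of hyperbolic lengths is $d_{\H^2}$ since the minimizing geodesic is an admissible (Lipschitz) competitor. No gap; same approach.
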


\begin{lemma}\label{eq:principale}
Let $u$ be a  H-convex function such that $d_u$ is a complete distance with Lipschitz shortest paths. Let $\alpha>0$ with $u > \alpha$.  Then

\begin{equation*}d_{\H^2}(x,y)\leq \frac{1}{\alpha} d_{u}(x,y) +
\frac{1}{\alpha^2}\int_0^{d_{u}(x,y)}\sqrt{\langle \nu(t),\nu'(t)\rangle_{-}^2}\operatorname{d}t~,\end{equation*}
where $\nu$ is an arc-length parametrized shortest path on the graph of $u$ between $u(x)x$ and $u(y)y$.
\end{lemma}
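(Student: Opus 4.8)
The plan is to push everything down onto the radial projection $c$ of $\nu$ to $\H^2$ and to use the elementary inequality $\sqrt{1+s^2}\le 1+|s|$ together with the lower bound $u>\alpha$. Write $\nu(t)=r(t)c(t)$ for $t\in[0,L]$, where $L=d_u(x,y)$, with $c(t)=\nu(t)/(-\langle\nu(t),\nu(t)\rangle_-)^{1/2}\in\H^2$ and $r(t)=(-\langle\nu(t),\nu(t)\rangle_-)^{1/2}=u(c(t))$. Since $\nu$ is Lipschitz (for the ambient Euclidean metric) and $r\ge\alpha>0$, the functions $r$ and $c$ are Lipschitz on $[0,L]$; hence $\nu,r,c$ are differentiable almost everywhere. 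As in Section~\ref{sec length}, even though $u$ need not be differentiable along $c$, the composition $u\circ c=r$ is Lipschitz on $[0,L]$ and therefore differentiable a.e. by Rademacher's theorem. Differentiating $\langle c,c\rangle_-\equiv-1$ gives $\langle c,c'\rangle_-=0$ a.e., and since $c'$ is tangent to $\H^2$ one has $\langle c',c'\rangle_-=\|c'\|_{\H^2}^2$.

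Next, differentiate $\nu=rc$ to get $\nu'=r'c+rc'$ a.e., and compute, using the two orthogonality relations,
\begin{equation*}
\langle\nu',\nu'\rangle_-=r^2\|c'\|_{\H^2}^2-(r')^2,\qquad \langle\nu,\nu'\rangle_-=-rr'~.
\end{equation*}
Because $\nu$ is parametrized by Minkowski arc length, $\langle\nu'(t),\nu'(t)\rangle_-=1$ a.e., so $r^2\|c'\|_{\H^2}^2=1+(r')^2$ and hence
\begin{equation*}
\|c'\|_{\H^2}=\frac{\sqrt{1+(r')^2}}{r}\le\frac{1+|r'|}{r}\le\frac{1}{\alpha}+\frac{|r'|}{\alpha}~,
\end{equation*}
where the last inequality uses $r=u(c(t))>\alpha$. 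Using $r>\alpha$ once more, $\dfrac{|r'|}{\alpha}\le\dfrac{r|r'|}{\alpha^2}=\dfrac{1}{\alpha^2}\sqrt{\langle\nu,\nu'\rangle_-^2}$.

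Finally, $c$ is a Lipschitz curve on $\H^2$ joining $x$ to $y$, so $d_{\H^2}(x,y)\le L_{d_{\H^2}}(c)=\int_0^L\|c'(t)\|_{\H^2}\,\mathrm dt$, and combining the two bounds above and integrating,
\begin{equation*}
d_{\H^2}(x,y)\le\int_0^L\Bigl(\frac{1}{\alpha}+\frac{1}{\alpha^2}\sqrt{\langle\nu(t),\nu'(t)\rangle_-^2}\Bigr)\mathrm dt=\frac{1}{\alpha}d_u(x,y)+\frac{1}{\alpha^2}\int_0^{d_u(x,y)}\sqrt{\langle\nu(t),\nu'(t)\rangle_-^2}\,\mathrm dt~,
\end{equation*}
which is the assertion. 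The only genuinely delicate point is the a.e. differentiation along the merely Lipschitz curve $\nu$: one must justify that $r=u\circ c$, $c$ and $\nu$ are simultaneously differentiable a.e. and that the product rule $\nu'=r'c+rc'$ and the identities for $\langle\nu',\nu'\rangle_-$, $\langle\nu,\nu'\rangle_-$ hold a.e.; this is the same mechanism already used before Lemma~\ref{lem: con der}, and the completeness of $d_u$ with Lipschitz shortest paths is exactly what guarantees the existence of the Lipschitz curve $\nu$ to which it is applied.
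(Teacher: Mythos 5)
Your proof is correct and takes essentially the same approach as the paper: both project $\nu$ radially onto $\H^2$, express the hyperbolic speed of the projected curve via $\langle\nu,\nu\rangle_-$ and $\langle\nu,\nu'\rangle_-$, apply $\sqrt{a^2+b^2}\le a+b$ to split it into two terms, and bound each using $u>\alpha$ and the arc-length normalization. Writing $\nu=rc$ with $r=u\circ c$ and differentiating is just a more explicit way of carrying out the paper's "straightforward computation" of $\|(\pr\circ\nu)'\|_{\H^2}$, and your extra care about a.e.\ differentiability along the Lipschitz curve is a reasonable elaboration rather than a deviation.
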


\begin{proof}
Let $\pr(y)=\frac{y}{\sqrt{-\langle y,y\rangle_-}}$ be the radial projection from the future cone of the origin of Minkowski space onto $\H^2$.
By definition of the length, we have that
\begin{equation*}
d_{\H^2}(x,y)  \leq L_{d_{\H^2}}(\pr\circ \nu)
\end{equation*}
where
$$L_{d_{\H^2}}(\pr\circ \nu) :=  \int_0^{d_{u}(x,y)} \|(\pr\circ \nu)'(t)\|_{\H^2}\operatorname{d}t= \int_0^{d_{u}(x,y)} \sqrt{\langle (\pr\circ \nu)'(t),(\pr\circ \nu)'(t)\rangle_{-}} \operatorname{d}t~.$$
A straightforward computation gives
$$L_{d_{\H^2}}(\pr\circ \nu)=\int_0^{d_{u}(x,y)} \sqrt{\frac{\langle \nu'(t),\nu'(t)\rangle_{-}}{-\langle \nu(t),\nu(t)\rangle_{-}}+\frac{\langle \nu(t),\nu'(t)\rangle_{-}^2}{\langle \nu(t),\nu(t)\rangle_{-}^2}}\operatorname{d}t~,$$
so, as  $\sqrt{a^2+b^2}\leq a+b$ for $a>0$ and $b>0$,
\begin{equation*}
d_{\H^2}(x,y) \leq\int_{0}^{d_{u}(x,y)}\sqrt{\frac{\langle \nu'(t),\nu'(t)\rangle_{-}}{-\langle \nu(t),\nu(t)\rangle_{-}}}\operatorname{d}t+\int_{0}^{d_{u}(x,y)}\sqrt{\frac{\langle \nu(t),\nu'(t)\rangle_{-}^2}{\langle \nu(t),\nu(t)\rangle_{-}^2}}\operatorname{d}t~.
\end{equation*}

For every $y\in\H^2$ we have $\langle y,y\rangle_-=-1$, also, for each $t$ there is $x\in\H^2$ such that $\nu(t)=u(x)x$, and $u>\alpha$ by assumption, therefore $-\langle \nu(t),\nu(t)\rangle_-\geq \alpha^2$ for all $t$, and so
$$
d_{\H^2}(x,y) \leq
\frac{1}{\alpha}
\int_{0}^{d_{u}(x,y)}\sqrt{\langle \nu'(t),\nu'(t)\rangle_{-}}\operatorname{d}t+\frac{1}{\alpha^2}\int_{0}^{d_{u}(x,y)}\sqrt{\langle \nu(t),\nu'(t)\rangle_{-}^2}\operatorname{d}t~.
$$
As $\int_{0}^{d_{u}(x,y)}\sqrt{\langle \nu'(t),\nu'(t)\rangle_{-}}\operatorname{d}t=d_{u}(x,y)$, we obtain the result.

\end{proof}

\begin{lemma}\label{lem tech}
Let $u$ be a H-convex function, with $\alpha,\beta>0$ such that $\alpha <u<\beta$,  and let $\nu$
be a path on $G_u$, and $t$ such that  $\nu'(t)$ exists. Suppose that $\nu$ is parametrized such that $\nu'(t)$ has unit norm.
Then $$\langle \nu(t),\nu'(t)\rangle_{-}^2\leq  \beta^2-\alpha^2~. $$
\end{lemma}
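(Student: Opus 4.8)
The plan is to compute directly in the $2$-dimensional Minkowski plane $P$ spanned by $\nu(t)$ and $\nu'(t)$, exploiting that $G_u$ is a spacelike convex surface contained between the hyperboloids $\{-\langle x,x\rangle_- = \alpha^2\}$ and $\{-\langle x,x\rangle_- = \beta^2\}$. Write $X = \nu(t)$ and $Y = \nu'(t)$, so that $\langle X,X\rangle_- \in [-\beta^2,-\alpha^2]$ (since $\nu(t) = u(x)x$ with $\alpha < u(x) < \beta$ and $\langle x,x\rangle_- = -1$) and $\langle Y,Y\rangle_- = 1$ by the unit-norm parametrization. Consider the function $f(t) = \langle \nu(t),\nu(t)\rangle_-$ along the curve; then $f'(t) = 2\langle \nu(t),\nu'(t)\rangle_- = 2\langle X,Y\rangle_-$, so controlling $\langle X,Y\rangle_-$ amounts to controlling the derivative of $-\langle \nu,\nu\rangle_-$, which is a function bounded between $\alpha^2$ and $\beta^2$.

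First I would decompose $X$ in an orthonormal-type basis adapted to $Y$. Since $Y$ is spacelike and unit, pick a unit timelike vector $T$ in $P$ orthogonal to $Y$, so $\{T,Y\}$ is an orthonormal basis of $P$ with $\langle T,T\rangle_- = -1$, $\langle Y,Y\rangle_- = 1$, $\langle T,Y\rangle_- = 0$. Writing $X = aT + bY$ we get $\langle X,X\rangle_- = -a^2 + b^2$ and $\langle X,Y\rangle_- = b$. The constraint is $\alpha^2 \le a^2 - b^2 \le \beta^2$, i.e. $b^2 \le a^2 - \alpha^2$ and (more to the point) $b^2 = a^2 - (a^2 - b^2) \le a^2 - \alpha^2$. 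This alone does not bound $b$, so the convexity/spacelike-support-plane hypothesis must enter: it forces $\nu'(t)$ to be a tangent vector to a convex surface whose support planes are spacelike, which constrains the "slope" $b/|a|$ of $X$ relative to $Y$. Concretely, the tangent plane to $G_u$ at $\nu(t)$ is spacelike, and $Y$ lies in it; a spacelike plane through $\nu(t)$ cannot be "too close to lightlike" precisely because $G_u$ stays above the hyperboloid $-\langle\cdot,\cdot\rangle_- = \alpha^2$ and below $-\langle\cdot,\cdot\rangle_- = \beta^2$. I would make this quantitative by noting that the support plane at $\nu(t)$, being spacelike, separates $G_u$ from the origin, and since the nearest and farthest points of $G_u$ from the origin (in the Minkowski quadratic form) lie at levels $\alpha^2$ and $\beta^2$, an elementary planar estimate in $P$ yields $b^2 = \langle X,Y\rangle_-^2 \le \beta^2 - \alpha^2$.

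The cleanest route, which I expect to be the main work, is the following planar argument. In $P$, the curve $s \mapsto \nu(s)$ near $t$ lies on one side of the line $\ell = \nu(t) + \R Y$ (the trace of the support plane), on the side of the origin's complement; equivalently $\langle N, \nu(s) - \nu(t)\rangle_- \ge 0$ where $N$ is the future unit timelike normal to $\ell$ in $P$. Decompose $\nu(t) = \langle\text{component along }N\rangle\, N + \langle\text{component along }Y\rangle\, Y$; the $N$-component has Minkowski-squared-norm at least $\alpha^2$ (as $\nu(t)$ is on the far side and $\ell$ separates it from the origin by at least the "$\alpha$-distance"), while $\langle \nu(t),\nu(t)\rangle_- = -(\text{that component})^2 + \langle X,Y\rangle_-^2 \ge -\beta^2$. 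Rearranging, $\langle X,Y\rangle_-^2 \le \beta^2 - (\text{component})^2 \le \beta^2 - \alpha^2$, which is exactly the claim. The main obstacle is justifying rigorously that the $N$-component of $\nu(t)$ has squared-norm at least $\alpha^2$ — that is, that the spacelike support plane at $\nu(t)$ is "at Minkowski-distance at least $\alpha$" from the origin; this is where one uses that $G_u$ is the graph of a function $u > \alpha$ on $\H^2$, so $\alpha \H^2$ is entirely in the past of (or on) every support plane of $G_u$, giving the needed lower bound on the $N$-component. Everything else is a two-line computation with the reversed Cauchy--Schwarz philosophy in the Minkowski plane $P$.
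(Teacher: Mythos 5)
Your setup is sensible and your decomposition $X = \nu(t) = cN + bY$, with $N$ the future unit timelike normal to the support plane at $\nu(t)$ and $Y = \nu'(t)$, is actually a cleaner framing than the paper's (which parametrizes the unit spacelike vectors in the $2$-plane and locates a maximizer $v$ of $\langle\nu(t),\cdot\rangle_-^2$ on a boundary cone tangent to $\beta\H^2$, then does the tangency computation). However, there is a genuine sign error at the final step, and the inequality you need on the $N$-component is the opposite of the one you establish.

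Concretely: you derive (i) $c^2 \ge \alpha^2$ and (ii) $-c^2 + b^2 = \langle\nu(t),\nu(t)\rangle_- \ge -\beta^2$, and then ``rearrange'' to $b^2 \le \beta^2 - c^2 \le \beta^2 - \alpha^2$. But (ii) rearranges to $b^2 \ge c^2 - \beta^2$, a lower bound, not an upper bound; combined with (i) this gives only $b^2 \ge \alpha^2 - \beta^2$, which is vacuous. The correct pair of facts is $\langle\nu(t),\nu(t)\rangle_- \le -\alpha^2$ (immediate from $u > \alpha$), giving $b^2 \le c^2 - \alpha^2$, together with $c^2 \le \beta^2$, which then yields $b^2 \le \beta^2 - \alpha^2$. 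So the roles of $\alpha$ and $\beta$ are exactly swapped in your argument. Moreover, the geometric justification you give for the bound on $c$ is the wrong one: you assert that ``$\alpha\H^2$ is entirely in the past of (or on) every support plane of $G_u$'', which is false --- for a fixed spacelike plane, points of $\alpha\H^2$ far from the normal direction will lie in its future. What is true and what is needed is the opposite-side fact about $\beta\H^2$: since $u < \beta$, the whole hyperboloid $\beta\H^2$ lies inside the future-convex body $K$, hence on the future side of the support plane $\ell$; evaluating the plane equation $\langle N,\cdot\rangle_- = -c$ at $\beta N \in \beta\H^2$ gives $-\beta \le -c$, i.e. $c \le \beta$. (This is precisely the observation the paper makes --- the support plane lies in the past of the parallel tangent plane to $\beta\H^2$.) Once you make that swap, your two-line decomposition argument goes through and is indeed somewhat more elementary than the paper's monotonicity-plus-tangency computation; but as written the proposal does not prove the lemma.
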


  \begin{proof}

Let $C$ be the closure of the cone of all the
lines %planes
containing the point $\nu(t)$ which meet $\beta\H^2$.
Let $C^0$ be the closure of the complement of $C$.

Note that $\nu'(t)$  lies in a spacelike plane $P$ which is a support plane of $G_{u}$. As this surface is in the past of $\beta \H^2$, the plane $P$ is in the past of the plane tangent to
$\beta\H^2$ which is parallel to $P$.
In particular, the line from $\nu(t)$ and directed by $\nu'(t)$ never crosses $\beta\H^2$, i.e. is in $C^0$.

Let $I(\nu(t))$ be the isotropic cone at $\nu(t)$. Note that $I(\nu(t))\subset C$.

Let $Q$ be the Minkowski plane containing $\nu(t)$ and
$\nu'(t)$, and let $dS$ be the connected component of the set of unit spacelike vectors in $Q$ centered at $\nu(t)$ that contains $\nu'(t)$, see Figure~2. Let $\varpi$ be a vector of $dS$ such that $\langle \nu(t),\varpi\rangle_- = 0$. The function $f=\langle \nu(t),\cdot\rangle_{-}^{2}$ defined on $dS$ is non-negative and increases monotonically to $+\infty$ when the argument moves along $dS$ from the point $\varpi$ and approaches either the future or the past component of the isotropic cone $I(\nu(t))$.
So the restriction of $f$ to $C^0\cap dS$ attains its maximal value at a vector $v$ of $\partial C^0\cap Q$.
In particular, $\langle \nu(t),\nu'(t)\rangle_{-}^{2}$ is bounded from above by the positive quantity $\langle \nu(t),v\rangle_{-}^{2}$.

By definition,
the line from the point $\nu(t)$ directed by the vector $v$ is tangent to $\beta\H^2$. Such a vector $v$ is defined by the fact that there exists $s\in \R$ satisfying
\[
\left \{
\begin{array}{c @{=} c}
    \langle \nu(t)+sv,\nu(t)+sv\rangle_- & -\beta^2 \\
\langle v,\nu(t)+sv\rangle_-     & 0 \\
\end{array}
\right.
\]
that gives
$\langle \nu(t),v\rangle_-^2=\langle \nu(t),\nu(t)\rangle_-+\beta^2 $.
As $\nu(t)$ is in the future of $\alpha\H^2$,
$$\langle \nu(t),\nu(t)\rangle_-\leq -\alpha^2~.$$ At the end,
$$\langle \nu(t),\nu'(t)\rangle_{-}^2\leq \langle \nu(t),v\rangle_-^2 \leq  \beta^2-\alpha^2~. $$

 \end{proof}

\begin{figure}[h]\begin{center}
\includegraphics[scale=0.20]{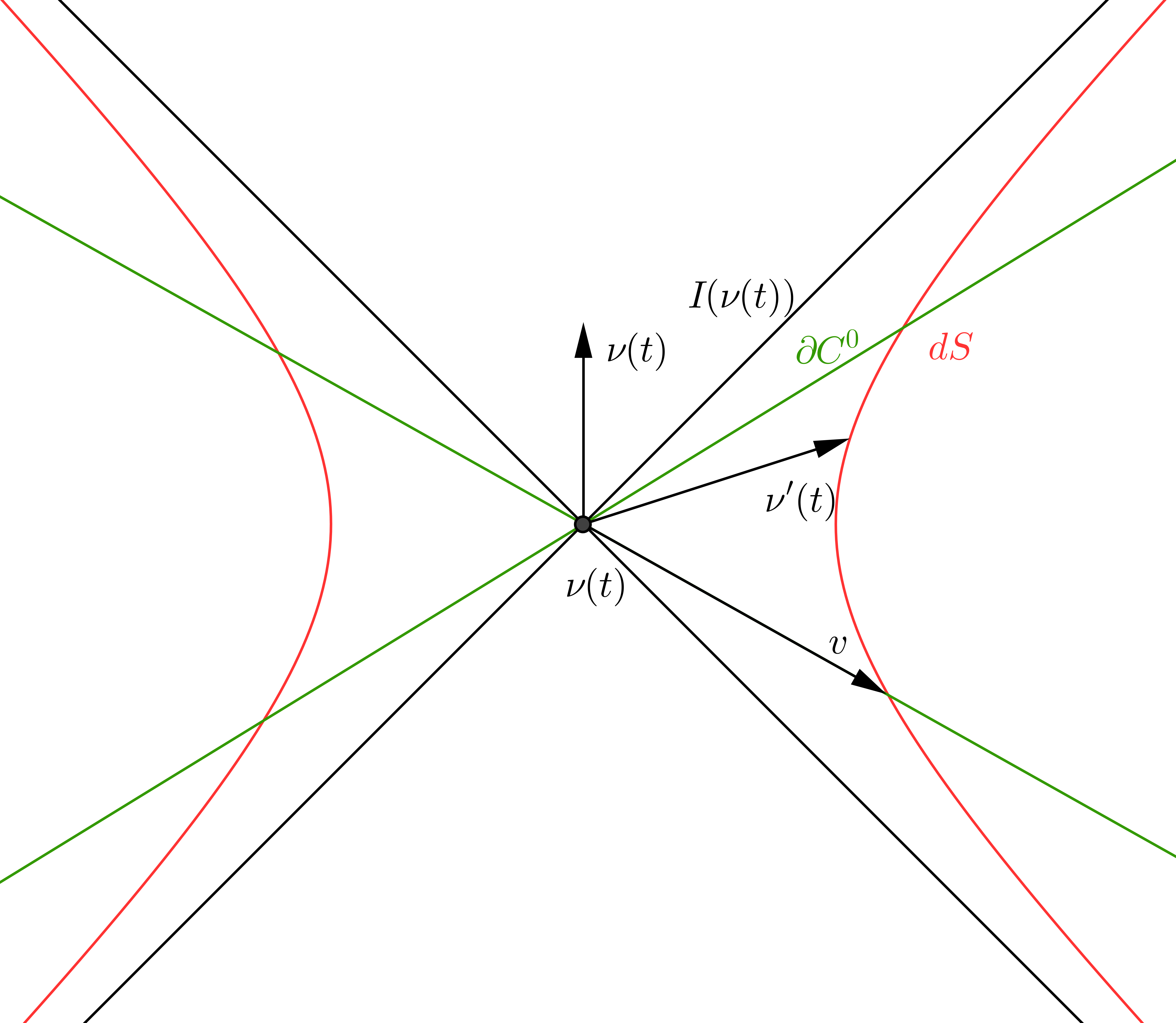}
\end{center}\label{fig3}\caption{To the proof of Lemma~\ref{lem tech}}\end{figure}

\begin{corollary}
\label{lem:upper hyp dist}
Let $u$ be a  H-convex function such that $d_u$ is a complete distance with Lipschitz shortest paths.  Then
$d_u$ is bi-Lipschitz equivalent to $d_{\H^2}$. More precisely, if $\alpha,\beta>0$ are such that
$\alpha < u <\beta$, then
\begin{equation}\label{eq up bound dh}\left(\frac{1}{\alpha}  +
\frac{\sqrt{\beta^2-\alpha^2}}{\alpha^2}\right)^{-1}d_{\H^2}\leq d_{u} \leq \beta d_{\H^2}~. \end{equation}
\end{corollary}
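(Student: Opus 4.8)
The plan is simply to splice together the two estimates already established. The upper inequality $d_u\le\beta d_{\H^2}$ is precisely Lemma~\ref{lem: comp hyp met12}, so nothing remains to be done there. For the lower inequality, fix $x,y\in\H^2$. Since $d_u$ is assumed complete with Lipschitz shortest paths, Hopf--Rinow (Theorem~\ref{HR}) provides a shortest path on $G_u$ between $u(x)x$ and $u(y)y$, and it may be reparametrized by Minkowski arc length; call this arc-length parametrization $\nu\colon[0,d_u(x,y)]\to G_u$. Being Lipschitz, $\nu'$ exists almost everywhere and has unit Minkowski norm there.

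Now apply Lemma~\ref{eq:principale} with this $\nu$:
\begin{equation*}
d_{\H^2}(x,y)\leq \frac{1}{\alpha} d_{u}(x,y) + \frac{1}{\alpha^2}\int_0^{d_{u}(x,y)}\sqrt{\langle \nu(t),\nu'(t)\rangle_{-}^2}\,\operatorname{d}t~.
\end{equation*}
Since $\alpha<u<\beta$ and $\nu$ is parametrized so that $\nu'(t)$ has unit norm wherever it exists, Lemma~\ref{lem tech} gives the pointwise bound $\langle \nu(t),\nu'(t)\rangle_{-}^2\leq \beta^2-\alpha^2$ for almost every $t$. Substituting this into the integral yields
\begin{equation*}
d_{\H^2}(x,y)\leq \left(\frac{1}{\alpha} + \frac{\sqrt{\beta^2-\alpha^2}}{\alpha^2}\right) d_{u}(x,y)~,
\end{equation*}
and rearranging gives the stated left-hand inequality in~\eqref{eq up bound dh}. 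The conjunction of the two inequalities is exactly the asserted bi-Lipschitz equivalence with $d_{\H^2}$; note in passing that this also upgrades $d_u$ from a pseudo-distance to an honest distance, since $d_{\H^2}$ is one.

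There is essentially no obstacle here, the genuine work having been done in Lemmas~\ref{eq:principale} and~\ref{lem tech}; the only point deserving a word is that the hypothesis ``$d_u$ complete with Lipschitz shortest paths'' is exactly what is needed to produce a Lipschitz, arc-length-reparametrizable shortest path $\nu$ on $G_u$, so that Lemma~\ref{lem tech} is applicable almost everywhere along it and the integral estimate above is legitimate.
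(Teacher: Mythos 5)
Your proof is correct and takes exactly the approach the paper intends for this Corollary: the upper bound is Lemma~\ref{lem: comp hyp met12}, and the lower bound follows by substituting the pointwise estimate of Lemma~\ref{lem tech} into the integral in Lemma~\ref{eq:principale} and rearranging. (One small aside: the hypothesis already asserts that $d_u$ is a complete distance, so the concluding remark about "upgrading from a pseudo-distance" is superfluous, though harmless.)
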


\begin{proposition}\label{prop:con mink}
Let $(u_n)_n$ be a sequence of  H-convex functions  such that:
\begin{itemize}
\item $d_{u_n}$ is a complete distance with Lipschitz shortest paths;
\item $\mathfrak{L}_{u_n}=L_{d_{u_n}}$ on the set of Lipschitz curves;
\item there are $\beta>\alpha>0$ such that $\alpha<u_n<\beta$.
\end{itemize}
Then, up to extract a subsequence,
$(u_n)$ converges to a H-convex function $u$, and $(d_{u_n})_n$ converges to $d_u$, uniformly on compact sets.
\end{proposition}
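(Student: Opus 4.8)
The plan is to combine the two convergence mechanisms already established: Lemma~\ref{lem conv un} gives a convergent subsequence of the functions, and Proposition~\ref{prop:conv leng} gives convergence of the length structures along that subsequence. Then I would feed this into the abstract machinery of Section~\ref{sec length}, namely Lemma~\ref{lem:limsup}, to get one inequality, and into the estimates of this section (Corollary~\ref{lem:upper hyp dist} and Lemma~\ref{lem tech}) to get the other, finishing with Lemma~\ref{lem alex trick} to upgrade pointwise convergence to uniform-on-compacts convergence. Since everything happens on $\H^2$, which is a manifold but not compact, I should read ``uniformly on compact sets'' throughout and keep in mind that $d_{\H^2}$ is a complete intrinsic distance to which Hopf--Rinow applies.

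First I would extract a subsequence (not relabeled) so that, by Lemma~\ref{lem conv un}, $u_n\to u$ uniformly on compact sets, with $\alpha\le u\le\beta$, and $u$ is H-convex; by a diagonal argument over a countable exhaustion by compact sets and the fact that there are countably many curves to control, I may also assume by Proposition~\ref{prop:conv leng} that $\mathfrak{L}_{u_n}(c)\to\mathfrak{L}_u(c)$ for every Lipschitz curve $c$ on $\H^2$. The hypothesis that $\mathfrak{L}_{u_n}=L_{d_{u_n}}$ on Lipschitz curves means the $d_{u_n}$ are intrinsic distances in the exact sense needed by Lemma~\ref{lem:limsup}, with $\mathfrak{L}_\infty=\mathfrak{L}_u$ the limiting length structure (whose admissible curves are the Lipschitz curves on $\H^2$). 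Applying Lemma~\ref{lem:limsup} directly yields
\begin{equation*}
\limsup_{n\to\infty} d_{u_n}(x,y)\ \le\ d_u(x,y)
\end{equation*}
for all $x,y\in\H^2$.

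For the reverse inequality $\liminf_n d_{u_n}(x,y)\ge d_u(x,y)$, the idea is to take, for each $n$, a $d_{u_n}$-shortest path $\nu_n$ between the appropriate points (these exist by the completeness-plus-Lipschitz-shortest-paths hypothesis together with Hopf--Rinow), note that by Corollary~\ref{lem:upper hyp dist} the $d_{u_n}$ are uniformly bi-Lipschitz to $d_{\H^2}$ with constants depending only on $\alpha,\beta$, so the $\nu_n$ have uniformly bounded $d_{\H^2}$-length and stay in a fixed compact set; by Lemma~\ref{lem:con curves} (or a direct Arzelà--Ascoli argument using Lemma~\ref{lem:equilip}) a subsequence of the projected curves converges to a limit curve $c$ joining $x$ and $y$, and lower semicontinuity of length together with the convergence $u_n\to u$, $u_n'\to u'$ along $c$ from Lemma~\ref{lem: con der} gives $\mathfrak{L}_u(c)\le\liminf_n\mathfrak{L}_{u_n}(\nu_n)=\liminf_n d_{u_n}(x,y)$, whence $d_u(x,y)\le\liminf_n d_{u_n}(x,y)$. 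Combining the two inequalities gives pointwise convergence $d_{u_n}\to d_u$ on $\H^2$. Finally, since $d_{u_n}\le\beta\, d_{\H^2}$ for all $n$ by Lemma~\ref{lem: comp hyp met12} and likewise $d_u\le\beta\, d_{\H^2}$, the hypothesis of Lemma~\ref{lem alex trick} is met with $d_{\max}=\beta\, d_{\H^2}$ (applied on each compact exhausting piece, or invoking the version of that lemma valid on a locally compact manifold), so the convergence is uniform on compact sets.

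\textbf{Main obstacle.} The delicate point is the reverse inequality: one must control the $d_{u_n}$-shortest paths $\nu_n$ well enough to pass to the limit. The uniform bi-Lipschitz bound from Corollary~\ref{lem:upper hyp dist} is what keeps the $\nu_n$ in a fixed compact set with uniformly bounded length, and the equi-Lipschitz bound of Lemma~\ref{lem:equilip} plus the pointwise derivative convergence of Lemma~\ref{lem: con der} is what lets the integrand in $\mathfrak{L}_{u_n}(\nu_n)$ converge; reconciling ``shortest path for $d_{u_n}$ as a curve on $\H^2$'' with ``Lipschitz curve on which $\mathfrak{L}_{u_n}=L_{d_{u_n}}$ applies'' is where the three standing hypotheses on the $u_n$ are used, and is the step requiring care rather than new ideas.
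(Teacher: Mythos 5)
Your overall plan matches the paper's: extract a subsequence so $u_n\to u$ (Lemma~\ref{lem conv un}), get $\limsup_n d_{u_n}(x,y)\le d_u(x,y)$ from Lemma~\ref{lem:limsup} via Proposition~\ref{prop:conv leng} and the hypothesis $\mathfrak{L}_{u_n}=L_{d_{u_n}}$, get $\liminf$ from a shortest-path/Arzel\`a--Ascoli argument, and finish with Lemma~\ref{lem alex trick}. That skeleton is exactly right, and you correctly flag the $\liminf$ step as the delicate one. But the justification you offer for it does not quite close the gap.

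Concretely, you claim that ``lower semicontinuity of length together with the convergence $u_n\to u$, $u_n'\to u'$ along $c$ from Lemma~\ref{lem: con der} gives $\mathfrak{L}_u(c)\le\liminf_n\mathfrak{L}_{u_n}(\nu_n)$.'' Neither cited ingredient produces this. Lemma~\ref{lem: con der} gives $(u_n\circ c)'\to(u\circ c)'$ along a \emph{fixed} Lipschitz curve $c$; here the curves $\nu_n$ vary with $n$, so that lemma says nothing about the integrand $\big(u_n^2(\nu_n)\|\nu_n'\|^2_{\H^2}-((u_n\circ\nu_n)')^2\big)^{1/2}$. And the lower semicontinuity of $L_d$ is for a \emph{fixed} distance $d$, not for a sequence $d_{u_n}$; the joint semicontinuity $L_{d_\infty}(c)\le\liminf_n L_{d_n}(c_n)$ is Lemma~\ref{lem:con curves}, but that lemma presupposes $d_n\to d_\infty$ uniformly, which is precisely what is being proved, and it is stated for compact $M$. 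So as written, the step $\mathfrak{L}_u(c)\le\liminf_n\mathfrak{L}_{u_n}(\nu_n)$ is not justified.

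The paper avoids joint semicontinuity by decoupling the two limits. After reparametrizing the shortest paths $c_n$ proportionally to hyperbolic arc-length so that they are uniformly Lipschitz maps $[0,1]\to\H^2$ into a fixed compact ball and $c_n\to c_\infty$ uniformly, one compares lengths in two separate moves: (i) using the \emph{uniform} bi-Lipschitz bound of Corollary~\ref{lem:upper hyp dist}, for a fixed partition $(t_1,\dots,t_m)$ and $n$ large, $\sum_i d_{u_n}(c_\infty(t_i),c_\infty(t_{i+1}))\le \sum_i d_{u_n}(c_n(t_i),c_n(t_{i+1}))+\frac{2}{3}\epsilon\le L_{d_{u_n}}(c_n)+\frac{2}{3}\epsilon$, hence $L_{d_{u_n}}(c_\infty)\le L_{d_{u_n}}(c_n)+\frac{2}{3}\epsilon$; (ii) by Proposition~\ref{prop:conv leng} applied to the \emph{fixed} curve $c_\infty$ (together with $\mathfrak{L}_{u_n}=L_{d_{u_n}}$), $\mathfrak{L}_u(c_\infty)\le L_{d_{u_n}}(c_\infty)+\frac{\epsilon}{3}$ for $n$ large. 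Combining, $d_u(x,y)\le\mathfrak{L}_u(c_\infty)\le L_{d_{u_n}}(c_n)+\epsilon=d_{u_n}(x,y)+\epsilon$, which gives the $\liminf$ bound. You should replace your invocation of ``lower semicontinuity plus Lemma~\ref{lem: con der}'' by this two-step comparison: compare $L_{d_{u_n}}$ on the two curves $c_n$ and $c_\infty$ first, then compare $\mathfrak{L}_{u_n}$ and $\mathfrak{L}_u$ on the single limit curve.
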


Note that by \eqref{eq up bound dh},
$d_u$ is a complete distance on $\H^2$.

\begin{proof}
The existence of $u$ follows from Lemma~\ref{lem conv un}. Let $\epsilon >0$, $x,y \in \H^2$ and  $\delta=(t_1,\ldots,t_m)$  a decomposition of $[0,1]$. Let $c_n$ be a shortest path for $d_{u_n}$ between $x$ and $y$.
By Corollary~\ref{lem:upper hyp dist}, there are positive constants $k_1,k_2$ such that
$$L_{k_1d_{\mathbb{H}^2}}(c_n) \leq L_{d_{u_n}}(c_{n})=d_{u_n}(x,y) \leq k_2d_{\mathbb{H}^2}(x,y)$$ so, the $d_{\mathbb{H}^2}$-length of the $c_n$ is bounded from above independently of $n$. Let us also denote by  $c_n$ a reparametrization of $c_n$ defined on $[0,1]$ proportionally to the hyperbolic arc-length. As the endpoints of the curve are fixed, all the curves $c_n$ are contained in a closed ball for the metric $d_{\mathbb{H}^2}$. 
So we can apply Arzela--Ascoli theorem (\cite[1.4.10]{papadop}, \cite[2.5.14]{BBI2001}): up to extract a subsequence,
$(c_n)_n$ uniformly converges to a curve $c_\infty$.  Note that the $c_n$ are equi-Lipschitz, hence $c_\infty$ is Lipschitz.
So,  there is $N_1$ such that if $n\geq N_1$ then, $\forall i\in\{1,\ldots,m\}$,
$$d_{u_n}(c_n(t_i),c_\infty(t_{i}))<k_2d_{\mathbb{H}^2}(c_n(t_i),c_\infty(t_{i}))<\frac{\epsilon}{3m}~, $$
i.e.
$\sum_{i=1}^m d_{u_n}(c_n(t_i),c_\infty(t_{i}))<\frac{\epsilon}{3}~, $
and triangle inequality gives
$$
\sum_{i=1}^m d_{u_n}(c_\infty (t_i),c_\infty (t_{i+1}))\leq \sum_{i=1}^m d_{u_n}(c_n(t_i),c_n(t_{i+1})) + \frac{2}{3}\epsilon~.  $$
Taking the sup for the decomposition:
$$L_{d_{u_n}}(c_\infty) \leq L_{d_{u_n}}(c_n) + \frac{2}{3}\epsilon~. $$

On the other hand, by Proposition~\ref{prop:conv leng} and by hypothesis, after taking a suitable subsequence, there is $N_2$ such that for $n\geq N_2$,
$$\mathfrak{L}_{u}(c_\infty) \leq L_{d_{u_n}}(c_\infty) + \frac{\epsilon}{3}~. $$

Hence, for $n\geq \mbox{max}\{N_1,N_2\}$,
$\mathfrak{L}_{u}(c_\infty) \leq L_{d_{u_n}}(c_n) + \epsilon~, $
so
$\mathfrak{L}_{u}(c_\infty) \leq \liminf_nL_{d_{u_n}}(c_n) + \epsilon  $
and as $\epsilon$ is arbitrary,
$$\mathfrak{L}_{u}(c_\infty) \leq \liminf_nL_{d_{u_n}}(c_n)~.$$

As $L_{d_{u_n}}(c_n)=d_{u_n}(x,y)$ and $d_u(x,y) \leq \mathfrak{L}_{u}(c_\infty)$, we obtain
$$
d_u(x,y)\leq\liminf_{n\rightarrow\infty}d_{u_n}(x,y)~.
$$

This, together with Lemma~\ref{lem:limsup}, leads to the pointwise convergence of $d_{u_n}$ to $d_u$. The local uniform convergence comes from Lemma~\ref{lem alex trick}.
\end{proof}

\subsection{Convergence on the quotient}

This section is a straightforward adaptation of \cite{fiv}.

The topology on the space of representations $\rho:\pi_1S\to O_0(2,1)\cong \mathrm{PSL}(2,\R)$ can be defined as follows. Let us choose a set of $2g$ generators $(\gamma_1,\ldots,\gamma_{2g})$  of $\pi_1S$, where $g$ is the genus of $S$.
One says that $(\rho_n)_n$ converges to $\rho$ if
$(\rho_n(\gamma_1),\ldots,\rho_n(\gamma_{2g}))$
converges to $(\rho(\gamma_1),\ldots,\rho(\gamma_{2g}))$
in PSL($2,\R)^{2g}\subset \left(\R^4 \right)^{2g}$.
See  e.g. \cite[10.3]{primer} for more details.

\begin{definition}\label{def norm}
A sequence of Fuchsian representations $\rho_{n}$ is \emph{normalized} if the following occurs. We fix a set $\gamma_1,\ldots,\gamma_{2g}$ of generators of $\pi_1S$, and three distinct points
$a,b,c$ of $\partial_\infty \H^2$. Then we ask $a$ to be the attractive fixed point of $\rho_n(\gamma_{2g})$, $b$ to be the repulsive fixed point of $\rho_n(\gamma_{2g})$, and $c$ to be the attractive fixed point of $\rho_n(\gamma_{2g-1})$ for all $n$.
\end{definition}
These are the Fricke coordinates of the Teichm\"uller space.
We will need the following classical result.

\begin{lemma}\label{lem: nielsen}
Let $(\rho_n)_n$ be a sequence of normalized Fuchsian representations. There exist
homeomorphisms $\widetilde\varphi_n : \H^2 \to \H^2$ that satisfy,
for any $\gamma \in \pi_1 S$,
\begin{equation}\label{eq:lift eq} \widetilde\varphi_n\circ \rho(\gamma)= \rho_n(\gamma) \circ \widetilde\varphi_n   \end{equation}
and $(\tilde{\varphi}_n)_n$ converge to the identity map, uniformly on compact sets.
\end{lemma}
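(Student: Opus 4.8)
The plan is to realize the comparison of two Fuchsian groups via an equivariant homeomorphism of the circle at infinity, and then to extend it to the disk in a canonical (hence continuously varying) way. First I would fix the reference representation $\rho$ (the one appearing in~\eqref{eq:lift eq}; this is the holonomy of the hyperbolic structure against which all the $\rho_n$ are being compared, and which is itself normalized). For each $n$, since $\rho$ and $\rho_n$ are both faithful discrete representations of $\pi_1 S$ into $O_0(2,1)$ with the same cocompact quotient type, there is a $\rho$-to-$\rho_n$ equivariant homeomorphism of $\partial_\infty\H^2=\Sph^1$: this is the classical boundary map of a quasi-isometry between two cocompact actions on $\H^2$ (the orbit map $\rho(\gamma)x_o\mapsto\rho_n(\gamma)x_o$ is a quasi-isometry $\tilde S\to\tilde S$ because $\pi_1 S$ with the word metric is quasi-isometric to either $\H^2$, and it extends to the Gromov boundary). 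Denote this boundary homeomorphism $\partial\widetilde\varphi_n\colon\Sph^1\to\Sph^1$, so that $\partial\widetilde\varphi_n\circ\rho(\gamma)=\rho_n(\gamma)\circ\partial\widetilde\varphi_n$ on $\Sph^1$ for all $\gamma$.

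Next I would extend each $\partial\widetilde\varphi_n$ to a homeomorphism $\widetilde\varphi_n\colon\H^2\to\H^2$ in a way that is both equivariant and canonical; two standard options are the Douady--Earle (conformal barycentric) extension or the Ahlfors--Beurling extension. Using the Douady--Earle extension is cleanest: it is conformally natural, i.e.\ it commutes with the $O_0(2,1)$-action on both sides, so the extension $\widetilde\varphi_n$ automatically satisfies $\widetilde\varphi_n\circ\rho(\gamma)=\rho_n(\gamma)\circ\widetilde\varphi_n$ on all of $\H^2$, which is~\eqref{eq:lift eq}; moreover it depends continuously (indeed real-analytically) on the boundary data. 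It remains to prove the convergence statement: $\widetilde\varphi_n\to\mathrm{id}$ uniformly on compact sets. For this I would argue that $\partial\widetilde\varphi_n\to\mathrm{id}$ uniformly on $\Sph^1$. The normalization of Definition~\ref{def norm} pins three boundary points $a,b,c$ once and for all as fixed points of $\rho_n(\gamma_{2g})$ and $\rho_n(\gamma_{2g-1})$, which are exactly the corresponding fixed points for $\rho$ (also normalized); so $\partial\widetilde\varphi_n$ fixes $a,b,c$ for every $n$. Since $\rho_n(\gamma)\to\rho(\gamma)$ in $\mathrm{PSL}(2,\R)$ for each generator $\gamma$, hence for every $\gamma\in\pi_1 S$, the attracting/repelling fixed points and translation lengths of $\rho_n(\gamma)$ converge to those of $\rho(\gamma)$; because the set of fixed points $\{$attracting, repelling$\}$ of all $\rho(\gamma)$, $\gamma\neq1$, is dense in $\Sph^1$, and $\partial\widetilde\varphi_n$ sends the fixed-point pair of $\rho(\gamma)$ to that of $\rho_n(\gamma)$, the maps $\partial\widetilde\varphi_n$ agree with $\mathrm{id}$ on a dense set in the limit; monotonicity of circle homeomorphisms fixing three points then upgrades pointwise convergence on a dense set to uniform convergence on $\Sph^1$. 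Finally, continuity of the Douady--Earle extension with respect to uniform convergence of boundary maps gives $\widetilde\varphi_n\to\mathrm{id}$ uniformly on compact subsets of $\H^2$.

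The main obstacle is the convergence half rather than the existence half. Existence of an equivariant boundary homeomorphism and its conformally natural extension are standard and essentially formal once one invokes the Douady--Earle construction; the work is in showing $\partial\widetilde\varphi_n\to\mathrm{id}$, and there the key input is that algebraic convergence $\rho_n\to\rho$ of cocompact Fuchsian representations forces convergence of all attracting/repelling fixed points on $\partial_\infty\H^2$ together with the normalization fixing $a,b,c$, so that the limit boundary map commutes with the full limit group action, fixes three points, and hence is the identity. (Alternatively, one can sidestep Douady--Earle entirely and build $\widetilde\varphi_n$ as an equivariant Lipschitz homeomorphism using the Milnor--\v{S}varc quasi-isometry and a partition-of-unity smoothing adapted to a fixed fundamental domain; the convergence argument is the same, using Corollary~\ref{lem:min-length-gamma>K-for-all-n}-type lower bounds on translation lengths to control the geometry. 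This is the route~\cite{fiv} takes, which the text says we are adapting.)
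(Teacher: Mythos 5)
Your proposal is correct, but it is a genuinely different route from the paper's. The paper's proof invokes Teichm\"uller's extremal mapping theorem to produce $K(n)$-quasiconformal equivariant homeomorphisms from $\H^2/\rho_n(\pi_1S)$ to $\H^2/\rho(\pi_1S)$ with $K(n)\to 1$, observes that the normalization fixes three points of $\partial_\infty\H^2$ in the lifts, and then cites Ahlfors' compactness theorem for normalized quasiconformal self-maps of the disc to get uniform convergence; the convergence statement is thus a black-box consequence of quasiconformal theory. You instead build the boundary homeomorphism $\partial\widetilde\varphi_n$ directly from the Milnor--\v{S}varc quasi-isometry between the two cocompact actions, extend it into $\H^2$ via the Douady--Earle barycentric extension (whose conformal naturality gives \eqref{eq:lift eq} for free), and prove the convergence by hand: algebraic convergence $\rho_n\to\rho$ forces convergence of all attracting/repelling fixed points, these are dense in $\Sph^1$, the three-point normalization pins things down, and monotonicity upgrades pointwise convergence on a dense set of the circle to uniform convergence, after which continuity of the Douady--Earle extension finishes. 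The paper's argument is shorter but leans on two substantial analytic theorems (Teichm\"uller's and Ahlfors'), whereas yours is more explicit and essentially self-contained modulo Douady--Earle; it also makes visible \emph{why} the limit map is the identity, which the paper leaves implicit. One small point: the lemma as stated in the paper does not explicitly assume $\rho_n\to\rho$, yet both proofs rely on it (the paper opens its proof with ``We know that $(\H^2/\rho_n(\pi_1S))_n$ converges in Teichm\"uller space''); you inherit the same implicit hypothesis, so you are on equal footing, but it is worth flagging that the statement is incomplete without it.
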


\begin{proof}
We know that $(\H^2/\rho_n(\pi_1S))_n$ converges in Teichm\"uller space.
By a theorem of Teichm\"uller, there  are  $K(n)$-quasiconformal homeomorphism from $\H^2/\rho_n(\pi_1S)$ to $\H^2/\rho(\pi_1S)$
with $K(n)\to 1$ \cite{IT}.
Their lifts satisfy \eqref{eq:lift eq}.
Moreover, due to the normalization we chose for the groups $\rho_n(\pi_1S)$, the lifts fix three distinct points on the boundary at infinity of $\H^2$.
Under this normalization condition, up to extract a subsequence,
the sequence of homeomorphisms converge uniformly
\cite[Theorem~1 p.~32]{ahlfors}, \cite{IT}.
\end{proof}

\begin{definition}
A \emph{Fuchsian H-convex function} is a pair $(u,\rho)$, where
$u$ is a H-convex function and $\rho$ is a Fuchsian representation of $\pi_1S$ into $O_0(2,1)$, such that for any  $\gamma\in \pi_1S$, \begin{equation} \label{eq:un inv}u\circ\rho(\gamma)=u~.\end{equation}
We say that a sequence of  Fuchsian H-convex functions $(u_n,\rho_n)_n$
converges to a pair $(u,\rho)$, if $u$ is a H-convex function, $\rho$ a Fuchsian representation,
$(u_n)_n$ converges to $u$ and $(\rho_n)_n$ converges to $\rho$.
\end{definition}

Note that if $(u,\rho)$ is a Fuchsian H-convex function, then
$\rho(\pi_1S)$ acts by isometries on $d_u$. Moreover, if $d_u$ is a distance, then the quotient gives a distance $\bar d_u$ on the compact surface $\H^2/\rho(\pi_1S)$. As the latter is compact, we obtain the following.

\begin{fact}
If $(u,\rho)$ is a Fuchsian H-convex function and $d_u$ a distance, then $d_u$ is a complete distance on $\H^2$.
\end{fact}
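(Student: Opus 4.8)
The plan is to transport the problem to the compact quotient, where a distance inducing the manifold topology is automatically complete, and then pull completeness back to $\H^2$. Since $\rho$ is a Fuchsian representation of the surface group $\pi_1S$ (of genus $\geq 2$), the image $\rho(\pi_1S)$ is a torsion-free discrete subgroup of $O_0(2,1)$ acting freely, properly discontinuously and cocompactly on $\H^2$; write $p\colon\H^2\to N:=\H^2/\rho(\pi_1S)$ for the quotient, a closed surface with $\pi_1N\cong\pi_1S$. Using the invariance \eqref{eq:un inv} together with the fact that $\rho(\pi_1S)$ acts on $\H^2$ by hyperbolic isometries, one reads off from \eqref{eq:length} that the length structure $\mathfrak{L}_u$ is $\rho(\pi_1S)$-invariant, hence so is the pseudo-distance $d_u$; thus $d_u$ descends to a pseudo-distance $\bar d_u$ on $N$, namely $\bar d_u(p(x),p(y))=\inf_{\gamma\in\rho(\pi_1S)}d_u(x,\gamma y)$.

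Next I would argue that $\bar d_u$ is in fact a genuine \emph{intrinsic} distance on $N$. The distance $d_u$ is intrinsic, being induced by the length structure $\mathfrak{L}_u$ (see Section~\ref{sec length}); because it is intrinsic, the covering map $p$ restricts to an isometry from every sufficiently small metric ball of $\H^2$ onto a ball of $N$ (this is the content of the proof of Proposition~I.3.25 in \cite{BH1999}, already used in Lemma~\ref{lem: pro iso locale}). It follows that intrinsicness passes to $\bar d_u$, and that $\bar d_u$ induces the quotient topology on $N$, which is the manifold topology and hence Hausdorff; a pseudo-distance inducing a Hausdorff topology separates points, so $\bar d_u$ is a distance. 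As $N$ is compact, $(N,\bar d_u)$ is a complete metric space. (Equivalently, this is the statement recalled in Section~\ref{sec:met on univ cov} that for a compact manifold carrying an intrinsic distance the distance is complete, applied to $N$ in place of $S$.)

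Finally, I would deduce completeness of $(\H^2,d_u)$ from that of $(N,\bar d_u)$. Given a $d_u$-Cauchy sequence $(x_k)_k$, its image $(p(x_k))_k$ is $\bar d_u$-Cauchy, hence converges to some $\bar x_\infty\in N$. Choose $r>0$ small enough that $\bar B:=B_{\bar d_u}(\bar x_\infty,r)$ is evenly covered by $p$ and that $p$ restricts to an isometry of each sheet of $p^{-1}(\bar B)$ onto $\bar B$. Fix $K$ with $p(x_K)\in\bar B$ and let $\tilde B_0$ be the sheet through $x_K$; since $\tilde B_0$ is open for $d_u$, there is $\epsilon>0$ with $B_{d_u}(x_K,\epsilon)\subset\tilde B_0$, and then for all large $k$ the Cauchy condition $d_u(x_K,x_k)<\epsilon$ forces $x_k\in\tilde B_0$. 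As $p|_{\tilde B_0}$ is an isometry onto $\bar B$, the point $x_\infty:=(p|_{\tilde B_0})^{-1}(\bar x_\infty)$ satisfies $d_u(x_k,x_\infty)=\bar d_u(p(x_k),\bar x_\infty)\to0$, so $(x_k)_k$ converges. This would prove the Fact. The one non-formal ingredient, invoked twice, is that a free, properly discontinuous, isometric group action on a length space yields a \emph{metric} covering --- a covering that is an isometry from small balls upstairs onto balls downstairs; everything else is bookkeeping. I expect this to be the only delicate point. Alternatively one can bypass the explicit Cauchy argument by checking that $d_u$ coincides with the canonical lift $\tilde{\bar d_u}$ of $\bar d_u$ to $\H^2$ in the sense of Section~\ref{sec:met on univ cov} and quoting completeness of such lifts; identifying the two distances again reduces to the same metric-covering property, this time through a Lebesgue-number argument on the compact surface $N$.
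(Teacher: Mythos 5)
Your proposal is correct and follows essentially the same route as the paper, which leaves the proof implicit in the sentence immediately preceding the Fact: the $\rho(\pi_1S)$-action by isometries gives a quotient distance $\bar d_u$ on the compact surface $\H^2/\rho(\pi_1S)$, and compactness yields completeness, which is pulled back through the covering. You have simply made explicit the metric-covering ingredient and the Cauchy-sequence bookkeeping that the paper takes for granted.
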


\begin{lemma}\label{lem : well-def}
Let $(u_n,\rho_n)_n$ be a sequence of Fuchsian H-convex functions that converges to a pair $(u,\rho)$.
Then $(u,\rho)$ is a Fuchsian H-convex function.
\end{lemma}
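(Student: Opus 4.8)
The plan is to check the single condition in the definition of a Fuchsian H-convex function that is not already part of the hypothesis, namely the invariance relation \eqref{eq:un inv}, $u\circ\rho(\gamma)=u$ for every $\gamma\in\pi_1S$. Indeed, the assertions that $u$ is H-convex and that $\rho$ is a Fuchsian representation are built into the very definition of convergence of Fuchsian H-convex functions, so there is nothing to prove on those two points; only the equivariance of $u$ with respect to $\rho$ must be transferred from the approximants to the limit.

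First I would fix $\gamma\in\pi_1S$ and $x\in\H^2$. For every $n$, \eqref{eq:un inv} applied to $(u_n,\rho_n)$ gives $u_n(\rho_n(\gamma)x)=u_n(x)$. Since $\rho_n\to\rho$ and $\gamma$ is a word in the chosen generators, continuity of multiplication and inversion in $O_0(2,1)$ yields $\rho_n(\gamma)\to\rho(\gamma)$, and as $O_0(2,1)$ acts continuously on $\H^2$ we get $\rho_n(\gamma)x\to\rho(\gamma)x$. Hence the points $\rho_n(\gamma)x$, for $n$ large, together with $\rho(\gamma)x$, lie in a fixed compact set $C\subset\H^2$. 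On $C$ the convergence $u_n\to u$ is uniform (this is the conclusion of Lemma~\ref{lem conv un}; if only pointwise convergence is assumed, uniformity on $C$ follows from it together with the equi-Lipschitz estimate of Lemma~\ref{lem:equilip} and the Arzela--Ascoli theorem), and $u$ is continuous on $C$, so
$$|u_n(\rho_n(\gamma)x)-u(\rho(\gamma)x)|\le\sup_C|u_n-u|+|u(\rho_n(\gamma)x)-u(\rho(\gamma)x)|\xrightarrow{n\to\infty}0.$$
On the other hand $u_n(x)\to u(x)$ by pointwise convergence. Passing to the limit in $u_n(\rho_n(\gamma)x)=u_n(x)$ gives $u(\rho(\gamma)x)=u(x)$.

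Since $x\in\H^2$ and $\gamma\in\pi_1S$ were arbitrary, $u\circ\rho(\gamma)=u$ for all $\gamma$, i.e. $(u,\rho)$ satisfies \eqref{eq:un inv}; combined with the hypotheses that $u$ is H-convex and $\rho$ is Fuchsian, this is exactly the statement that $(u,\rho)$ is a Fuchsian H-convex function. There is no serious obstacle here: the only subtlety is that the evaluation point $\rho_n(\gamma)x$ moves with $n$, which is handled by confining it, together with its limit, to a single compact set $C$ and invoking the uniform convergence of $u_n$ to $u$ on $C$.
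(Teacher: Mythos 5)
Your proof is correct and follows essentially the same route as the paper: pass to the limit in the identity $u_n(\rho_n(\gamma)x)=u_n(x)$, controlling the moving evaluation point $\rho_n(\gamma)x\to\rho(\gamma)x$ by confining it to a compact set. The only cosmetic difference is that you bound $|u(\rho_n(\gamma)x)-u(\rho(\gamma)x)|$ by continuity of the limit $u$, whereas the paper bounds $|u_n(\rho(\gamma)x)-u_n(\rho_n(\gamma)x)|$ by the equi-Lipschitz estimate of Lemma~\ref{lem:equilip}; these are interchangeable since $u$ is itself locally Lipschitz as a limit of equi-Lipschitz functions.
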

\begin{proof}
Let $y\in \H^2$ and $\gamma\in\pi_1S$.
Then, for every $\epsilon>0$
\begin{align}
|u(\rho(\gamma) y)-u(y)|
&\leq |u(\rho(\gamma) y)-u_n(\rho(\gamma) y)|\label{1}
\\
&+|u_n(\rho(\gamma) y)-u_n(\rho_n(\gamma) y)|\label{2}
\\
&+|u_n(\rho_n(\gamma) y)-u_n(y)|\label{3}
\\
&+|u_n(y)-u(y)|<\epsilon\label{4}.
\end{align}
In fact, for $n$ large enough, $d_{\H^2}(\rho(\gamma) y,\rho_n(\gamma) y)\to0$ as $n\to\infty$, and as the $u_n$ are equi-Lipschitz on a sufficiently large compact set (Lemma~\ref{lem:equilip}),
 the absolute value at line \eqref{2} is smaller than $\epsilon/4$ for $n$ large enough. Moreover the absolute value at line \eqref{3} is zero for every $n$ by the $\rho_n(\pi_1S)$-invariance of $u_n$, and the absolute value at lines \eqref{1} and \eqref{4} are smaller then $\epsilon/4$ for $n$ large enough by the uniform convergence of the $u_n$. Since $\epsilon>0$ is arbitrary, this completes the proof.
\end{proof}

\begin{corollary}\label{lem: conv en haut}
Let $(u_n,\rho_n)$ be Fuchsian H-convex functions such that
\begin{itemize}
\item $(u_n,\rho_n)_n$ converges to a pair $(u,\rho)$;
\item there exist $\alpha,\beta >0$ with $\alpha < u_n <\beta$;
\item $d_{u_n}$ are distances with Lipschitz shortest paths;
\item $d_{u_n}$ converge to $d_u$, uniformly on compact sets.
\end{itemize}

Then, on any compact set of $\H^2$,
$d_{u_n}(\widetilde\varphi_n(\cdot),\widetilde\varphi_n(\cdot))$ uniformly converge to $d_u$, where $\widetilde\varphi$ is given by Lemma~\ref{lem: nielsen}.
\end{corollary}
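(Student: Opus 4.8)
The plan is to deduce this corollary from three facts already in hand: the hypothesis that $d_{u_n}\to d_u$ uniformly on compact sets, the conclusion of Lemma~\ref{lem: nielsen} that $\widetilde\varphi_n\to\mathrm{id}$ uniformly on compact sets, and the comparison $d_{u_n}\le\beta\,d_{\H^2}$, which holds for every $n$ by Lemma~\ref{lem: comp hyp met12} (equivalently, by the upper bound in Corollary~\ref{lem:upper hyp dist}) thanks to the uniform bound $u_n<\beta$. First I would fix a compact set $C\subset\H^2$ and, for $x,y\in C$, insert $d_{u_n}(x,y)$ as an intermediate term:
\begin{equation*}
\big|d_{u_n}(\widetilde\varphi_n(x),\widetilde\varphi_n(y)) - d_u(x,y)\big| \le \big|d_{u_n}(\widetilde\varphi_n(x),\widetilde\varphi_n(y)) - d_{u_n}(x,y)\big| + \big|d_{u_n}(x,y) - d_u(x,y)\big|.
\end{equation*}

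The second summand tends to $0$ uniformly for $(x,y)\in C\times C$, directly by the hypothesis. For the first summand I would apply the triangle inequality for $d_{u_n}$ to bound it by $d_{u_n}(\widetilde\varphi_n(x),x) + d_{u_n}(\widetilde\varphi_n(y),y)$, then replace $d_{u_n}$ by $\beta\,d_{\H^2}$ using Lemma~\ref{lem: comp hyp met12}, obtaining $\beta\big(d_{\H^2}(\widetilde\varphi_n(x),x) + d_{\H^2}(\widetilde\varphi_n(y),y)\big)$. Since $\widetilde\varphi_n\to\mathrm{id}$ uniformly on $C$, the quantity $\sup_{z\in C} d_{\H^2}(\widetilde\varphi_n(z),z)$ goes to $0$, so the first summand also goes to $0$ uniformly on $C\times C$. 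Summing the two estimates yields the asserted uniform convergence.

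I do not expect a serious obstacle: this is a packaging statement. The only point requiring a little care is that the comparison $d_{u_n}\le\beta\,d_{\H^2}$ holds with a constant $\beta$ independent of $n$, which is exactly what the assumption $\alpha<u_n<\beta$ provides. Note also that the argument uses only the global inequality $d_{u_n}\le\beta\,d_{\H^2}$ and the ambient comparison with $d_{\H^2}$, so the fact that $\widetilde\varphi_n$ is not an isometry for any of the metrics involved causes no trouble; in particular the equivariance relation \eqref{eq:lift eq} is not needed for this corollary and will only be used further on.
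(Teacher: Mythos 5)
Your proposal is correct and follows essentially the same route as the paper: triangle inequality to reduce to $d_{u_n}(\widetilde\varphi_n(x),x)\to 0$ uniformly, which is obtained from $\widetilde\varphi_n\to\mathrm{id}$ (Lemma~\ref{lem: nielsen}) combined with the uniform comparison $d_{u_n}\le\beta\,d_{\H^2}$ (the paper cites Corollary~\ref{lem:upper hyp dist}, you cite the equivalent Lemma~\ref{lem: comp hyp met12}). Your use of a single absolute-value decomposition is a slightly tidier packaging of the paper's separate upper and lower estimates, but the content is identical.
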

\begin{proof}
By Lemma~\ref{lem: nielsen} and  Corollary~\ref{lem:upper hyp dist},
$x\mapsto d_{u_n}(\tilde{\varphi}_n(x),x)$ uniformly converges to $0$ (on the given compact set).
By the triangle inequality,
$$d_{u_n}(\widetilde\varphi_n(x),\widetilde\varphi_n(y))-d_u (x,y) \leq
d_{u_n}(\widetilde\varphi_n(x),x) + d_{u_n}(\widetilde\varphi_n(y),y)+ d_{u_n}(x,y)-d_u (x,y) $$
and the right-hand side is uniformly less than any $\epsilon >0$ for $n$ sufficiently large by the
preceding argument and the assumption.
On the other hand,
$$d_u(x,y)-d_{u_n}(\widetilde\varphi_n(x),\widetilde\varphi_n(y))
= d_u(x,y)-d_{u_n}(x,y)+d_{u_n}(x,y)-d_{u_n}(\widetilde\varphi_n(x),\widetilde\varphi_n(y))~. $$
We know that $d_u(x,y)-d_{u_n}(x,y)$ is uniformly less that any $\epsilon >0$ for $n$ sufficiently large, and the term $d_{u_n}(x,y)-d_{u_n}(\widetilde\varphi_n(x),\widetilde\varphi_n(y)) $ is less than
$$d_{u_n}(x,\widetilde\varphi_n(x)) + d_{u_n}(y,\widetilde\varphi_n(y))+d_{u_n}(\widetilde\varphi_n(x),\widetilde\varphi_n(y))- d_{u_n}(\widetilde\varphi_n(x),\widetilde\varphi_n(y))$$
that is also uniformly less that any $\epsilon >0$ for $n$ sufficiently large.
\end{proof}

 \begin{proposition}\label{prop:con mink2}
Under the assumptions of Corollary~\ref{lem: conv en haut},
 up to extract a subsequence, $(\H^2/\rho_n(\pi_1S),\bar d_{u_n})$ uniformly converge to  $(\H^2/\rho(\pi_1S), \bar d_u)$.
\end{proposition}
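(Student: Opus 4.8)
The plan is to move the whole problem onto the \emph{fixed} hyperbolic surface $N:=\H^2/\rho(\pi_1S)$ and then feed it into Lemma~\ref{lem alex trick}. Let $\widetilde\varphi_n\colon\H^2\to\H^2$ be the homeomorphisms of Lemma~\ref{lem: nielsen}; since $\widetilde\varphi_n\circ\rho(\gamma)=\rho_n(\gamma)\circ\widetilde\varphi_n$, they descend to homeomorphisms $\varphi_n\colon N\to\H^2/\rho_n(\pi_1S)$. Pull the induced distances back to $\H^2$ by setting $\delta_n(x,y):=d_{u_n}(\widetilde\varphi_n(x),\widetilde\varphi_n(y))$; the equivariance of $\widetilde\varphi_n$ together with the $\rho_n(\pi_1S)$-invariance of $d_{u_n}$ makes each $\delta_n$ a $\rho(\pi_1S)$-invariant distance on $\H^2$, so it descends to a distance $\bar\delta_n$ on $N$ for which $\varphi_n$ is an isometry onto $(\H^2/\rho_n(\pi_1S),\bar d_{u_n})$. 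It therefore suffices to prove that $\bar\delta_n\to\bar d_u$ uniformly on the compact surface $N$. Two inputs will be used constantly: $\delta_n\to d_u$ uniformly on every compact subset of $\H^2$ (Corollary~\ref{lem: conv en haut}), and, by Corollary~\ref{lem:upper hyp dist}, the existence of constants $0<k_1\le k_2$ depending only on $\alpha,\beta$ such that $k_1d_{\H^2}\le d_{u_n}\le k_2d_{\H^2}$ for every $n$.

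By Lemma~\ref{lem alex trick}, uniform convergence on $N$ follows from (a) pointwise convergence $\bar\delta_n(p,q)\to\bar d_u(p,q)$ on $N\times N$, together with (b) a distance on $N$ dominating all the $\bar\delta_n$. For (b) I would take the quotient $\bar d_{\sup}$ of $d_{\sup}:=\sup_{n\in\bar\N}\delta_n$, with $\delta_\infty:=d_u$: this is a finite $\rho(\pi_1S)$-invariant distance on $\H^2$ (finite because $d_{\sup}\le k_2d_{\H^2}$), it is continuous since the $\delta_n$ converge uniformly near every point (Fact~\ref{sup continu}), hence it descends to a continuous distance on the compact $N$; and $\delta_n\le d_{\sup}$ descends to $\bar\delta_n\le\bar d_{\sup}$, so $\bar\delta_n\le\bar d_{\sup}\le D:=\operatorname{diam}_{\bar d_{\sup}}N<\infty$ for all $n$.

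For (a), fix lifts $\tilde p,\tilde q\in\H^2$ of $p,q$, so that $\bar\delta_n(p,q)=\inf_{\gamma\in\pi_1S}\delta_n(\tilde p,\rho(\gamma)\tilde q)$ and similarly for $\bar d_u$. The inequality $\limsup_n\bar\delta_n(p,q)\le\bar d_u(p,q)$ is immediate: choose $\gamma_0$ with $d_u(\tilde p,\rho(\gamma_0)\tilde q)\le\bar d_u(p,q)+\epsilon$ and apply Corollary~\ref{lem: conv en haut} on the compact set $\{\tilde p,\rho(\gamma_0)\tilde q\}$. For the reverse inequality, let $\bar c_n$ be a shortest path for $\bar\delta_n$ from $p$ to $q$ and $\tilde c_n$ its lift starting at $\tilde p$; then $\tilde c_n$ ends at $\rho(\gamma_n)\tilde q$ for some $\gamma_n$ attaining the infimum, and $L_{\delta_n}(\tilde c_n)=\bar\delta_n(p,q)\le D$ (the covering map being a local isometry). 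Since $\delta_n(x,y)\ge k_1d_{\H^2}(\widetilde\varphi_n x,\widetilde\varphi_n y)$, applying this termwise in the definition of length gives $L_{d_{\H^2}}(\widetilde\varphi_n\circ\tilde c_n)\le D/k_1$; as $\widetilde\varphi_n(\tilde p)\to\tilde p$, for $n$ large the curve $\widetilde\varphi_n\circ\tilde c_n$ stays inside a fixed $d_{\H^2}$-ball $C$. Because $\widetilde\varphi_n^{-1}\to\mathrm{id}$ uniformly on compact sets as well (this is clear from the proof of Lemma~\ref{lem: nielsen}, the $\widetilde\varphi_n^{-1}$ being quasiconformal with the same normalisation and dilatation tending to $1$), we get $\widetilde\varphi_n^{-1}(C)\subset C'$ for a fixed compact $C'$ and $n$ large, so $\tilde c_n\subset C'$; in particular $\rho(\gamma_n)\tilde q\in C'$, and proper discontinuity of the $\rho(\pi_1S)$-action forces $\gamma_n$ into a fixed finite subset of $\pi_1S$ for $n$ large. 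Extracting a subsequence on which $\gamma_n\equiv\gamma_*$ and applying Corollary~\ref{lem: conv en haut} once more, $\bar\delta_n(p,q)=\delta_n(\tilde p,\rho(\gamma_*)\tilde q)\to d_u(\tilde p,\rho(\gamma_*)\tilde q)\ge\bar d_u(p,q)$ along that subsequence; since this argument applies to any subsequence, $\liminf_n\bar\delta_n(p,q)\ge\bar d_u(p,q)$. This proves (a), hence the proposition, by Lemma~\ref{lem alex trick} and transport along $\varphi_n$.

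The one step requiring genuine work is the lower bound in (a): one must prevent the minimising deck transformations $\gamma_n$ from escaping to infinity — precisely the degeneration illustrated in Figure~\ref{fig1}, where a moving group and a moving metric together keep a distance bounded. It cannot happen here because $\rho_n\to\rho$ is assumed, and the argument converts this hypothesis into the needed finiteness by pushing the entire lifted shortest path back into a fixed compact region of $\H^2$ via $\widetilde\varphi_n^{-1}$, using the uniform bi-Lipschitz bound $d_{u_n}\ge k_1d_{\H^2}$ of Corollary~\ref{lem:upper hyp dist}. I expect the control of $\widetilde\varphi_n^{-1}$ on compact sets — and the verification that the lifted shortest paths stay confined — to be the only delicate point; the remainder is a direct assembly of Corollaries~\ref{lem: conv en haut} and~\ref{lem:upper hyp dist}, Lemmas~\ref{lem alex trick} and~\ref{lem: nielsen}, and Fact~\ref{sup continu}.
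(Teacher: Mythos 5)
Your proposal follows the same overall route as the paper: descend everything to the fixed surface $N=\H^2/\rho(\pi_1S)$ via the Nielsen-type equivariant maps $\widetilde\varphi_n$ of Lemma~\ref{lem: nielsen}, and use the uniform-on-compacts convergence of Corollary~\ref{lem: conv en haut} to compare $\bar d_{u_n}\circ(\varphi_n\times\varphi_n)$ with $\bar d_u$. Where you diverge is in how you get the reverse inequality $\liminf_n\bar\delta_n(p,q)\ge\bar d_u(p,q)$. The paper asserts, for each $\gamma$, that $d_{u_n}(\widetilde\varphi_n(x),\rho_n(\gamma)\widetilde\varphi_n(y))+\epsilon\ge d_{u_n}(\widetilde\varphi_n(x),\widetilde\varphi_n(y))$ for $n$ large (uniformly in $x,y\in C$), and then takes the infimum over $\gamma$. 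As written, the rank $N$ at which the inequality kicks in may depend on $\gamma$, so this step silently uses that the infimum is always attained in a fixed \emph{finite} subset of $\pi_1S$ — precisely the point you make explicit. Your version (lift a $\bar\delta_n$-shortest path, bound its $d_{\H^2}$-length via $d_{u_n}\ge k_1 d_{\H^2}$, push it back into a fixed compact set using the uniform convergence of $\widetilde\varphi_n^{-1}$ to the identity, and invoke proper discontinuity to finitize $\gamma_n$) is a genuinely cleaner treatment of the step that is the only real subtlety of the proposition, and your remark that $\widetilde\varphi_n^{-1}\to\mathrm{id}$ for the same reasons as $\widetilde\varphi_n$ is correct. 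You also route the final uniformity through Lemma~\ref{lem alex trick}, whereas the paper gives a direct $\epsilon$-estimate uniform on $C$; both are fine.

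One small inaccuracy: the bound $d_{\sup}\le k_2 d_{\H^2}$ is not quite right, since $\delta_n(x,y)=d_{u_n}(\widetilde\varphi_n(x),\widetilde\varphi_n(y))\le k_2 d_{\H^2}(\widetilde\varphi_n(x),\widetilde\varphi_n(y))$ and $\widetilde\varphi_n$ is not a hyperbolic isometry, so the right-hand side is not controlled by $k_2 d_{\H^2}(x,y)$. The finiteness of $d_{\sup}(x,y)$ does hold, because for fixed $x,y$ one has $d_{\H^2}(\widetilde\varphi_n(x),\widetilde\varphi_n(y))\to d_{\H^2}(x,y)$, and continuity of $d_{\sup}$ (hence that it induces the topology of $\H^2$, together with $d_{\sup}\ge d_u\ge k_1 d_{\H^2}$) still follows from Fact~\ref{sup continu} and the local uniform convergence of Corollary~\ref{lem: conv en haut}; so the conclusion you want survives, but the cited inequality should be replaced by this argument. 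With that fixed, the proof is complete.
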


\begin{proof}

Every sequence of representation can be normalized by composing on the left by hyperbolic isometries. Also, applying an isometry to a surface, we do not change its induced distance.
 Hence, in the statement of Proposition~\ref{prop:con mink2}, we may assume that the sequence of representations is normalized.

The maps $\widetilde\varphi_n$ from Lemma~\ref{lem: nielsen} induces homeomorphisms
$$\varphi_n : \H^2/\rho(\pi_1S)\to \H^2/\rho_n(\pi_1S)~.$$

Let $C$ be a compact set of $\H^2$ such that, for any
$p,q\in \H^2/\rho(\pi_1S)$, there are points  $x,y$ in $C$ which are lifts  of $p,q$ respectively, such that \begin{equation}\label{d=d}\bar d_u(p,q)=d_{u}(x,y)~.\end{equation} By definition of $\tilde{\varphi}_n$,  $\tilde{\varphi}_n(x)$ and $\tilde{\varphi}_n(y)$ are lifts of $\varphi_n(p)$ and $\varphi_n(q)$. In  particular,
$d_{u_n}(\tilde{\varphi}_n(x),\tilde{\varphi}_n(y))\geq  \bar d_{u_n} (\varphi_n(p),\varphi_n(q))$,
and
\begin{equation}\label{inequalite 1}
\bar d_{u_n}(\varphi_n(p),\varphi_n(q))- \bar d_u (p,q)  \leq  d_{u_n}(\tilde{\varphi}_n(x),\tilde{\varphi}_n(y))- d_u(x,y)~.
\end{equation}

Now let us look at $\bar d_u (p,q)- \bar d_{u_n}(\varphi_n(p),\varphi_n(q)$.
For any $\gamma\in \pi_1S$,
$$d_u(x,\rho(\gamma)y) \geq d_u(x,y) $$
and by Corollary~\ref{lem: conv en haut}, for any $\epsilon >0$, if $n$ is sufficiently large, uniformly on $C$,

$$d_{u_n}(\tilde{\varphi}_n(x),\rho_n(\gamma)\tilde\varphi_n(y))  + \epsilon\geq d_{u_n}(\tilde\varphi_n(x),\tilde\varphi_n(y)) $$
and as $\bar d_{u_n}(\varphi_n(p),\varphi_n(q))$ is the minimum over $\pi_1S$
of the $d_{u_n}(\tilde{\varphi}_n(x),\rho_n(\gamma)\tilde\varphi_n(y)) $, we obtain
$$\bar d_{u_n}(\varphi_n(p),\varphi_n(q))  + \epsilon\geq d_{u_n}(\tilde\varphi_n(x),\tilde\varphi_n(y))~. $$

This last equation and \eqref{d=d} give
\begin{equation}\label{inequalite 2}
 \bar d_u (p,q)-\bar d_{u_n}(\varphi_n(p),\varphi_n(q)) \leq d_u(x,y)-d_{u_n}(\tilde{\varphi}_n(x),\tilde{\varphi}_n(y))   + \epsilon~.
\end{equation}

From \eqref{inequalite 1}, \eqref{inequalite 2},
and  Corollary~\ref{lem: conv en haut} applied for the compact set $C$,
$$| \bar d_{u_n}(\varphi_n(p),\varphi_n(q))-\bar d_u (p,q)|$$
is uniformly less than any positive number for $n$ sufficiently large.
\end{proof}

\subsection{The polyhedral case}

\begin{definition}
A Fuchsian H-convex function $(u,\rho)$ is \emph{polyhedral}
if the graph $G_u$ is the boundary of the convex hull of the orbit for $\rho(\pi_1S)$ of a
finite number of points in $I^+(0)$.
\end{definition}

\begin{lemma} \label{lemma321}
Let $(u,\rho)$ be a  polyhedral Fuchsian H-convex function.
Then $d_u$ is a complete distance, with Lipschitz shortest paths. Moreover, on the set of Lipschitz paths, $\mathfrak{L}_u=L_{d_u}$.
\end{lemma}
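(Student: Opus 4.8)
The plan is to identify $d_u$ with the intrinsic metric of the Euclidean polyhedral surface carried by $G_u$, and then to read off the three assertions from classical facts about such surfaces. \textbf{Step 1 (structure of $G_u$).} Since $(u,\rho)$ is polyhedral, $G_u$ is the boundary of the convex hull in $\R^3$ of the $\rho(\pi_1S)$-orbit of finitely many points of $I^+(0)$. By \cite{Fil12} (see also \cite{Fil11}) it is then a locally finite, $\rho(\pi_1S)$-invariant gluing of \emph{compact convex Euclidean polygons} — its \emph{faces} — along pairs of isometric edges; and since $\rho$ is Fuchsian, the radial projection $\pr$ is a $\rho$-equivariant homeomorphism $G_u\to\H^2$, so $G_u/\rho(\pi_1S)$ is a compact surface homeomorphic to $S$. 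Each face lies in a spacelike plane, on which $\langle\cdot,\cdot\rangle_-$ restricts to a positive definite scalar product; equipping the faces with these flat metrics turns $G_u$ into a Euclidean polyhedral surface. Write $m$ for its intrinsic distance and $L_{\mathrm{poly}}$ for the associated length structure. Because $G_u/\rho(\pi_1S)$ is compact, $m$ is the lift of a compact polyhedral metric, hence $(G_u,m)$ is complete, locally compact and intrinsic; by Theorem~\ref{HR} any two points are joined by an $m$-shortest path, closed $m$-balls are compact, and, being a geodesic of a polyhedral surface, such a path is a concatenation of straight segments lying in faces, in particular a Lipschitz curve of $\R^3$.

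\textbf{Step 2 ($\mathfrak{L}_u=L_{\mathrm{poly}}$ on Lipschitz curves, and $d_u=m$).} For a Lipschitz curve $c$ of $G_u$ and a.e.\ $t$, the derivative $c'(t)$ exists, is tangent to $G_u$, hence spacelike, and $\langle c'(t),c'(t)\rangle_-$ is the squared norm of $c'(t)$ in the flat metric of the face through $c(t)$ (unambiguously so on edges and vertices). Hence $\mathfrak{L}_u(c)=\int\|c'(t)\|_-\,\mathrm{d}t=L_{\mathrm{poly}}(c)$. Therefore, for $x,y\in G_u$: any Lipschitz competitor $c$ from $x$ to $y$ has $\mathfrak{L}_u(c)=L_{\mathrm{poly}}(c)\geq m(x,y)$, so $d_u(x,y)\geq m(x,y)$; and an $m$-shortest path $\nu$ from $x$ to $y$ is Lipschitz with $\mathfrak{L}_u(\nu)=L_{\mathrm{poly}}(\nu)=m(x,y)$, so $d_u(x,y)\leq m(x,y)$. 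Thus $d_u=m$: in particular $d_u$ is a genuine (not merely pseudo-) distance, it is complete, and the $m$-shortest paths are $d_u$-shortest paths, Lipschitz for the ambient Euclidean metric.

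\textbf{Step 3 ($\mathfrak{L}_u=L_{d_u}$ on Lipschitz curves).} The length structure of a Euclidean polyhedral metric is lower semicontinuous; equivalently $L_{\mathrm{poly}}=L_m$, which is the classical statement that the inscribed broken $m$-geodesics of a curve have total length converging to its $L_{\mathrm{poly}}$-length (true for polyhedral, and more generally Alexandrov, surfaces; cf.\ \cite{BBI2001}). Since $\mathfrak{L}_u=L_{\mathrm{poly}}$ on Lipschitz curves by Step~2 and $d_u=m$, Proposition~\ref{lower semic} applied to $\mathfrak{L}_u$, whose admissible curves are exactly the Lipschitz curves, yields $\mathfrak{L}_u=L_{d_u}$ on Lipschitz curves. (Equivalently, one checks directly that for a.e.\ $t$ the metric derivative of $c$ for $d_u$ at $t$ equals $\|c'(t)\|_-$, since $m$ is locally isometric near $c(t)$ to the flat model of the face or edge through $c(t)$, and integrates.)

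\textbf{Expected main obstacle.} Steps~2 and~3 are essentially bookkeeping once Step~1 is in hand, so the load-bearing input is really the fact — borrowed from \cite{Fil12} — that the convex hull of the orbit is a locally finite gluing of \emph{compact} convex Euclidean polygons with \emph{compact} quotient. A secondary subtlety worth spelling out is that shortest paths, while Lipschitz for the ambient Euclidean metric, are not uniformly so over all of $G_u$, since the faces tilt towards the light cone far out in $I^+(0)$; this is harmless here because a shortest path between two fixed points stays inside a compact $m$-ball meeting only finitely many faces, on which $m$ and the ambient Euclidean distance are bi-Lipschitz.
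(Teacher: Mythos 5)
Your proof is correct and follows essentially the same route as the paper: both rest on the fact (cited from \cite{Fil12}) that $G_u$ is a locally finite gluing of compact convex Euclidean polygons with compact quotient, deduce completeness and Hopf--Rinow, observe that polyhedral geodesics are broken lines hence Lipschitz, and reduce $\mathfrak{L}_u=L_{d_u}$ to the classical Euclidean equality of the two ways of measuring length of Lipschitz curves. Your introduction of the auxiliary metric $m$ and the explicit invocation of lower semicontinuity via Proposition~\ref{lower semic} is a slightly more structured packaging of the same argument the paper compresses into a citation of \cite{bur}.
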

\begin{proof}

It can be   easily showed that $G_u$ is a locally finite gluing of compact convex Euclidean polygons  \cite{Fil12}. In particular,
$d_u$ is a distance. Moreover, the quotient of $G_u$ by $\rho(\pi_1S)$ is compact, hence
$d_u$ is a complete metric.
By Hopf--Rinow theorem, there is a shortest path between each pair of points. Clearly, a shortest path on $G_u$ is a broken line, in particular, it is Lipschitz.

Let us consider a Lipschitz path on $G_u$. It is the union of
a finite number of Lipschitz curves, each contained in a face of $G_u$. Those ones are compact convex polygons in Euclidean planes, where the Euclidean structure is the restriction of the Minkowski metric to the plane containing the face.
It is well-known that in the Euclidean plane, the two ways of measuring length of Lipschitz curves coincide (see e.g. \cite{bur}
for a clear review of this fact and generalizations).

\end{proof}

Putting together Lemma~\ref{lemma321}, Proposition~\ref{prop:con mink} and Proposition~\ref{prop:con mink2}, one obtains the following.

\begin{proposition}\label{prop:main 3}
Let $(u_n,\rho_n)_n$ be a sequence of polyhedral Fuchsian H-convex functions. If
\begin{itemize}
\item there are $\alpha,\beta >0$ with $\alpha < u_n <\beta$,
\item $(\rho_n)_n$ converge to a Fuchsian representation $\rho$,
\end{itemize}
then, up to extract subsequences, there is a Fuchsian H-convex function $(u,\rho)$
such that
$(\H^2/\rho_n(\pi_1S),\bar d_{u_n})_n$ uniformly converge to
$(\H^2/\rho(\pi_1S),\bar d_{u})$.
\end{proposition}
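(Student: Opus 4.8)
The plan is to assemble the statement from the three results cited immediately before it, verifying the hypotheses at each link of the chain; no new construction is required, only bookkeeping.

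First I would apply Lemma~\ref{lemma321}. Because each $(u_n,\rho_n)$ is a \emph{polyhedral} Fuchsian H-convex function, every $d_{u_n}$ is a complete distance with Lipschitz shortest paths and satisfies $\mathfrak{L}_{u_n}=L_{d_{u_n}}$ on the set of Lipschitz curves. Together with the hypothesis $\alpha<u_n<\beta$, these are exactly the three assumptions of Proposition~\ref{prop:con mink}; invoking it, and passing to a subsequence, produces an H-convex function $u$ with $\alpha\le u\le\beta$ such that $u_n\to u$ and $d_{u_n}\to d_u$, both uniformly on compact sets of $\H^2$. By the remark following Proposition~\ref{prop:con mink} (the bound \eqref{eq up bound dh}), $d_u$ is in fact a genuine complete distance on $\H^2$, not merely a pseudo-distance.

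Next, since $\rho_n\to\rho$ by hypothesis, the pair $(u,\rho)$ is a limit of the Fuchsian H-convex functions $(u_n,\rho_n)$ in the sense of the relevant definition, so Lemma~\ref{lem : well-def} upgrades $(u,\rho)$ to a genuine Fuchsian H-convex function; in particular $\rho(\pi_1S)$ acts by isometries on $d_u$ and the quotient distance $\bar d_u$ on $\H^2/\rho(\pi_1S)$ is well defined. At this point all four hypotheses of Corollary~\ref{lem: conv en haut} are in place: the convergence $(u_n,\rho_n)\to(u,\rho)$, the uniform bound $\alpha<u_n<\beta$, the existence of Lipschitz shortest paths for each $d_{u_n}$ (Lemma~\ref{lemma321}), and the convergence $d_{u_n}\to d_u$ uniformly on compacts. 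The corollary then yields that $d_{u_n}(\widetilde\varphi_n(\cdot),\widetilde\varphi_n(\cdot))\to d_u$ uniformly on compact sets, with $\widetilde\varphi_n$ the equivariant homeomorphisms of Lemma~\ref{lem: nielsen}. This is precisely the hypothesis of Proposition~\ref{prop:con mink2}, which, after one further extraction of a subsequence (and, as in its proof, a harmless left normalization of the $\rho_n$), gives the uniform convergence of $(\H^2/\rho_n(\pi_1S),\bar d_{u_n})$ to $(\H^2/\rho(\pi_1S),\bar d_{u})$, completing the argument.

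I do not expect a genuine obstacle here; the one place demanding care is the very first step, where \emph{polyhedrality} is used, via Lemma~\ref{lemma321}, to make the structural hypotheses ($d_{u_n}$ complete, Lipschitz shortest paths, $\mathfrak{L}_{u_n}=L_{d_{u_n}}$) available — without these one could not even start Proposition~\ref{prop:con mink}, Corollary~\ref{lem: conv en haut} or Proposition~\ref{prop:con mink2}.
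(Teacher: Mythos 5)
Your proof is correct and follows essentially the same route as the paper, which simply writes that Proposition~\ref{prop:main 3} is obtained by ``putting together Lemma~\ref{lemma321}, Proposition~\ref{prop:con mink} and Proposition~\ref{prop:con mink2}.'' You are merely more explicit about the intermediate bookkeeping --- using polyhedrality via Lemma~\ref{lemma321} to unlock Proposition~\ref{prop:con mink}, then Lemma~\ref{lem : well-def} to ensure the limit $(u,\rho)$ is genuinely Fuchsian H-convex so that $\bar d_u$ makes sense, and verifying the hypotheses of Corollary~\ref{lem: conv en haut} before invoking Proposition~\ref{prop:con mink2} --- all of which is implicit in the paper's one-line proof.
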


\section{Compactness results} \label{sec 4}

The aim of this section is to give, in some sense, the converse to Proposition~\ref{prop:main 3}.
More precisely, we want to prove the following.

\begin{proposition}\label{prop finale}
Let $(S,d)$ be a metric of non-positive curvature, and let $(u_n,\rho_n)$ be polyhedral Fuchsian  H-convex functions  such that
\begin{itemize}
\item the representations $\rho_n$ are normalized in the sense of Definition~\ref{def norm}.
\item the sequence of compact surfaces $(\H^2/\rho_n(\pi_1S),\bar d_{u_n})$ uniformly converges to $(S,d)$~;
\end{itemize}
then, up to extract a subsequence,
\begin{itemize}
\item there exist $\alpha,\beta>0$ such that
$$\alpha < u_n <\beta$$
\item the sequence $\rho_n$ converge to a Fuchsian representation $\rho$.
\end{itemize}
\end{proposition}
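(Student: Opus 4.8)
The plan is to extract the two bounds and the convergence of the $\rho_n$ from the uniform convergence of the quotient surfaces, using the ``barrier'' given by the hyperboloid and the fact that the groups act cocompactly on $\H^2$. The central objects are the radial functions $u_n$ and the translation lengths of the $\rho_n(\gamma)$.

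\textbf{Step 1: the upper bound $u_n<\beta$.} Each $G_{u_n}$ is a future convex spacelike surface meeting every future timelike half-line exactly once (Fuchsian case), so the support plane of $G_{u_n}$ at a point lies in the past of the parallel tangent plane to the hyperboloid it touches. Equivalently, $K_n$ is contained in the future of every spacelike support plane. The uniform convergence of $(\H^2/\rho_n(\pi_1S),\bar d_{u_n})$ to the compact space $(S,d)$ gives a uniform upper bound on diameters: there is $D$ with $\bar d_{u_n}(p,q)\le D$ for all $p,q$ and all $n$. I would argue that a point of $G_{u_n}$ with $u_n$ very large forces, via the reversed triangle inequality \eqref{reversed ti} and the cocompactness of $\rho_n(\pi_1S)$ on $\H^2$, a pair of nearby (in $\H^2$) points of $G_{u_n}$ whose induced Minkowski distance is large — contradicting the diameter bound. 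Concretely: if $u_n(x_0)\to\infty$ along a subsequence, translate by the group so that $x_0$ lies in a fixed compact fundamental domain $F\subset\H^2$; since $u_n$ is $\rho_n(\pi_1S)$-invariant it stays large on the $\rho_n$-orbit, and because the orbit points accumulate (the group is nonelementary and cocompact) one produces two orbit points projecting into $F$ whose spacelike distance along $G_{u_n}$ blows up (this is the mechanism of Figure~\ref{fig1} read backwards, and it uses that the support planes are uniformly spacelike so we stay in the regime of the reversed inequality). This contradicts $\bar d_{u_n}\le D$. Hence $u_n<\beta$ uniformly.

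\textbf{Step 2: convergence of $\rho_n$ (and the lower bound).} With $u_n<\beta$, Lemma~\ref{lem: comp hyp met12} gives $d_{u_n}\le\beta d_{\H^2}$, hence on the quotient $\bar d_{u_n}\le\beta\,\bar d_{\H^2,\rho_n}$; but $\bar d_{u_n}$ also dominates $\bar d_{\H^2,\rho_n}$ up to a constant only once we have the lower bound, so I would first extract convergence of $\rho_n$ from a lower bound on the systole. The hyperbolic metrics $\H^2/\rho_n(\pi_1S)$ have uniformly bounded diameter as well, since from the diameter bound on $\bar d_{u_n}$ together with $\bar d_{u_n}\le\beta\bar d_{\H^2,\rho_n}$ we do \emph{not} directly get it — instead I use the reverse: the area of $\H^2/\rho_n(\pi_1S)$ is the topological constant $2\pi(2g-2)$ (Gauss--Bonnet), so a short closed geodesic would force a long thin collar, hence large hyperbolic diameter, hence (via Corollary~\ref{lem:upper hyp dist} applied with the lower bound $u_n>\alpha$, once obtained) large $\bar d_{u_n}$-diameter. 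To break the circularity I obtain the lower bound $u_n>\alpha$ first: if $u_n(x_0)\to 0$, then since $-1/U_n$ is convex (Lemma~\ref{lem: U}) and $u_n<\beta$, the graph $G_{u_n}$ is squeezed toward $\partial I^+(0)$ near $x_0$, and I claim this forces the induced distance between two fixed points to tend to $0$, contradicting that $\bar d_{u_n}\to\bar d$ with $(S,d)$ non-degenerate (its diameter is a fixed positive number). Having $\alpha<u_n<\beta$, Corollary~\ref{lem:upper hyp dist} gives two-sided bi-Lipschitz control of $\bar d_{u_n}$ by $\bar d_{\H^2,\rho_n}$, so the uniform convergence of $\bar d_{u_n}$ forces the $\H^2/\rho_n(\pi_1S)$ to have uniformly bounded diameter \emph{and} systole bounded below, i.e. to stay in a compact part of Teichm\"uller space (Mumford compactness); since the $\rho_n$ are normalized in the sense of Definition~\ref{def norm}, the limit point is a genuine Fuchsian representation $\rho$ and $\rho_n\to\rho$.

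\textbf{Main obstacle.} The delicate point is Step 1, the uniform upper bound $u_n<\beta$: one must convert ``$u_n$ large somewhere'' into ``induced intrinsic diameter large'' while controlling the interplay between the (bounded) combinatorial/metric data on the quotient and the unbounded radial coordinate, using only the reversed Cauchy--Schwarz / reversed triangle inequality and cocompactness of the group — and without any Blaschke-type selection or Busemann--Feller lemma, which as emphasized in the introduction are unavailable in $\R^{2,1}$. The role of normalization and of the Gauss--Bonnet area constant is to keep the groups nonelementary and their actions uniformly cocompact so that orbit points of a fixed fundamental domain genuinely spread out; this is exactly the content of Section~\ref{section:radial-functions} referred to in the introduction, and I would lean on it to make the argument quantitative.
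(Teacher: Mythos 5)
Your plan does not work as stated, and the order in which you try to establish the three facts cannot be made to work. The paper proves them in the order \emph{lower bound on $u_n$, then convergence of $\rho_n$, then upper bound on $u_n$}, and this order is forced by the very phenomenon illustrated in Figure~\ref{fig1}. Your Step~1 (upper bound first) has a real gap: if $u_n(x_0)\to\infty$ at a point $x_0$ of a fundamental domain, you cannot conclude that induced distances blow up, because the translation lengths of the nearest nontrivial group elements might go to $0$ simultaneously. Concretely, there is no a priori lower bound on $d_{\H^2}(x_0,\rho_n(\gamma)x_0)$ — the systole of $\H^2/\rho_n(\pi_1S)$ might degenerate — so the chord $\|u_n(x_0)x_0 - u_n(x_0)\rho_n(\gamma)x_0\|_-=\sqrt 2\,u_n(x_0)(\cosh d_{\H^2}(x_0,\rho_n(\gamma)x_0)-1)^{1/2}$ can stay bounded even as $u_n(x_0)\to\infty$. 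Your Step~2a (lower bound by squeezing toward $\partial I^+(0)$, using $u_n<\beta$) has exactly the dual gap: if translation lengths blow up, the induced distance need not collapse even if $u_n\to 0$; this is precisely what Figure~\ref{fig1} shows. Acknowledging in your ``Main obstacle'' paragraph that this interplay is delicate is not the same as resolving it.

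What you are missing is the mechanism the paper uses to break the circularity, which is genuinely different from Mumford compactness. For the \emph{lower} bound, the paper exploits Gauss--Bonnet in the opposite direction from you: the area of $\H^2/\rho_n(\pi_1S)$ is a topological constant, so (Lemma~\ref{lemma:buser-short-geodesic}) there is a universal $R$ such that every $x\in\H^2$ has some nontrivial $\gamma$ with $d_{\H^2}(x,\rho_n(\gamma)x)<R$; combining this \emph{upper} bound with the \emph{lower} bound $d_{u_n}(x,\rho_n(\gamma)x)\geq G$ coming from uniform convergence (Corollary~\ref{lem:min-length-gamma>K-for-all-n}, via Section~\ref{sec:met on univ cov}) and the reversed triangle inequality~\eqref{reversed ti} gives $G\leq\sqrt 2\,u_n(x_n)(\cosh R-1)^{1/2}$, hence $u_n>\alpha$. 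For the convergence of $\rho_n$, the paper does \emph{not} invoke the two-sided estimate of Corollary~\ref{lem:upper hyp dist} (which needs $\beta$, not yet available). Instead it analyzes the squared Lorentzian distance $f_n(y)=-\langle y,y\rangle_-$ along shortest paths on $P_n$ between a maximum point of $f_n$ and its $\rho_n(\gamma)$-image: the calculus in Fact~\ref{lemma:positive-jump} and Lemma~\ref{lemma:estimate-fP} yields $\int_0^{l_\gamma(n)}|\langle c,c'\rangle_-|\leq l_\gamma(n)^2/2$, which plugged into Lemma~\ref{eq:principale} bounds $L_{\rho_n}(\gamma)$ for each $\gamma$ using \emph{only} $\alpha$. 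With Proposition~\ref{lemma-teichmuller-curves} (Fricke coordinates), this forces $(\rho_n)$ into a compact part of Teichm\"uller space. Only \emph{after} $\rho_n$ converges does one obtain a common compact fundamental domain $C$, and the upper bound $u_n<\beta$ then follows from the requirement that chords between points of the surface over $C$ be spacelike (Lemma~\ref{lem:edge spacelike}), together with the already-established lower bound and the fact that $L_{\rho_n}(\gamma)$ cannot become arbitrarily small on a fixed compact hyperbolic surface. You should reorganize the proof in this order and supply the $f_n$-argument; without it your Steps~1 and~2a remain ungrounded.
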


Under the hypothesis of Proposition~\ref{prop finale},
 there are homeomorphisms
$$\psi_n:\H^2/\rho_n(\pi_1S) \to S $$
such that, if we denote by $d_n$ the push-forward by $\psi_n$ of  $(\H^2/\rho_n(\pi_1S),d_{u_n})$, then on $S$, the sequence $(d_n)_n$ uniformly converge to $d$.
By equivariance, if $\tilde d_n$ is the lift of $d_n$ to the universal cover of $S$, and $\tilde\psi_{n}$ is a lift of $\psi_{n}$,
for any $x,y\in \H^2$,
\begin{equation}
\label{eq:equivariance2}
d_{u_n}(x,\rho_n(\gamma)y)=\tilde d_n (\tilde\psi_{n}(x), \gamma.\tilde\psi_{n}(y))~.
\end{equation}

\subsection{Lower bound}\label{section:radial-functions}

In this Paragraph we will show that there is a uniform lower bound for the functions $u_{n}$, $n\in\mathbb{N}$.
First, with \eqref{eq:equivariance2}, Corollary~\ref{lem:min-length-gamma>K-for-all-n} translates as follows.

\begin{corollary}
 There exists $G>0$ and $N>0$ such that, for every element $\gamma\in\pi_{1}S\setminus{\{0\}}$,  for any $n>N$ and for any $x\in\H^ 2$:
\begin{equation}
\label{ineqality:min-length-gamma>K-for-all-n}
d_{u_n}(x,\rho_n(\gamma)x)\geq G~.
\end{equation}
\end{corollary}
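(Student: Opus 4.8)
The plan is to transport Corollary~\ref{lem:min-length-gamma>K-for-all-n} through the equivariance identity \eqref{eq:equivariance2}. By the discussion preceding this corollary, the push-forward distances $d_n$ on the fixed compact surface $S$ converge uniformly to $d$; hence Corollary~\ref{lem:min-length-gamma>K-for-all-n} applies with $M=S$ and produces constants $G>0$ and $N>0$ such that for every $\gamma\in\pi_1S\setminus\{0\}$, every $z\in\tilde S$ and every $n>N$ one has $\tilde d_n(z,\gamma.z)\geq G$. The essential point, already built into that corollary, is that $G$ and $N$ depend neither on $\gamma$ nor on $z$.

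Next I would fix $n>N$, a nontrivial element $\gamma\in\pi_1S$, and an arbitrary point $x\in\H^2$. Since $\tilde\psi_n:\H^2\to\tilde S$ is a lift of the homeomorphism $\psi_n$, the point $\tilde\psi_n(x)$ lies in $\tilde S$; applying \eqref{eq:equivariance2} with $y=x$ then gives
\[
d_{u_n}(x,\rho_n(\gamma)x)=\tilde d_n(\tilde\psi_n(x),\gamma.\tilde\psi_n(x))\geq G~.
\]
As $x\in\H^2$ was arbitrary, this is exactly the claimed inequality.

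I do not expect any real obstacle here: all the content sits in Corollary~\ref{lem:min-length-gamma>K-for-all-n} (hence, further upstream, in Lemma~\ref{lem:ratcliffe} together with the uniform-convergence machinery of Section~\ref{sec 2}), and the only observation needed is that, upon setting $y=x$, the left-hand side of \eqref{eq:equivariance2} is precisely the translation distance $d_{u_n}(x,\rho_n(\gamma)x)$ to be bounded, with uniformity in $x$ automatic because $\tilde\psi_n$ maps $\H^2$ onto $\tilde S$. The write-up is thus essentially a one-line substitution into \eqref{eq:equivariance2}.
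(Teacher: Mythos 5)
Your proposal is correct and is precisely the paper's intended argument: the paper states this corollary simply as a "translation" of Corollary~\ref{lem:min-length-gamma>K-for-all-n} via \eqref{eq:equivariance2}, which is exactly the substitution $y=x$ that you carry out. Nothing more is needed.
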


We also need the following lemma for the hyperbolic metrics.

\begin{lemma}\label{lemma:buser-short-geodesic}
There exists $R>0$ (depending only on the genus of $S$) such that, for any $n\in \N$, for any $x\in \H^2$, there exists $\gamma_n \in \pi_1S\setminus \{0\}$, such that
$$d_{\H^2}(x,\rho_n(\gamma_n)x) < R~. $$
\end{lemma}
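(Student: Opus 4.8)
The statement is a uniform version of the classical fact that a hyperbolic surface of fixed genus $g$ contains a closed geodesic of length at most some constant depending only on $g$ (a Bers/Margulis-type bound). The point to be careful about is that the representations $\rho_n$ are only assumed normalized, not convergent, so we genuinely need a bound that is uniform over the whole Teichm\"uller space (or at least over all possible $\rho_n$), together with the equivariant translation into a statement about arbitrary $x\in\H^2$ rather than about the systole.

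\textbf{Step 1: Reduce to a statement about the quotient surface.} Fix $n$ and write $\Sigma_n=\H^2/\rho_n(\pi_1S)$, a closed hyperbolic surface of genus $g$. For $x\in\H^2$ projecting to $\bar x\in\Sigma_n$, the quantity $\min_{\gamma\neq 0} d_{\H^2}(x,\rho_n(\gamma)x)$ is exactly twice the injectivity radius of $\Sigma_n$ at $\bar x$ (the length of the shortest homotopically nontrivial loop based at $\bar x$). So it suffices to produce a constant $R=R(g)$ such that every closed hyperbolic surface of genus $g$ has injectivity radius at most $R/2$ at \emph{every} point.

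\textbf{Step 2: The area argument.} A closed hyperbolic surface $\Sigma$ of genus $g$ has area $2\pi(2g-2)$ by Gauss--Bonnet. If the injectivity radius at some point $\bar x$ were at least $\tau$, then the metric ball $B(\bar x,\tau)$ would embed isometrically into $\Sigma$, hence $\operatorname{Area}(B(\bar x,\tau))\leq 2\pi(2g-2)$. But the area of a hyperbolic ball of radius $\tau$ is $2\pi(\cosh\tau-1)$, which is independent of the surface and tends to $+\infty$ with $\tau$. Therefore $2\pi(\cosh\tau-1)\leq 2\pi(2g-2)$, i.e. $\cosh\tau\leq 2g-1$, so $\tau\leq \arccosh(2g-1)$. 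Thus we may take $R=2\arccosh(2g-1)$, which depends only on the genus.

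\textbf{Step 3: Conclude.} Given $n$ and $x\in\H^2$, apply Step 2 to $\Sigma_n$ at the projection of $x$: the injectivity radius there is at most $R/2$, so there is a nontrivial $\gamma_n\in\pi_1S\setminus\{0\}$ with $d_{\H^2}(x,\rho_n(\gamma_n)x)<R$, as required. The \emph{main (very mild) obstacle} is merely being careful that the bound is uniform in $n$: this is automatic because the area of a hyperbolic ball and the total area $2\pi(2g-2)$ are both independent of the particular Fuchsian representation, so no compactness or convergence of the $\rho_n$ is needed. (If one prefers, one can instead quote a standard Bers-type constant from the literature, but the self-contained ball-packing argument above is shorter and gives an explicit value.)
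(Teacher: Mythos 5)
Your proof is correct and is essentially the same as the paper's: both bound the injectivity radius by the area argument, using that a hyperbolic disc of radius $r$ has area $2\pi(\cosh r-1)$ while the surface has area $2\pi(2g-2)$ by Gauss--Bonnet. The paper phrases the embedded disc as the largest disc centred at $x$ inside the Dirichlet fundamental polygon rather than via the injectivity radius, but this is the same object, so the two arguments coincide.
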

\begin{proof}
Suppose the converse, i.e., for $R$ arbitrary large there is however a point $x$ and an index $n$ such that for any $\gamma\in\pi_{1}S\setminus\{0\}$ the inequality
\begin{equation}
\label{ineq:converse-inequality}
d_{\H^2}(x,\rho_n(\gamma)x) \geq R
\end{equation}
holds. Let us consider the Dirichlet polygon $D$ for
$\rho_n(\pi_{1}S)$ centred at $x$. Let $r$ be the radius of the largest disc centred at $x$ contained in $D$. Then there is $\gamma_x$ such that $r= d_{\H^2}(x,\rho_n(\gamma_x)x)/2$. By~(\ref{ineq:converse-inequality}) we have that $r\geq R/2$. Therefore, the area of the disc would be arbitrarily large, that is impossible, as it is less than the area of $D$, and this one  depends only on the genus of $S$.
\end{proof}

\begin{proposition}\label{lem:u_n>alpha}
Under the hypothesis of Proposition~\ref{prop finale},
 there exists  $\alpha>0$ such that,  for any $n$ up to extract a subsequence and for any $x\in \H^2$,
\begin{equation*}
\label{ineqality:u_n>alpha}
u_{n}(x)>\alpha~.
\end{equation*}
\end{proposition}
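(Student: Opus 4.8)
The plan is to bound from below the minima $m_n:=\min_{\H^2}u_n$; these exist because $u_n$ is continuous and $\rho_n$-invariant over the compact quotient $\H^2/\rho_n(\pi_1 S)$, and each is positive, so it suffices to get a uniform positive lower bound on $m_n$ for all large $n$. First I would fix $x_n\in\H^2$ with $u_n(x_n)=m_n$ and set $p_n:=m_n x_n\in G_{u_n}$, a point lying on the hyperboloid $m_n\H^2$. By Lemma~\ref{lemma:buser-short-geodesic} there is $\gamma_n\in\pi_1 S\setminus\{0\}$ with $d_{\H^2}(x_n,\rho_n(\gamma_n)x_n)<R$, where $R$ depends only on the genus of $S$. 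Writing $y_n:=\rho_n(\gamma_n)x_n$ and $q_n:=\rho_n(\gamma_n)p_n$, the invariance $u_n\circ\rho_n(\gamma_n)=u_n$ and the linearity of $\rho_n(\gamma_n)$ give $q_n=m_n y_n\in G_{u_n}\cap m_n\H^2$, and a direct computation yields
\[
\langle q_n-p_n,\,q_n-p_n\rangle_-\;=\;m_n^2\langle x_n-y_n,\,x_n-y_n\rangle_-\;=\;2m_n^2\bigl(\ch d_{\H^2}(x_n,y_n)-1\bigr)\;<\;2m_n^2(\ch R-1),
\]
so in particular $[p_n,q_n]$ is a spacelike segment.

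Next I would bound $d_{u_n}(x_n,y_n)$ from above by the Minkowski length $\|q_n-p_n\|_-$ of this segment. Let $\sigma_n\colon[0,L]\to\H^2$ be the hyperbolic geodesic from $x_n$ to $y_n$ and consider the curve $c_n(t)=u_n(\sigma_n(t))\sigma_n(t)$ on $G_{u_n}$; by the definition of the length structure in \eqref{eq:length}, its Minkowski length equals $\mathfrak{L}_{u_n}(\sigma_n)$, hence $d_{u_n}(x_n,y_n)\le\mathfrak{L}_{u_n}(\sigma_n)$. Since $\sigma_n$ lies in the timelike $2$-plane $Q_n:=\mathrm{span}(x_n,y_n)$, so does $c_n$, and being tangent to the spacelike surface $G_{u_n}$ it is a spacelike curve in the Minkowski plane $Q_n$ joining $p_n$ to $q_n$; moreover it runs along a monotone arc of the spacelike convex curve $G_{u_n}\cap Q_n$, so the edges of any inscribed polygon are spacelike (they are chords of $G_{u_n}$) and lie in one spacelike cone of $Q_n$. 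Applying the reversed triangle inequality \eqref{reversed ti} repeatedly along such a polygon shows its total length is at most $\|q_n-p_n\|_-$, and passing to the limit (using that $\|\cdot\|_-$ is concave on a spacelike cone) gives $\mathfrak{L}_{u_n}(\sigma_n)\le\|q_n-p_n\|_-$. Hence $d_{u_n}(x_n,y_n)<m_n\sqrt{2(\ch R-1)}$.

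Combining this with inequality~\eqref{ineqality:min-length-gamma>K-for-all-n} of the preceding Corollary: for $n>N$ one has $G\le d_{u_n}(x_n,\rho_n(\gamma_n)x_n)<m_n\sqrt{2(\ch R-1)}$, so $m_n>G/\sqrt{2(\ch R-1)}$. As $u_1,\dots,u_N$ each have a positive minimum on $\H^2$, setting $\alpha:=\tfrac12\min\{\,G/\sqrt{2(\ch R-1)},\,m_1,\dots,m_N\,\}>0$ gives $u_n>\alpha$ on $\H^2$ for every $n$ (here no passage to a subsequence is actually needed).

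The hard part is the inequality $d_{u_n}(x_n,y_n)\le\|q_n-p_n\|_-$: since there is no Busemann--Feller lemma in Minkowski space, an induced distance cannot be estimated by projecting onto a convex set, so instead one must build an explicit competitor curve and control its Minkowski length. The two structural facts that make this work are that the curve stays inside a single timelike plane — where the reversed triangle inequality is available and, with a coherently oriented family of spacelike chords, can be iterated — and that the two minimum points $p_n,q_n$ lie on the \emph{same} hyperboloid $m_n\H^2$ at hyperbolic distance bounded only in terms of the genus, thanks to cocompactness of $\rho_n(\pi_1 S)$ (this is exactly Lemma~\ref{lemma:buser-short-geodesic}). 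Along the way one must also check that any two points of the spacelike convex surface $G_{u_n}$ are joined by a spacelike segment, which is what legitimates the iterated use of \eqref{reversed ti}; this follows from the support planes of $G_{u_n}$ being spacelike.
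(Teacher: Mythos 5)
Your proof is correct and follows essentially the same route as the paper's: use Lemma~\ref{lemma:buser-short-geodesic} to find a short hyperbolic translate, compare $d_{u_n}$ of the pair to the Minkowski length of the planar section of $G_{u_n}$ through the two points and the origin, iterate the reversed triangle inequality to bound this by the chord length $\sqrt{2}\,u_n(x_n)\,(\ch R -1)^{1/2}$, and play this off against the lower bound $G$ from Corollary~\ref{lem:min-length-gamma>K-for-all-n}. The only differences are cosmetic: you argue directly at the minimizing point $x_n$ rather than by contradiction, and you phrase the chord-length bound via inscribed polygons and a limit, whereas the paper notes that in the polyhedral case the planar section $\sigma$ is already a broken line so the reversed triangle inequality applies in finitely many steps.
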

\begin{proof}
Let us suppose the converse, i.e. that for an arbitrary $\epsilon > 0$, there exists $n$ and $x_n\in\H^2$  with
\begin{equation}\label{eq:epsilon un}u_n(x_n)<\epsilon~.\end{equation}
Let the element $\gamma_n\in \pi_1S$ from Lemma~\ref{lemma:buser-short-geodesic} such that
\begin{equation}\label{Mg}d_{\H^2}(x_n,\rho_n(\gamma_n)x_n)\leq R~. \end{equation}

Let $\sigma$ be the intersection of the graph of $u_n$ with the Minkowski $2$-plane which passes through the points $x_n$, $\rho_{n}(\gamma_{n})x_{n}$, and the origin of the coordinate system in $\R^{2,1}$; also, let $L(\sigma)$ be its length. In particular,
\begin{equation}
\label{ineqality:distance<length-curve-sigma-n}
d_{u_n}(x_{n},\rho_{n}(\gamma_{n}) x_{n})\leq L(\sigma)~.
\end{equation}

 By construction, $\sigma$ is a piecewise linear spacelike curve in a Minkowski $2$-plane. Applying several times the inverse triangle inequality for Minkowski $2$-plane (see \eqref{reversed ti}) and using \eqref{eq:un inv}, we get
\begin{equation}
\label{ineqality:sigma-n-inverse-triangle}
L(\sigma)\leq\|u_n(x_n)\rho_{n}(\gamma_{n}) x_{n}-u_n(x_n)x_{n}\|_{-}~.
\end{equation}

Looking at Figure~\ref{fig1}, we write down:
\begin{equation*}
\label{ineqality:estimate-mink-by-hyp-n}
\|u_n(x_n)\rho_{n}(\gamma_{n}) x_{n}-u_n(x_n)x_{n}\|_{-}=\sqrt{2} u_n(x_n) (\cosh (d_{\H^2}(x_n,\rho_n(\gamma_n)x_n))) -1)^{1/2}~.
\end{equation*}
So with \eqref{ineqality:distance<length-curve-sigma-n}, \eqref{ineqality:sigma-n-inverse-triangle}, \eqref{ineqality:min-length-gamma>K-for-all-n}, \eqref{eq:epsilon un} and \eqref{Mg},
\begin{equation*}
\label{ineqality:combined-ineq-n}
G \leq\sqrt{2} \epsilon (\cosh (R) -1)^{1/2}~
\end{equation*}
that is impossible as $\epsilon$ is arbitrarily small.
\end{proof}

\subsection{Convergence of groups} \label{paragraph:convergence-of-groups}

Here we prove the main step in the proof of Theorem~\ref{thm:main 2}.
We adapt an argument that was developed by J.-M.~Schlenker in \cite{Schlenker2004} for convex surfaces in de Sitter space.

\begin{proposition}\label{cor gp cv}
Under the hypothesis of Proposition~\ref{prop finale},
up to extract a subsequence, the sequence  $(\rho_n)_n$ converges
to a Fuchsian representation $\rho$.
\end{proposition}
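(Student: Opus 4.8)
The plan is to show that the marked hyperbolic surfaces $\H^2/\rho_n(\pi_1S)$ cannot degenerate. By Gauss--Bonnet they all have hyperbolic area $2\pi(2g-2)$, so the only way for $(\rho_n)_n$ to have \emph{no} subsequence converging to a Fuchsian representation is through a \emph{pinching}: after passing to a subsequence there are nontrivial $\gamma_n\in\pi_1S$, represented by simple closed curves, with hyperbolic translation lengths $\ell_n:=\ell(\rho_n(\gamma_n))\to 0$. Conversely, if every closed geodesic of every $\H^2/\rho_n(\pi_1S)$ has length bounded below, the injectivity radii are bounded below, so by Mumford compactness the surfaces stay in a compact part of moduli space; together with the normalization of Definition~\ref{def norm} and the markings carried by the homeomorphisms $\psi_n$ realizing the uniform convergence, one then extracts a subsequence with $\rho_n\to\rho$ for a Fuchsian $\rho$, which is the desired conclusion. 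So I would assume a pinching occurs and derive a contradiction.

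The first step is to localize and blow up. Let $x_n$ lie on the axis of $\rho_n(\gamma_n)$, so that $d_{\H^2}(x_n,\rho_n(\gamma_n)x_n)=\ell_n\to 0$. On one hand, by the equivariance \eqref{eq:equivariance2} together with the uniform lower bound \eqref{ineqality:min-length-gamma>K-for-all-n} (the translation of Corollary~\ref{lem:min-length-gamma>K-for-all-n}), one has $d_{u_n}(x_n,\rho_n(\gamma_n)x_n)\ge G$ for $n$ large. On the other hand, repeating verbatim the estimate in the proof of Proposition~\ref{lem:u_n>alpha} --- intersecting $G_{u_n}$ with the Minkowski $2$-plane through $0$, $x_n$ and $\rho_n(\gamma_n)x_n$, applying the reversed triangle inequality \eqref{reversed ti} and the invariance \eqref{eq:un inv} --- one gets $d_{u_n}(x_n,\rho_n(\gamma_n)x_n)\le\sqrt 2\,u_n(x_n)\,(\cosh\ell_n-1)^{1/2}$. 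Hence $u_n(x_n)\ge G\,/\,\big(\sqrt 2\,(\cosh\ell_n-1)^{1/2}\big)\to\infty$: the radial functions blow up along the core of the pinching collar.

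The second step is to propagate this blow-up into a contradiction with boundedness of the diameter. Since $(\H^2/\rho_n(\pi_1S),\bar d_{u_n})$ converges uniformly to the fixed compact space $(S,d)$, the diameters are bounded, say by $D$. By the collar lemma the geodesic of $\gamma_n$ carries an embedded collar whose width tends to $+\infty$; following Schlenker's argument \cite{Schlenker2004}, I would use the convexity of $-1/U_n$ (Lemma~\ref{lem: U}) to control the tilt of the support planes of $G_{u_n}$ all along this collar, and thereby convert the blow-up of $u_n$ on the core into a lower bound, tending to $+\infty$, for the $\bar d_{u_n}$-distance between the core circle and a circle near the mouth of the collar --- contradicting the bound $D$. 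This rules out pinching, and by the first paragraph $(\rho_n)_n$ subconverges to a Fuchsian $\rho$.

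The main obstacle is exactly this second step. In Minkowski space there is no triangle inequality and no Busemann--Feller lemma, and, as Figure~\ref{fig1} illustrates, a spacelike path along which $u_n$ decreases at nearly the maximal admissible rate has nearly zero Minkowski length, so the blow-up of $u_n$ on the core does \emph{not} by itself force nearby induced distances to be large. The place where convexity of $-1/U_n$ has to be used in an essential way is to bound quantitatively how close to null the surface $G_{u_n}$ can become across the wide collar; once this is available, the contradiction with the diameter bound is immediate. Everything else --- Gauss--Bonnet and the fixed area, the collar lemma, and the reversed--triangle--inequality estimate of the first step --- is essentially bookkeeping.
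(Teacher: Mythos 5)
Your proposal takes a genuinely different route from the paper's, and, as you yourself flag, the crucial second step is not actually carried out --- the gap there is real, not a matter of bookkeeping.

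Your first step is sound: assuming a pinching $\ell_n=\ell(\rho_n(\gamma_n))\to 0$, the chain $G\le d_{u_n}(x_n,\rho_n(\gamma_n)x_n)\le\sqrt2\,u_n(x_n)(\cosh\ell_n-1)^{1/2}$ forces $u_n(x_n)\to\infty$ on the core; this is the same estimate as in Proposition~\ref{lem:u_n>alpha}, just read in the other direction. But your second step --- ``use convexity of $-1/U_n$ to control the tilt across the collar and force a diameter blow-up'' --- is precisely what does not follow from the tools already in play. Lemma~\ref{lem tech}, the only quantitative tilt bound available, requires both a lower and an \emph{upper} bound on $u_n$, and here you have just shown that $u_n$ escapes to infinity. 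As Figure~\ref{fig1} and your own last paragraph point out, a spacelike curve with $u_n$ huge but dropping near-maximally has tiny Minkowski length, so a blow-up of $u_n$ does not on its own contradict the bounded diameter. You would need to produce a new estimate controlling $\langle\nu,\nu'\rangle_-$ without an a priori upper bound on $u_n$; you have not done so, and the collar lemma alone does not supply it. There is a secondary gap in the reduction to pinching: Mumford compactness is a statement about moduli space, and the upgrade to Teichm\"uller-space convergence via ``the markings carried by $\psi_n$'' needs more than a nod.

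The paper avoids the pinching dichotomy entirely and proceeds by a direct upper bound on every translation length. It introduces $f_n(y)=-\langle y,y\rangle_-$ on the polyhedral surface $P_n$, picks a basepoint $y_M(n)$ where $f_n$ attains its maximum, and exploits Fact~\ref{lemma:positive-jump} (namely that $(f_n\circ c)''=-2$ between edges while $(f_n\circ c)'$ only jumps up at edges) to prove the key Lemma~\ref{lemma:estimate-fP}: along a shortest path $c$ of length $l_\gamma(n)$ from $y_M(n)$ to $\rho_n(\gamma)y_M(n)$ one has $\int_0^{l_\gamma(n)}|\langle c,c'\rangle_-|\le l_\gamma(n)^2/2$. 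Plugged into Lemma~\ref{eq:principale}, and using the already-established lower bound $u_n>\alpha$ together with the uniform bound on $l_\gamma(n)$ coming from the $\tilde d_{\sup}$-distance, this yields $L_{\rho_n}(\gamma)\le B_\gamma/\alpha+B_\gamma^2/(2\alpha^2)$ for every $\gamma$. Compactness in Teichm\"uller space is then immediate from Proposition~\ref{lemma-teichmuller-curves} (the injective proper length map of \cite{FLP2012}), with no collar lemma and no upper bound on $u_n$. The point you should take away is that choosing the basepoint at a maximum of $f_n$ is exactly what replaces the tilt bound you were missing: at the maximum $\langle c,c'\rangle_-$ starts at zero, and convexity of the polyhedral surface controls how fast it can grow.
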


Let us denote by $P_n$ the graph of $u_n$, and by $f_n$ the restriction to $P_n$
of the squared distance from the origin in Minkowski space, i.e. for each $y\in P_n$ we define
\begin{equation}\label{eq:def-function-f-P}
f_n(y)=-\langle y,y\rangle_{-}~.
\end{equation}
The function $f_n$ is positive and invariant under the action of
$\rho_n(\pi_1S)$, which acts cocompactly on $P_n$, hence it attains
its extremal values.

\begin{fact}\label{lemma:positive-jump}
Let $c$ be an arc-length parametrized shortest path on $P_n$.
\begin{enumerate}
\item  $(f_n\circ c)'$ has a positive  jump at its singular points.
\item At non-singular points,  $(f_n\circ c)''=-2$ and $(f_n\circ c)^{(n)}=0$ for $n\geq 3$.
\item $f_n\circ c$ is regular at local maxima.
\end{enumerate}
\end{fact}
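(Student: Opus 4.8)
The plan is to reduce everything to one computation plus one geometric input. Throughout put $q=c(s_0)$; at a singular point $s_0$ let $v^-=c'(s_0^-)$ and $v^+=c'(s_0^+)$ be the one‑sided velocities, both spacelike of unit Minkowski norm since $c$ is arc‑length.

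First I would dispose of (2) and, simultaneously, compute the jump in (1). On any subinterval on which $c$ stays in a single face of $P_n$ — equivalently on which $c(s)=c(s_0)+(s-s_0)w$ is an affine segment with $w=c'(s_0)$, $\langle w,w\rangle_-=1$ — one has $f_n(c(s))=-\langle c(s),c(s)\rangle_-=f_n(c(s_0))-2(s-s_0)\langle c(s_0),w\rangle_- -(s-s_0)^2$, a degree‑two polynomial with leading coefficient $-1$; hence $(f_n\circ c)'(s)=-2\langle c(s),w\rangle_-$, $(f_n\circ c)''\equiv-2$, and $(f_n\circ c)^{(k)}\equiv0$ for $k\geq3$, which is (2). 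Applying this on each side of a singular point gives $(f_n\circ c)'(s_0^\pm)=-2\langle q,v^\pm\rangle_-$, so the jump of $(f_n\circ c)'$ at $s_0$ equals $2\langle q,\,v^- - v^+\rangle_-$, and (1) becomes the assertion that $\langle q,\,v^- - v^+\rangle_->0$ at every genuine corner ($v^-\neq v^+$).

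For this, the incoming segment of $c$ lies in a face of $P_n$, hence in a spacelike support plane $\Pi^-$ of the future‑convex body $K_n$; let $\xi^-$ be its future‑pointing unit normal, so $\langle y-q,\xi^-\rangle_-\leq0$ for all $y\in K_n$. Since $v^-$ is tangent to $\Pi^-$ and $q+\varepsilon v^+\in P_n\subset K_n$ for small $\varepsilon>0$, this gives $\langle v^-,\xi^-\rangle_-=0$ and $\langle v^+,\xi^-\rangle_-\leq0$, hence $\langle v^- - v^+,\,\xi^-\rangle_-\geq0$. The key input is that, $c$ being a shortest path on the CAT(0) surface $P_n$, the broken line cannot ``turn back'' at $q$: the directions $v^-$ and $v^+$ span a Minkowski $2$‑plane and lie in one time‑sheet of it, equivalently $\langle v^-,v^+\rangle_-\geq1$, with equality only if $v^-=v^+$. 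When $q$ is interior to an edge $\ell$ this is elementary — the geodesic crosses $\ell$ straight, so in the two Minkowski‑orthonormal frames $(e,w^\pm,\xi^\pm)$ adapted to the edge and the faces one has $v^-=\cos\theta\,e-\sin\theta\,w^-$, $v^+=\cos\theta\,e+\sin\theta\,w^+$, and a short computation (using that $K_n$ lies on the future side of both support planes) yields $\langle w^-,w^+\rangle_-=-\cosh\Delta$ for the hyperbolic dihedral angle $\Delta$, whence $\langle v^-,v^+\rangle_-=\cos^2\theta+\sin^2\theta\cosh\Delta\geq1$; when $q$ is a vertex one argues similarly, invoking the cone‑angle inequality. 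Granting $\langle v^-,v^+\rangle_-\geq1$, we get $\langle v^- - v^+,\,v^- - v^+\rangle_-=2-2\langle v^-,v^+\rangle_-\leq0$, so $v^- - v^+$ is a nonzero causal vector; together with $\langle v^- - v^+,\xi^-\rangle_-\geq0$ and the standard fact that a nonzero causal vector with non‑negative Minkowski product against the future‑timelike $\xi^-$ is past‑directed, $v^- - v^+$ is past‑causal. Since $q\in I^+(0)$ is future‑timelike, the Minkowski product of $q$ with a past‑directed causal vector is strictly positive, which proves (1). Then (3) is immediate: a local maximum of $f_n\circ c$ at a singular point would force $(f_n\circ c)'(s_0^-)\geq0\geq(f_n\circ c)'(s_0^+)$, contradicting the positive jump, so local maxima occur at non‑singular points, where by (2) the function is smooth with second derivative $-2<0$.

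The main obstacle is the inequality $\langle v^-,v^+\rangle_-\geq1$ at a \emph{vertex} of $P_n$: one must turn the intrinsic shortest‑path condition (angle $\geq\pi$ on both sides, total cone angle $>2\pi$) into a statement about the ambient configuration of the several face‑planes meeting at the vertex. The edge case and the final causal‑geometry step are routine.
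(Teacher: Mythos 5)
Your reconstruction matches the paper's proof in structure, but is far more detailed where the paper is terse.

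For (2) and (3) you reproduce exactly the paper's computations: on a face $c$ is an affine segment with unit spacelike velocity, giving $(f_n\circ c)''=-2$ and vanishing higher derivatives, and a local maximum cannot sit at a singular point because that would force a non\-positive jump $(f_n\circ c)'(s_0^+)\leq 0 \leq (f_n\circ c)'(s_0^-)$, contradicting (1). For (1) the paper offers only the clause ``the singular points of $f_n\circ c$ are when $c$ crosses an edge, and the result follows by convexity of $P_n$,'' whereas you supply the actual mechanism: the jump is $2\langle q, v^--v^+\rangle_-$, one shows $v^--v^+$ is past causal, and a past causal nonzero vector pairs strictly positively with the future timelike $q$. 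Your edge computation ($v^\pm=\cos\theta\,e\mp\sin\theta\,w^\mp$ with $\langle w^-,w^+\rangle_-=-\cosh\Delta$, hence $\langle v^-,v^+\rangle_-\geq 1$ and $v^--v^+$ past-pointing) is correct and is an honest unpacking of ``by convexity.''

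You flag as the remaining obstacle the case where $q$ is a vertex of $P_n$, and this is a legitimate observation: the cone angles of $P_n$ are strictly greater than $2\pi$, and on such a cone a shortest path \emph{can} pass through the apex whenever both intrinsic angles it makes there are $\geq\pi$; so the paper's assertion that singular points of $f_n\circ c$ occur only at edge crossings is itself an unjustified simplification, and the paper does not address vertices at all. Thus the gap you identify is not a defect of your reconstruction relative to the paper --- it is inherited from the source. The only caveat worth recording is that your support-plane inequality $\langle v^--v^+,\xi^-\rangle_-\geq 0$ alone does not force $v^--v^+$ causal (it could be spacelike and still satisfy this for every normal $\xi$ in the normal cone), so the geodesic condition really is needed, as you note; at an edge you obtain it by direct computation, and at a vertex one would have to convert the intrinsic ``angle $\geq\pi$ on both sides'' into an ambient statement about the tangent cone, which neither you nor the paper carry out.
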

\begin{proof}
\begin{enumerate}
\item The first assertion follows because the singular point of
$f_n\circ c$ are when $c$ crosses an edge, and the result follows by convexity of $P_n$ (note that clearly the intersection of a shortest path with
an edge reduces to a single point).
\item If the point $c(0)$ is not singular, then locally
$c$ is a spacelike segment that can be written as  $c(0)+tv$, $t\in (-\epsilon, \epsilon)$, and $v$ a unit spacelike segment.
So $$f_n(c(t))=-t^2-2\langle c(0),v\rangle_{-}t-\langle c(0),c(0)\rangle_{-}~.$$
\item
If $f_n$ has a local maximum at $t_{M}$, then there is a small neighbourhood $(a,b)$ of $t_{M}$ such that the function $f_n$ is regular on $(a,t_{M})\cup(t_{M},b)$, $f_n(c(t))'\geq0$ for all $t\in(a,t_{M})$, and $f_n(c(t))'\leq0$ for all $t\in(t_{M},b)$. Thus, once we assume that $t_{M}$ is a singular point of the application $f_n$, then, by the first fact, $(f_n\circ c)'$ has a positive jump at $t_{M}$, and so there are points on $(t_{M},b)$ where $(f_n\circ c)'$ is strictly greater than $0$ which leads to a contradiction.
\end{enumerate}
\end{proof}

Let $y_M(n)$ be a point on $P_n$  where $f_n$ attains its maximum. Let $x_M(n)$ be the image of $y_M(n)$ by the radial projection from $I^+(0)$ onto $\H^2$.
Among all the maxima of $f_n$, $y_M(n)$ is chosen
as follows.
As $(d_n)$ uniformly converge to $d$, by  Lemma~\ref{sup metrique}, there is a metric
$d_{\sup}$ majorizing the $d_n$. We choose
 $y_M(n)$  such that all the $\tilde\psi_{n}(x_M(n))$ belong to the same Dirichlet fundamental region for $\tilde d_{\sup}$.

Let $\gamma\in \pi_1S$. Let $c:[0,l_\gamma(n)]\to P_n$ be an arc-length parametrized shortest path
of $P_n$ between  $y_M(n)$ and $\rho_n(\gamma)y_M(n)$.

\begin{lemma}\label{lemma:estimate-fP}

With the notation above,
\begin{equation}\label{eq:estimate-fP}
\int_{0}^{l_\gamma(n)} \big{|}\langle c,c'\rangle_{-}\big{|} \leq \frac{l_\gamma(n)^2}{2}~.
\end{equation}
\end{lemma}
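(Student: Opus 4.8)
The plan is to translate the bound into a one-dimensional estimate for $g := f_n\circ c$ on $[0,L]$, where $L:=l_\gamma(n)$. Wherever $c'$ exists one has $g'(t)=-2\langle c(t),c'(t)\rangle_-$, so $\int_0^L|\langle c,c'\rangle_-|=\tfrac12\int_0^L|g'|$ and it is enough to prove the pointwise bound $|g'(t)|\le 2t$ on $[0,L]$. Since $f_n$ is $\rho_n(\pi_1S)$-invariant, both endpoints $c(0)=y_M(n)$ and $c(L)=\rho_n(\gamma)y_M(n)$ realize $M:=\max_{P_n}f_n$; hence $g\le M$ on $[0,L]$ and $g(0)=g(L)=M$. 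Moreover, at a maximum of $f_n$ the support plane of $P_n$ is Minkowski-orthogonal to the position vector (on a polyhedral surface such a point is the apex of the downward paraboloid that $f_n$ cuts out on the face containing it), so $c'(0^+)$ and $c'(L^-)$ are orthogonal to $c(0)$ and $c(L)$ respectively, giving $g'(0^+)=g'(L^-)=0$.

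Next I would read off the shape of $g$ from Fact~\ref{lemma:positive-jump}: by part (2), $g''=-2$ on each of the finitely many regular subintervals, and by part (1), $g'$ jumps upward at each singular point. Together with $g'(0^+)=0$ this says exactly that $g'(t)=-2t+\Sigma(t)$ for a nonnegative nondecreasing step function $\Sigma$ with $\Sigma(0)=0$. Integrating, $g(t)=M-t^2+\int_0^t\Sigma$, so the constraint $g\le M$ becomes $\int_0^t\Sigma(s)\,ds\le t^2$ for all $t\in[0,L]$, while $g'(L^-)=0$ gives $\Sigma(L^-)=2L$.

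The pointwise bound then follows from a short estimate on $\Sigma$. For $t\le L/2$, monotonicity of $\Sigma$ yields $t\,\Sigma(t)\le\int_t^{2t}\Sigma\le\int_0^{2t}\Sigma\le(2t)^2$, hence $\Sigma(t)\le 4t$; for $t\ge L/2$ we have $\Sigma(t)\le\Sigma(L^-)=2L\le 4t$. Thus $0\le\Sigma(t)\le 4t$ throughout, whence $|g'(t)|=|\Sigma(t)-2t|\le 2t$, and $\int_0^L|g'|\le\int_0^L 2t\,dt=L^2$, i.e. $\int_0^L|\langle c,c'\rangle_-|\le L^2/2$. (Running the same argument from the right endpoint gives in addition $|g'(t)|\le 2(L-t)$; this sharper bound is consistent with the statement but is not needed.)

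The step that really uses the geometry is the reduction in the first paragraph: one must use that $c(0)$ and $c(L)$ maximize $f_n$ on all of $P_n$, not merely along $c$, since this is what forces both the endpoint values $g(0)=g(L)=M$ and — crucially — the vanishing of the endpoint derivatives. The weaker fact that $g$ is maximal at the ends of $[0,L]$ only gives $g'(0^+)\le 0$ and $g'(L^-)\ge 0$, which is genuinely insufficient (one can then build profiles $g$ satisfying Fact~\ref{lemma:positive-jump} with $\int_0^L|g'|>L^2$). Everything after that reduction is elementary calculus.
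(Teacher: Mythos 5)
Your proof is correct, but it takes a genuinely different route from the paper's. The paper proves the bound by decomposing $[0,L]$ at the local extrema of $g=f_n\circ c$ into monotone subintervals, uses the Taylor bound $g(t)\ge g(T)-(t-T)^2$ at each interior local maximum $T$ (relying on Fact~\ref{lemma:positive-jump}(3) to get $g'(T)=0$ there) to bound each monotone piece by half its length squared, and then sums, invoking $\sum a_i^2\le(\sum a_i)^2$. You instead make a single global decomposition $g'(t)=-2t+\Sigma(t)$ with $\Sigma$ a nonnegative nondecreasing step function, convert the constraint $g\le M$ into $\int_0^t\Sigma\le t^2$, and extract the pointwise bound $\Sigma(t)\le 4t$, hence $|g'(t)|\le 2t$; this yields the result by a single integration and, as you note, even improves to $|g'(t)|\le 2\min(t,L-t)$, i.e.\ $L^2/4$. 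The inputs from the geometry (concavity $g''=-2$ on faces, positive jumps of $g'$ across edges, and the vanishing one-sided derivatives at the endpoints) are identical, so the difference is purely in the elementary-calculus part, where your route is arguably more transparent and gives a sharper constant. One genuine service of your write-up is making the endpoint condition $g'(0^+)=g'(L^-)=0$ explicit and showing by example that it is essential: the paper invokes Fact~\ref{lemma:positive-jump}(3) for $T_0=0$ and $T_N=L$, but the proof of that Fact is phrased for interior points of the parameter interval, so the endpoint case really does require the additional input that $y_M(n)$ is a \emph{global} maximum of $f_n$ on $P_n$. Your geometric justification of that step is a bit compressed, though: "the apex of the downward paraboloid on the face containing it" presupposes that $y_M(n)$ lies in the interior of a face, which is not immediate on a polyhedral surface. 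To close it cleanly one can argue that $P_n$ lies in the past of the hyperboloid $\sqrt{M}\,\H^2$ while touching it at $y_M(n)$, hence the future convex body bounded by $P_n$ contains the future of $\sqrt{M}\,\H^2$; nested convex bodies touching at a point where the inner one is smooth share a unique support plane there, so $P_n$ has a unique support plane at $y_M(n)$ (the tangent plane of $\sqrt{M}\,\H^2$, which is Minkowski-orthogonal to $y_M(n)$), and a point of a convex polyhedral surface with a unique support plane is interior to a face. With that detail supplied, your argument is complete.
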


\begin{proof}
To simplify the notation, let us denote $f_n\circ c$ by $f$ in this proof.
Let $T$ be a point where $f$ attains a local maximum. By Fact~\ref{lemma:positive-jump}, $(f(T))'=0$, hence
the Taylor polynomial of the function $f$ at $T$ is
\begin{equation*}\label{eq:taylor-t-mu}
P^{f}_{T}(t)=f(T)-(t-T)^2~.
\end{equation*}

As the function $f$ can have positive jumps, $f(t)\geq P^{f}_{T}(t)$ for all $t$. Let the points $t_{-}$ and $t_{+}$ be the closest points to $T$, $t^{-}<T<t_{+}$, where $f$ attains local minima, in particular $f(t_{-})<f(T)$ and $f(t_{+})<f(T)$. So we have that
\begin{equation*}\label{eq:taylor-t+--1}
f(t_{-})\geq P^{f}_{T}(t_{-})=f(T)-(t_{-}-T)^2\quad\mbox{and}\quad f(t_{+})\geq P^{f}_{T}(t_{+})=f(T)-(t_{+}-T)^2~,
\end{equation*}
or, in other words,
\begin{equation}\label{eq:taylor-t+--2}
0<f(T)-f(t_{-})\leq (T-t_{-})^2\quad\mbox{and}\quad 0<f(T)-f(t_{+})\leq (t_{+}-T)^2~.
\end{equation}

By definition, $f$ has global (and also local) maxima at $t=0$ and $t=l_{\gamma}(n)$. Let us now introduce the decomposition $0=T_{0}<t_{1}<T_{1}<\ldots<t_{N}<T_{N}=l_{\gamma}(n)$ of the segment $[0,l_{\gamma}(n)]$, where $f$ attains local minima at $t_{i}$, $i=1,\ldots,N$, and local maxima at $T_{i}$, $i=0,\ldots,N$. Hence, at the points of differentiability of $f$, we have that $f'(t)\geq0$ for all intervals $[t_{i},T_{i}]$, and that $f'(t)\leq0$ for all intervals $[T_{i},t_{i+1}]$. Note also that $f'(t)=-2\langle c(t),c'(t)\rangle_{-}$ at all points where $f$ is differentiable. Therefore, by (\ref{eq:taylor-t+--2}), for all intervals $[t_{i},T_{i}]$ and $[T_{i},t_{i+1}]$ we get
\begin{equation}\label{eq:int-min-max}
\int_{t_{i}}^{T_{i}} \big{|}\langle c(t),c'(t)\rangle_{-}\big{|} dt=\int_{t_{i}}^{T_{i}}\frac{f'(t)}{2}dt=\frac{f(T_{i})-f(t_{i})}{2}\leq\frac{(T_{i}-t_{i})^2}{2}
\end{equation}
\quad\mbox{and}\quad
\begin{equation}\label{eq:int-max-min}
\int_{T_{i}}^{t_{i+1}} \big{|}\langle c(t),c'(t)\rangle_{-}\big{|} dt=\int_{T_{i}}^{t_{i+1}}\frac{-f'(t)}{2}dt=\frac{f(T_{i})-f(t_{i+1})}{2}\leq\frac{(t_{i+1}-T_{i})^2}{2}~.
\end{equation}

Hence
\begin{equation*}\label{eq:f-integral-estimation1}
\int_{0}^{l_{\gamma}(n)} \big{|}\langle c,c'\rangle_{-}\big{|} =
\sum_{i=1}^{N}\Bigg{[} \int_{T_{i-1}}^{t_{i}} \big{|}\langle c,c'\rangle_{-}\big{|}  + \int_{t_{i}}^{T_{i}} \big{|}\langle c,c'\rangle_{-}\big{|} \Bigg{]}\leq
\frac{1}{2}\sum_{i=1}^{N}\Big{[} (t_{i}-T_{i-1})^2 + (T_{i}-t_{i})^2 \Big{]}
\end{equation*}
and as for non-negative real numbers, $\sum_{j=1}^{m}(a_{j})^2\leq(\sum_{j=1}^{m}a_{j})^2$, we arrive at
\begin{equation*}\label{eq:f-integral-estimation4}
\int_{0}^{l_{\gamma}(n)} \big{|}\langle c,c'\rangle_{-}\big{|}\leq\frac{1}{2}\Bigg{[}\sum_{i=1}^{N}\Big{[} (t_{i}-T_{i-1}) + (T_{i}-t_{i}) \Big{]} \Bigg{]}^2=\frac{[T_{N}-T_{0}]^2}{2}=\frac{l_{\gamma}(n)^2}{2}~.
\end{equation*}
\end{proof}

\begin{corollary}\label{lemma:translation-length-estimate}
We have, for any $n$,
\begin{equation*}
d_{\H^2}(x_M(n),\rho_n(\gamma)x_M(n)) \leq \frac{B_\gamma}{\alpha}+\frac{B_\gamma^2}{2\alpha^2}~,
\end{equation*}
where $\alpha$ is the uniform lower bound of the functions
 $u_n$ given by
Proposition~\ref{ineqality:u_n>alpha}, and $B_\gamma$ is a constant depending only on $\gamma$.
\end{corollary}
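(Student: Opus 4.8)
The plan is to combine the two estimates just established. Since each $(u_n,\rho_n)$ is a polyhedral Fuchsian H-convex function, Lemma~\ref{lemma321} tells us that $d_{u_n}$ is a complete distance with Lipschitz shortest paths, so Lemma~\ref{eq:principale} applies with $u=u_n$, $x=x_M(n)$ and $y=\rho_n(\gamma)x_M(n)$. As $u_n$ is $\rho_n(\pi_1S)$-invariant and $\rho_n(\gamma)$ acts linearly on $\R^{2,1}$, we have $u_n(x_M(n))x_M(n)=y_M(n)$ and $u_n(\rho_n(\gamma)x_M(n))\rho_n(\gamma)x_M(n)=\rho_n(\gamma)y_M(n)$, so the arc-length shortest path $\nu$ appearing in Lemma~\ref{eq:principale} may be taken to be the curve $c$ of Lemma~\ref{lemma:estimate-fP}, and $d_{u_n}(x_M(n),\rho_n(\gamma)x_M(n))=l_\gamma(n)$. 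Combining this with the uniform lower bound $u_n>\alpha$ from Proposition~\ref{lem:u_n>alpha} and with the estimate of Lemma~\ref{lemma:estimate-fP}, Lemma~\ref{eq:principale} gives
\begin{equation*}
d_{\H^2}(x_M(n),\rho_n(\gamma)x_M(n))\leq\frac{1}{\alpha}\,l_\gamma(n)+\frac{1}{\alpha^2}\int_0^{l_\gamma(n)}\big|\langle c,c'\rangle_-\big|\leq\frac{l_\gamma(n)}{\alpha}+\frac{l_\gamma(n)^2}{2\alpha^2}~.
\end{equation*}

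It then remains to bound $l_\gamma(n)$ from above by a constant $B_\gamma$ independent of $n$. This is exactly where the special choice of $y_M(n)$ intervenes: by construction all the points $\tilde\psi_n(x_M(n))$ lie in one fixed Dirichlet fundamental domain $F$ for $\tilde d_{\sup}$, whose closure is compact. Using the equivariance relation~\eqref{eq:equivariance2},
\begin{equation*}
l_\gamma(n)=d_{u_n}(x_M(n),\rho_n(\gamma)x_M(n))=\tilde d_n\big(\tilde\psi_n(x_M(n)),\gamma.\tilde\psi_n(x_M(n))\big)~,
\end{equation*}
and since $\tilde d_n\leq\tilde d_{\sup}$ (Lemma~\ref{sup metrique}) while $\pi_1S$ acts by isometries on $\tilde d_{\sup}$, fixing a basepoint $x_o'\in F$ and applying the triangle inequality twice yields
\begin{equation*}
l_\gamma(n)\leq 2\operatorname{diam}_{\tilde d_{\sup}}(\overline{F})+\tilde d_{\sup}(x_o',\gamma.x_o')=:B_\gamma~,
\end{equation*}
a quantity depending only on $\gamma$ (and on the fixed data $\tilde d_{\sup}$, $F$, $x_o'$). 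Since $t\mapsto t/\alpha+t^2/(2\alpha^2)$ is increasing on $[0,\infty)$, substituting $l_\gamma(n)\leq B_\gamma$ into the first display proves the corollary.

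The two applications of the already-proved lemmas are routine; the only genuine point is the uniform bound on $l_\gamma(n)$, which forces us to use that the basepoints $\tilde\psi_n(x_M(n))$ were deliberately placed in a common fundamental domain and that $\tilde d_{\sup}$ majorizes all the $\tilde d_n$. I expect this --- together with the small verification that the path $\nu$ of Lemma~\ref{eq:principale} can be identified with the curve $c$ of Lemma~\ref{lemma:estimate-fP} --- to be the only (mild) obstacle.
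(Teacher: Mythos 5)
Your proof is correct and follows essentially the same route as the paper: apply Lemma~\ref{eq:principale} with Lemma~\ref{lemma:estimate-fP} to bound $d_{\H^2}(x_M(n),\rho_n(\gamma)x_M(n))$ by $l_\gamma(n)/\alpha + l_\gamma(n)^2/(2\alpha^2)$, then use the equivariance relation~\eqref{eq:equivariance2} and the fact that the $\tilde\psi_n(x_M(n))$ lie in a fixed compact set to bound $l_\gamma(n)$ by a constant $B_\gamma$ via $\tilde d_{\sup}$. Your explicit triangle-inequality bound for $B_\gamma$ and the monotonicity remark are small elaborations of details the paper leaves implicit, but they do not change the argument.
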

\begin{proof}
By Lemma~\ref{eq:principale},
$$
d_{\H^2}(x_M(n),\rho_n(\gamma)x_M(n)) \leq \frac{l_\gamma(n)}{\alpha}+\frac{1}{\alpha^2}\int_{0}^{l_\gamma(n)} \big{|}\langle c,c'\rangle_{-}\big{|} ~.$$

With \eqref{eq:estimate-fP}, we obtain
$$
d_{\H^2}(x_M(n),\rho_n(\gamma)x_M(n)) \leq \frac{l_\gamma(n)}{\alpha}+\frac{l_\gamma(n)^2}{2\alpha^2}~.$$

Now, $l_\gamma(n)=d_{u_n}(y_M(n),\rho_n(\gamma)y_M(n))$
so from \eqref{eq:equivariance2},
 $$l_\gamma(n)=\tilde{d}_n(\tilde\psi(x_M(n)),\gamma.\tilde\psi(x_M(n)))\leq \tilde{d}_{\sup} (\tilde\psi(x_M(n)),\gamma.\tilde\psi(x_M(n)))~.$$
  As by definition, $\tilde\psi(x_M(n))$ are all belonging to a same compact set of $\tilde{S}$, the existence of $B_\gamma$ follows

\end{proof}

For $\gamma\in \pi_1S$, let us denote by $L_{\rho_n}(\gamma)$ the length of the geodesic representative of $\gamma$ in $\H^2/\rho_n(\pi_1S)$. In particular,
$L_{\rho_n}(\gamma)=\min_{x\in \H^2}d_{\H^2}(x,\rho_n(\gamma)x)$.

%Lemma~\ref{lem:upper hyp dist}

\begin{corollary} \label{lemma-bounded-closed curves}
Let $\gamma\in\pi_1S$.
Then, there exists $B_\gamma>0$ such that for
% any $x\in \H^2$ and
any $n$,
\begin{equation}\label{eq:translation-length-estimate}
L_{\rho_n}(\gamma) \leq \frac{B_\gamma}{\alpha}+\frac{B_\gamma^2}{2\alpha^2}~.
\end{equation}
\end{corollary}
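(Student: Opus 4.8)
The plan is to observe that Corollary~\ref{lemma-bounded-closed curves} is an immediate consequence of Corollary~\ref{lemma:translation-length-estimate} together with the definition of $L_{\rho_n}(\gamma)$. Recall that for $\gamma\in\pi_1S$ one has
$$L_{\rho_n}(\gamma)=\min_{x\in\H^2}d_{\H^2}(x,\rho_n(\gamma)x)~,$$
the minimum being attained because $\rho_n(\pi_1S)$ acts cocompactly on $\H^2$. In particular, evaluating the displacement function at the specific point $x_M(n)$ chosen above (the radial projection to $\H^2$ of a point of $P_n$ realising the maximum of $f_n$) gives
$$L_{\rho_n}(\gamma)\leq d_{\H^2}(x_M(n),\rho_n(\gamma)x_M(n))~.$$

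First I would invoke Corollary~\ref{lemma:translation-length-estimate}, which bounds the right-hand side above by $\frac{B_\gamma}{\alpha}+\frac{B_\gamma^2}{2\alpha^2}$, where $\alpha>0$ is the uniform lower bound for the $u_n$ supplied by Proposition~\ref{lem:u_n>alpha} and $B_\gamma>0$ is the constant (depending only on $\gamma$, through $\tilde d_{\sup}$ and the fixed compact set of $\tilde S$ containing all the $\tilde\psi_n(x_M(n))$) produced in the proof of that corollary. Combining the two displayed inequalities yields exactly \eqref{eq:translation-length-estimate}.

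The only point requiring a word of care is uniformity in $n$: one must check that neither $\alpha$ nor $B_\gamma$ depends on $n$. This is already guaranteed, since $\alpha$ is uniform by Proposition~\ref{lem:u_n>alpha} (after passing to the subsequence fixed there) and $B_\gamma$ arises as a bound on $\tilde d_{\sup}(\tilde\psi_n(x_M(n)),\gamma.\tilde\psi_n(x_M(n)))$ over the single compact set in which all the basepoints $\tilde\psi_n(x_M(n))$ were placed by construction. Hence there is no real obstacle here; all the substantive work has already been carried out in Lemma~\ref{lemma:estimate-fP} and Corollary~\ref{lemma:translation-length-estimate}, and the present statement is just the translation of that estimate into a statement about translation lengths of the representations $\rho_n$.
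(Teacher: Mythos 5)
Your proposal is correct and matches the paper's (implicit) argument: the corollary is an immediate consequence of Corollary~\ref{lemma:translation-length-estimate} via the definition $L_{\rho_n}(\gamma)=\min_{x\in\H^2}d_{\H^2}(x,\rho_n(\gamma)x)$, evaluated at $x_M(n)$. The uniformity discussion is consistent with how the paper sets up $\alpha$ and $B_\gamma$.
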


We have proved that for every $\gamma\in\pi_1S$,
$L_{\rho_n}(\gamma)$ is bounded from above by a constant which does not depend on $n$.
 We can now use the following result.

\begin{proposition}[Proposition~7.11 in \cite{FLP2012}]\label{lemma-teichmuller-curves}
There exist  $\gamma(i)\in \pi_1S$, $i=1,\cdots, 9g-9$, such
that the map
from Teichm\"uller space of $S$ to $\R^{9g-9}$ that gives
the length of the geodesic representatives of  $\gamma(i)$
is injective and proper.
\end{proposition}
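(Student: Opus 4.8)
The plan is to reduce the statement to a computation in Fenchel--Nielsen coordinates. First I would fix a pants decomposition $\mathcal{P}=\{\alpha_1,\dots,\alpha_{3g-3}\}$ of $S$ and recall that the associated Fenchel--Nielsen map, sending a hyperbolic metric to the vector of its cuff lengths $\ell_{\alpha_j}$ and twist parameters $\theta_j$, is a real-analytic homeomorphism from $\mathcal{T}(S)$ onto $(\R_{>0})^{3g-3}\times\R^{3g-3}$. The $3g-3$ curves $\alpha_j$ are taken among the $\gamma(i)$, so the cuff-length coordinates are literally read off from the length vector; it then remains to recover the $3g-3$ twists. For each $j$, I would choose two essential simple closed curves $\beta_j,\beta_j'$ lying in the union of the (one or two) pairs of pants adjacent to $\alpha_j$, each crossing $\alpha_j$ but disjoint from every other cuff $\alpha_k$. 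Once all cuff lengths are known, $\ell_{\beta_j}$ and $\ell_{\beta_j'}$ then depend only on the single twist $\theta_j$. This gives $2(3g-3)$ further curves, for a total of $9g-9$.

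For injectivity, the key input is the classical fact --- provable by hyperbolic trigonometry inside the four-holed sphere (or one-holed torus) carrying $\beta_j$, or by the convexity of geodesic-length functions along twist deformations --- that $\theta_j\mapsto\ell_{\beta_j}$ is a strictly convex real-analytic function tending to $+\infty$ at $\pm\infty$, and likewise for $\beta_j'$. A single such function is two-to-one off its minimum, so one must use two of them: choosing $\beta_j'$ to wind differently around $\alpha_j$ than $\beta_j$ makes the two convex functions have distinct minima and no common level structure, so the pair $(\ell_{\beta_j},\ell_{\beta_j'})$ is injective in $\theta_j$. Tracing this back through the Fenchel--Nielsen homeomorphism shows that the whole length vector determines the point of $\mathcal{T}(S)$.

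For properness, suppose $X_n$ leaves every compact subset of $\mathcal{T}(S)$. After passing to a subsequence, exactly one of the following happens: (i) some $\ell_{\alpha_j}(X_n)\to\infty$, in which case that coordinate of the length vector already blows up; (ii) some $\ell_{\alpha_j}(X_n)\to 0$, in which case the collar lemma forces $\ell_{\beta_j}(X_n)\to\infty$, since $\beta_j$ crosses the thin collar of $\alpha_j$; (iii) all cuff lengths stay in a fixed compact subset of $\R_{>0}$ while $|\theta_j(X_n)|\to\infty$ for some $j$, in which case, the remaining Fenchel--Nielsen data being confined to a compact set, the divergence $\ell_{\beta_j}\to\infty$ as $|\theta_j|\to\infty$ is uniform and again $\ell_{\beta_j}(X_n)\to\infty$. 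In every case the length vector leaves every compact subset of $\R^{9g-9}$, which is properness.

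The step I expect to be the main obstacle is the choice of the transverse curves and the verification that the pair $(\ell_{\beta_j},\ell_{\beta_j'})$ genuinely separates twist values: one must simultaneously keep each pair ``localized'' near its cuff --- so that it detects only the twist $\theta_j$ --- and make the two curves wind differently enough around $\alpha_j$ that the two convex length functions are not related by a symmetry. Producing the right combinatorial model of these curves, together with the accompanying quantitative estimates on how $\ell_{\beta_j}$ grows under twisting (uniformly when the other parameters stay bounded), is the technical heart of the argument, carried out in \cite{FLP2012}.
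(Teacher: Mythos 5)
The paper does not actually prove this statement; it is imported verbatim as Proposition~7.11 of Fathi--Laudenbach--Po\'enaru, so there is no in-paper proof to compare against. Your reconstruction does follow the standard FLP line (Fenchel--Nielsen coordinates, one pants decomposition giving $3g-3$ cuffs plus $2(3g-3)$ transverse curves localized at each cuff, convexity of length along twist flows, collar lemma for properness), and the properness trichotomy you give is correct.

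However, the injectivity step as you have written it has a genuine gap. From the bare facts that $\theta\mapsto\ell_{\beta_j}(\theta)$ and $\theta\mapsto\ell_{\beta_j'}(\theta)$ are strictly convex, proper, and have distinct minimizers, it does \emph{not} follow that the pair $(\ell_{\beta_j},\ell_{\beta_j'})$ is injective in $\theta$. Strict convexity says only that any value other than the minimum is attained at exactly two points, one on each side of the minimizer; if $\theta_1<\theta_2$ satisfy $\ell_{\beta_j}(\theta_1)=\ell_{\beta_j}(\theta_2)$ and $\ell_{\beta_j'}(\theta_1)=\ell_{\beta_j'}(\theta_2)$, all you can conclude is that both minimizers lie in the open interval $(\theta_1,\theta_2)$, which is perfectly consistent with them being distinct. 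Two generic strictly convex proper functions with different minimizers can easily fail to separate points. What actually closes the argument in FLP (and in the other standard treatments) is the \emph{explicit trigonometric form} of the length function along a twist flow, which forces $\ell_{\beta_j}$ to be symmetric about its minimizer (it has the shape of an affine function of $\cosh$ of a shifted twist variable); with that symmetry, level equality of $\ell_{\beta_j}$ pins down the midpoint $\tfrac{\theta_1+\theta_2}{2}$ as the minimizer of $\ell_{\beta_j}$, the analogous statement for $\ell_{\beta_j'}$ pins down the same midpoint as the minimizer of $\ell_{\beta_j'}$, and the two minimizers being distinct gives the contradiction. You flag this step as the one you are unsure about, and indeed it is where the argument as stated does not yet go through; convexity alone is not enough, one needs the symmetry (or, equivalently, the explicit hyperbolic-trigonometric length formulas).
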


%So we obtain
Corollary~\ref{lemma-bounded-closed curves} and Proposition~\ref{lemma-teichmuller-curves} imply that the sequence of elements of Teichm\"uller space
defined by $(\rho_n(\pi_1S))_n$ is lying in a compact set, so  up to extract a subsequence,
the sequence $(\rho_n(\pi_1S))_n$ converges in Teichm\"uller space.
By the normalization we defined in Definition~\ref{def norm} for the groups $\rho_n(\pi_1S)$, Proposition~\ref{cor gp cv} follows.

\subsection{Upper bound}

\begin{proposition}\label{lem upper bound}
Under the hypothesis of Proposition~\ref{prop finale},
 there exists  $\beta>0$ such that for any $n$ and for any $x\in \H^2$,
\begin{equation*}
u_{n}(x)<\beta~.
\end{equation*}
\end{proposition}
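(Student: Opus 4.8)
The plan is to prove that $m_n:=\max_{\H^2}u_n$ is bounded independently of $n$; a large $m_n$ will be shown to force the induced distance $\bar d_{u_n}$ on $\H^2/\rho_n(\pi_1S)$ to have large diameter, which contradicts its uniform convergence to $(S,d)$.

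The maximum $m_n$ is attained, say at $x_M(n)$, because $u_n$ is $\rho_n(\pi_1S)$-invariant and the quotient is compact; trivially $u_n\le m_n$. For a matching lower bound I would exploit that $G_{u_n}$ is \emph{spacelike}: along a unit-speed geodesic $c$ of $\H^2$, the curve $\gamma(t)=u_n(c(t))c(t)$ lies on $G_{u_n}$, hence $\langle\gamma',\gamma'\rangle_-\ge 0$ wherever defined, and by the computation leading to \eqref{eq:length} this reads $((u_n\circ c)')^2\le u_n(c)^2$, i.e. $t\mapsto\ln u_n(c(t))$ is $1$-Lipschitz. Therefore $u_n(x)\ge m_n\,e^{-d_{\H^2}(x,x_M(n))}$ for all $x\in\H^2$, and by $\rho_n(\pi_1S)$-invariance $u_n(x)\ge m_n\,e^{-R_n}$, where $R_n$ is the hyperbolic diameter of $\H^2/\rho_n(\pi_1S)$. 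By Proposition~\ref{cor gp cv} the sequence $(\rho_n)$ converges in Teichm\"uller space, so $R_0:=\sup_n R_n<\infty$ and we obtain the two-sided bound
\[
m_n\,e^{-R_0}\ \le\ u_n\ \le\ m_n\qquad\text{on }\H^2 ,
\]
whose crucial feature is that the ratio of the two hyperboloids trapping $G_{u_n}$ does not depend on $n$.

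Next I would plug this into Corollary~\ref{lem:upper hyp dist}, whose hypotheses hold here because $(u_n,\rho_n)$ is polyhedral (Lemma~\ref{lemma321}): with $\alpha=m_n e^{-R_0}$ and $\beta=m_n$ it gives
\[
d_{u_n}\ \ge\ c_0\, m_n\, d_{\H^2}\qquad\text{on }\H^2 ,
\]
for a constant $c_0=c_0(R_0)>0$. Passing to the quotient, $\operatorname{diam}(\H^2/\rho_n(\pi_1S),\bar d_{u_n})\ \ge\ c_0\, m_n\,\operatorname{diam}(\H^2/\rho_n(\pi_1S),g_{\H^2})\ \ge\ c_0\, m_n\, d_*$, where $d_*>0$ is any lower bound for the hyperbolic diameter of a closed surface of genus $g$ (such a surface has area $4\pi(g-1)$). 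On the other hand, the uniform convergence of $(\H^2/\rho_n(\pi_1S),\bar d_{u_n})$ to $(S,d)$ provides a uniform upper bound $\operatorname{diam}(\bar d_{u_n})\le D<\infty$. Combining the last two displays, $m_n\le D/(c_0 d_*)$ for every $n$, and one takes $\beta$ slightly larger than this to get a strict inequality.

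The main obstacle is the \emph{scaling} in the previous step. The trivial bound $u_n\le m_n$ together with the fixed lower bound from Proposition~\ref{lem:u_n>alpha} would only yield, via Corollary~\ref{lem:upper hyp dist}, an estimate of the form $d_{u_n}\gtrsim (\mathrm{const}/m_n)\,d_{\H^2}$, which degenerates as $m_n\to\infty$. What makes the argument work is a lower bound on $u_n$ that itself grows like $m_n$; the spacelike condition delivers precisely this, but only once the hyperbolic diameters $R_n$ of the quotients are known to stay bounded — which is exactly the role of Proposition~\ref{cor gp cv}.
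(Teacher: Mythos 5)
Your proof is correct, and it takes a genuinely different route from the paper's. The paper's argument runs in two stages: first it rules out $\min u_n\to\infty$ by feeding a large lower bound $\alpha$ into Corollary~\ref{lemma-bounded-closed curves}, which would force every translation length $L_{\rho_n}(\gamma)$ to collapse, contradicting the positivity of the systole of a closed hyperbolic surface; second, with $\min u_n$ and $\alpha$ from Proposition~\ref{lem:u_n>alpha} in hand, it applies the spacelike condition \emph{pointwise} via Lemma~\ref{lem:edge spacelike}, expanding $\langle u_n(x_n)x_n-u_n(y)y,\cdot\rangle_->0$ into a quadratic inequality in $u_n(y)$ that can hold only if $u_n(y)$ is bounded. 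You instead integrate the spacelike condition along $\H^2$-geodesics: $\langle v',v'\rangle_-\ge 0$ plus \eqref{eq:length} gives $|(u_n\circ c)'|\le u_n\circ c$ a.e.\ (the secants are spacelike by Lemma~\ref{lem:edge spacelike}, so the a.e.\ derivative is non-timelike, and $u_n\circ c$ is absolutely continuous since $u_n$ is locally Lipschitz by Lemma~\ref{lem:equilip}), yielding the log-Lipschitz bound $u_n\ge m_n e^{-R_n}$ with $R_n$ the hyperbolic diameter of $\H^2/\rho_n(\pi_1S)$. The boundedness of $R_n$ comes from Proposition~\ref{cor gp cv} just as the paper's compact set $C$ does, so both proofs lean on group convergence at the same spot. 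The payoff of your route is a two-sided pinching $m_ne^{-R_0}\le u_n\le m_n$ whose \emph{ratio} is $n$-independent, which turns the bi-Lipschitz estimate of Corollary~\ref{lem:upper hyp dist} into a lower bound $d_{u_n}\ge c_0(R_0)\,m_n\,d_{\H^2}$ that scales linearly in $m_n$; the contradiction then comes from comparing diameters, the hyperbolic one being bounded below by a genus-only constant (Gauss--Bonnet) and the $\bar d_{u_n}$-one being bounded above by uniform convergence to $(S,d)$. Your argument is arguably more conceptual in that it isolates the correct scaling and avoids the slightly delicate use of Corollary~\ref{lemma-bounded-closed curves} with a varying $\alpha$, at the cost of invoking the quantitative form of Corollary~\ref{lem:upper hyp dist} and a diameter lower bound for closed hyperbolic surfaces, neither of which the paper's proof needs.
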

\begin{proof}

As the sequence of representations converges (Proposition~\ref{cor gp cv}), there exists a compact set
$C\subset \H^2$ which contains a fundamental domain for $\rho_n(\pi_1S)$ for all $n$.
For each $n$, let $x_n$ be a point of $C$ which realizes the minimum
of $u_n$.
Suppose that $(u_n(x_n))_n$ is not bounded from above.
In particular, up to extract a subsequence, $u_n$ are uniformly bounded from below by a positive constant $\alpha$, and moreover
one can take
$\alpha$ arbitrarily large in Corollary~\ref{lemma-bounded-closed curves}.
But then, for any $\gamma\in \pi_1S$, $L_{\rho_n}(\gamma)$ is arbitrarily small, that is impossible on a compact hyperbolic surface. For example, one can use the fact that  only finitely many closed geodesics have length less than a given constant
\cite[Theorem~1.6.11]{Bus}.
Hence  there exists $b$ such that for any $n$, $u_n(x_n)<b$.

From Proposition~\ref{lem:u_n>alpha}, $\alpha \leq u_n(x_n)$ independently of $n$. Let $y\in C$. As $C$ is compact, there exists $M$ such that, for any $n$,
$$-\langle x_n,y\rangle_- = \cosh d_{\H^2}(x_n,y) \leq M~. $$
As the surfaces defined by $u_n$ are spacelike, by  Lemma~\ref{lem:edge spacelike} below,
$$0<\langle u_n(x_n)x_n-u_n(y)y,u_n(x_n)x_n-u_n(y)y\rangle_-~. $$

Developing the right-hand side and using the bounds introduced above, we arrive at the condition
$$0<-u_n(y)^2 + Mb u_n(y) -\alpha^2~, $$
where the constants are independent of the choice of $n$ and $y$ in $C$. So the $u_n$ are uniformly bounded from above.

\end{proof}

\begin{lemma}\label{lem:edge spacelike}
For any $x,y$ on a spacelike convex surface, $x\not= y$, the segment between $x$ and $y$ is spacelike.
\end{lemma}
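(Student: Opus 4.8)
The statement concerns only the position of a chord of $\partial K$ relative to the light cone, with no reference to the intrinsic distance, so the natural tool is the family of support planes of $K$, which by hypothesis are all spacelike. Write $v=y-x\neq 0$. The plan is to obtain one inequality on $v$ from a support plane at $x$ and a second, opposite, inequality from a support plane at $y$, and then to observe that a causal (timelike or lightlike) vector cannot satisfy both.

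First I would recall that since $K$ is a closed convex body there is a support plane of $K$ through every boundary point, and that by the standing assumption that $K$ is a \emph{spacelike} convex set every such plane is spacelike, i.e.\ has a timelike normal. Fix a support plane $P$ of $K$ at $x$, and let $n$ be its future-pointing unit timelike normal (so $\langle n,n\rangle_-=-1$ and $n_3>0$); checking the sign convention, the closed future side of $P$ (the one met by translating in the direction $(0,0,1)$) is $\{z:\langle z-x,n\rangle_-\le 0\}$. Since $K$ is future convex it lies in this half-space, so $y\in K$ gives
\begin{equation*}
\langle v,n\rangle_-\le 0~.
\end{equation*}
Symmetrically, fix a support plane $Q$ of $K$ at $y$ with future-pointing unit timelike normal $m$; since $x\in K$ lies in the closed future side of $Q$,
\begin{equation*}
\langle x-y,m\rangle_-\le 0,\qquad\text{that is,}\qquad \langle v,m\rangle_-\ge 0~.
\end{equation*}

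To conclude, suppose $v$ is not spacelike. Being nonzero it is then causal, hence $v_3\neq 0$ and $v$ is either future-directed or past-directed. I would invoke the elementary fact --- a consequence of the Cauchy--Schwarz inequality applied to the first two coordinates, in the same circle of ideas as the reversed Cauchy--Schwarz inequality recalled in the introduction --- that the Minkowski product of a future-directed causal vector with a future-directed timelike vector is strictly negative, and that of a past-directed causal vector with a future-directed timelike vector is strictly positive. If $v$ is future-directed this forces $\langle v,m\rangle_-<0$, contradicting $\langle v,m\rangle_-\ge 0$; if $v$ is past-directed it forces $\langle v,n\rangle_->0$, contradicting $\langle v,n\rangle_-\le 0$. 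Hence $v$ is spacelike, which is the assertion. There is essentially no obstacle here; the only points needing care are the sign convention for the future side of a spacelike plane and the inclusion of the lightlike case, which is also why the argument must use support planes at \emph{both} endpoints and not just one.
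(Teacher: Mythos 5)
Your proof is correct, but it takes a genuinely different route from the paper's. The paper intersects $\partial K$ with the plane through the origin, $x$, and $y$, observes that this gives a planar convex curve with a support line parallel to $[x,y]$, then promotes that support line to a support plane of $K$ containing a line parallel to $v=y-x$; if $v$ were causal, that support plane would not be spacelike, a contradiction. You instead work dually, with normals: you take support planes at the \emph{two} endpoints and extract the sign constraints $\langle v,n\rangle_-\le 0$ and $\langle v,m\rangle_-\ge 0$ from the future-convexity of $K$, then observe that a nonzero causal $v$ pairs with any future timelike vector with a strict, orientation-determined sign. Your version avoids a slightly delicate point in the paper's argument --- namely, justifying that a support line of the planar section $K\cap\Pi$ extends to a support plane of $K$ --- at the cost of a sign-convention check; both are one-paragraph proofs. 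One small remark: you attribute the need for both endpoints to "the inclusion of the lightlike case," but the real reason is that the orientation (future vs.\ past) of $v$ is a priori unknown; the strict sign of $\langle v,m\rangle_-$ holds equally in the lightlike and timelike sub-cases once an orientation is fixed, so using one support plane would already fail for a past-directed timelike $v$.
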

\begin{proof}
Assume the contrary, i.e. the segment $[x,y]$ is lightlike or timelike.
Consider the intersection of the surface with the plane  passing through the origin and $x,y$. It is a convex curve,
which has a line parallel to $[x,y]$ as support line, hence there is a support plane of the surface containing this line, and this plane cannot be spacelike if this line is not spacelike, that is a contradiction.
\end{proof}

\section{Proof of Theorem \ref{thm:main 2}}\label{sec 5}

Let us now consider the statement of Theorem~\ref{thm:main 2}.
So let $d$ be a metric with non-positive curvature on the compact surface $S$.
By Corollary~\ref{thm:approx}, there exists a sequence $(d_{n})_{n\in\mathbb{N}}$  of polyhedral metrics with non-positive curvature
on $S$ that converges uniformly to $d$.
By Theorem~\ref{thm:cas poly}, for each $n\in\mathbb{N}$ there
is a Fuchsian convex isometric immersion
$(\phi_n,\rho_n)$, such that $\phi_n(\tilde S)$ is a convex polyhedral surface. Up to compose by global Minkowski isometries, we consider that the sequence of representations is normalized in the sense of Definition~\ref{def norm}.

 Then, if $\phi_n(\tilde S)$ is the graph of the polyhedral Fuchsian H-convex function $(u_n,\rho_n)$, Proposition~\ref{prop finale} applies: there is a subsequence of $\rho_n$ converging to a Fuchsian representation $\rho$, and $\alpha,\beta>0$ such that $\alpha < u_n <\beta$.

So Proposition~\ref{prop:main 3} applies: there is a function $u$ such that the induced distance on the quotient of $d_u$ by $\rho(\pi_1S)$, say $(S,m)$, is the uniform limit of the $(S,d_n)$.
The limit for uniform convergence (actually for the weaker Gromov-Hausdorff convergence) is unique (up to isometries)
 \cite{BBI2001}, hence $(S,m)$ is isometric to $(S,d)$. Theorem~\ref{thm:main 2} is proved.

\begin{spacing}{0.9}
\begin{footnotesize}
\bibliography{minkowski}

\def\cprime{$'$} \def\cprime{$'$}
\begin{thebibliography}{FcLP12}

\bibitem[Ahl06]{ahlfors}
Lars~V. Ahlfors.
\newblock {\em Lectures on quasiconformal mappings}, volume~38 of {\em
  University Lecture Series}.
\newblock American Mathematical Society, Providence, RI, second edition, 2006.
\newblock With supplemental chapters by C. J. Earle, I. Kra, M. Shishikura and
  J. H. Hubbard.

\bibitem[AKPar]{AKP}
S.~{Alexander}, V.~{Kapovitch}, and A.~{Petrunin}.
\newblock {\em Invitation to Alexandrov geometry: CAT[0] spaces}.
\newblock SpringerBriefs in Mathematics, To appear.

\bibitem[Ale06]{alex}
A.~D. Alexandrov.
\newblock {\em A. {D}. {A}lexandrov selected works. {P}art {II}}.
\newblock Chapman \& Hall/CRC, Boca Raton, FL, 2006.
\newblock Intrinsic geometry of convex surfaces, Edited by S. S. Kutateladze,
  Translated from the Russian by S. Vakhrameyev.

\bibitem[AZ67]{AZ}
A.~D. Aleksandrov and V.~A. Zalgaller.
\newblock {\em Intrinsic geometry of surfaces}.
\newblock Translated from the Russian by J. M. Danskin. Translations of
  Mathematical Monographs, Vol. 15. American Mathematical Society, Providence,
  R.I., 1967.

\bibitem[BBI01]{BBI2001}
D.~{Burago}, Yu. {Burago}, and S.~{Ivanov}.
\newblock {\em {A course in metric geometry.}}
\newblock Providence, RI: American Mathematical Society (AMS), 2001.

\bibitem[BBZ11]{BBZ}
T.~Barbot, F.~B{\'e}guin, and A.~Zeghib.
\newblock Prescribing {G}auss curvature of surfaces in 3-dimensional
  spacetimes: application to the {M}inkowski problem in the {M}inkowski space.
\newblock {\em Ann. Inst. Fourier (Grenoble)}, 61(2):511--591, 2011.

\bibitem[BH99]{BH1999}
M.~Bridson and A.~Haefliger.
\newblock {\em Metric spaces of non-positive curvature}, volume 319 of {\em
  Grundlehren der Mathematischen Wissenschaften [Fundamental Principles of
  Mathematical Sciences]}.
\newblock Springer-Verlag, Berlin, 1999.

\bibitem[Bru17]{leo}
L{\'e}o Brunswic.
\newblock {\em Surfaces de {C}auchy poly{\'e}drales des espaces-temps plats
  singuliers}.
\newblock PhD thesis, Universit{\'e} d'{A}vignon et des pays de {V}aucluse,
  2017.

\bibitem[Bur15]{bur}
Annegret~Y. Burtscher.
\newblock Length structures on manifolds with continuous {R}iemannian metrics.
\newblock {\em New York J. Math.}, 21:273--296, 2015.

\bibitem[Bus10]{Bus}
P.~Buser.
\newblock {\em Geometry and spectra of compact {R}iemann surfaces}.
\newblock Modern Birkh\"auser Classics. Birkh\"auser Boston, Inc., Boston, MA,
  2010.
\newblock Reprint of the 1992 edition.

\bibitem[CY15]{chen-yin}
Bing-Long Chen and Le~Yin.
\newblock Isometric embedding of negatively curved complete surfaces in
  {L}orentz-{M}inkowski space.
\newblock {\em Pacific J. Math.}, 276(2):347--367, 2015.

\bibitem[FcLP12]{FLP2012}
Albert {Fathi}, Fran\c cois {Laudenbach}, and Valentin {Po\'enaru}.
\newblock {\em {Thurston's work on surfaces. Transl. from the French by Djun
  Kim and Dan Margalit.}}
\newblock Princeton, NJ: Princeton University Press, 2012.

\bibitem[Fil11]{Fil11}
F.~Fillastre.
\newblock Fuchsian polyhedra in {L}orentzian space-forms.
\newblock {\em Math. Ann.}, 350(2):417--453, 2011.

\bibitem[Fil13]{Fil12}
F.~Fillastre.
\newblock Fuchsian convex bodies: basics of {B}runn-{M}inkowski theory.
\newblock {\em Geom. Funct. Anal.}, 23(1):295--333, 2013.

\bibitem[FIV16]{fiv}
Fran{\c{c}}ois Fillastre, Ivan Izmestiev, and Giona Veronelli.
\newblock Hyperbolization of cusps with convex boundary.
\newblock {\em Manuscripta Math.}, 150(3-4):475--492, 2016.

\bibitem[FM12]{primer}
Benson Farb and Dan Margalit.
\newblock {\em A primer on mapping class groups}, volume~49 of {\em Princeton
  Mathematical Series}.
\newblock Princeton University Press, Princeton, NJ, 2012.

\bibitem[IT92]{IT}
Y.~Imayoshi and M.~Taniguchi.
\newblock {\em An introduction to {T}eichm\"uller spaces}.
\newblock Springer-Verlag, Tokyo, 1992.
\newblock Translated and revised from the Japanese by the authors.

\bibitem[LS00]{LS00}
F.~Labourie and J.-M. Schlenker.
\newblock Surfaces convexes fuchsiennes dans les espaces lorentziens \`a
  courbure constante.
\newblock {\em Math. Ann.}, 316(3):465--483, 2000.

\bibitem[Pap05]{papadop}
Athanase Papadopoulos.
\newblock {\em Metric spaces, convexity and nonpositive curvature}, volume~6 of
  {\em IRMA Lectures in Mathematics and Theoretical Physics}.
\newblock European Mathematical Society (EMS), Z\"urich, 2005.

\bibitem[Rat06]{Rat06}
J.~Ratcliffe.
\newblock {\em Foundations of hyperbolic manifolds}, volume 149 of {\em
  Graduate Texts in Mathematics}.
\newblock Springer, New York, second edition, 2006.

\bibitem[RH93]{RH1993}
Igor Rivin and Craig~D. Hodgson.
\newblock A characterization of compact convex polyhedra in hyperbolic
  {$3$}-space.
\newblock {\em Invent. Math.}, 111(1):77--111, 1993.

\bibitem[Riv86]{Riv86}
I.~Rivin.
\newblock {\em On geometry of convex polyhedra in hyperbolic 3-space}.
\newblock PhD thesis, Princeton University, June 1986.

\bibitem[Roc97]{Roc97}
T.~Rockafellar.
\newblock {\em Convex analysis}.
\newblock Princeton Landmarks in Mathematics. Princeton University Press,
  Princeton, NJ, 1997.
\newblock Reprint of the 1970 original, Princeton Paperbacks.

\bibitem[Sch]{Schlenker2004}
J.-M. Schlenker.
\newblock Hyperbolic manifolds with polyhedral boundary.
\newblock {\em arXiv:math/0111136}.

\bibitem[Sch96]{Sch1996}
Jean-Marc Schlenker.
\newblock Surfaces convexes dans des espaces lorentziens \`a courbure
  constante.
\newblock {\em Comm. Anal. Geom.}, 4(1-2):285--331, 1996.

\bibitem[Sch14]{schneider}
Rolf Schneider.
\newblock {\em Convex bodies: the {B}runn-{M}inkowski theory}, volume 151 of
  {\em Encyclopedia of Mathematics and its Applications}.
\newblock Cambridge University Press, Cambridge, expanded edition, 2014.

\bibitem[Sok77]{sok77}
D.~D. Sokolov.
\newblock The regularity of convex surfaces with definite metric in
  three-dimensional pseudo-{E}uclidean space.
\newblock In {\em Problems in geometry, {V}ol. 8 ({R}ussian)}, pages 257--277,
  280. Akad. Nauk SSSR Vsesojuz. Inst. Nau\v cn. i Tehn. Informacii, Moscow,
  1977.

\bibitem[Tro86]{troyanov}
Marc Troyanov.
\newblock Les surfaces euclidiennes \`a singularit\'es coniques.
\newblock {\em Enseign. Math. (2)}, 32(1-2):79--94, 1986.

\end{thebibliography}
\bibliographystyle{alpha}
\end{footnotesize}
\end{spacing}

\end{document}